\numberwithin{equation}{section}
\theoremstyle{plain}
\newtheorem{theorem}[equation]{Theorem}
\newtheorem{lem}[equation]{Lemma}
\newtheorem{lemma}[equation]{Lemma}
\newtheorem{proposition}[equation]{Proposition}
\theoremstyle{definition}
\newtheorem{rem}[equation]{Remark}
\newtheorem{remark}[equation]{Remark}
\newtheorem{nonsec}[equation]{}
\newcounter{alphabet}
\newcommand{\be}{\begin{eqnarray}}
\newcommand{\ee}{\end{eqnarray}}
\newcommand{\ba}{\begin{array}}
\newcommand{\ea}{\end{array}}
\newcommand{\ben}{\begin{eqnarray*}}
\newcommand{\een}{\end{eqnarray*}}
\newcommand{\C}{{\mathbb C}}
\newcommand{\R}{{\mathbb R}}
\newcommand{\B}{\mathbb{B}}
\newcommand{\capa}{\mathrm{cap}\,}
\renewcommand{\th}{\,\textnormal{th}}
\newcommand{{\tth}}{\mathrm{th}}
\newcommand{{\sh}}{\mathrm{sh}}
\newcommand{{\ch}}{\mathrm{ch}}
\newcommand{\HH}{{\mathbb H}^2}
\newcommand{\K}{\mathcal{K}}
\newcommand{\arsh}{\,\mathrm{arsh}\,}
\newcommand{\arth}{\,\mathrm{arth}\,}
\newcommand{\tnh}{\mathrm{th}}
\newcommand{\sn}{\mathrm{sn}}
\renewcommand{\Im}{{\,\operatorname{Im}\,}}
\newcommand {\mM} {\mathsf{M}}
\newcommand {\M} {\mathsf{M}}
\newcommand{\QM}{\mathop{\mathrm{QM}}}
\newcommand{\QMt}{\mathop{\mathrm{QMt}}}
\renewcommand{\i}{\mathrm{i}}
\newcommand{\bs}{{\bf s}}
\font\fFt=eusm10 
\font\fFa=eusm7  
\font\fFp=eusm5  
\def\K{\mathchoice
{\hbox{\,\fFt K}}
{\hbox{\,\fFt K}}
{\hbox{\,\fFa K}}
{\hbox{\,\fFp K}}}
\newcounter{minutes}\setcounter{minutes}{\time}
\newcounter{hours}\setcounter{hours}{\time}
\begin{document}

\bibliographystyle{amsplain}
\title
{
Condenser capacity and hyperbolic perimeter
}

\def\thefootnote{}
\footnotetext{
\texttt{\tiny File:~\jobname .tex,
          printed: \number\year-\number\month-\number\day,
          \thehours.\ifnum\theminutes<10{0}\fi\theminutes}
}
\makeatletter\def\thefootnote{\@arabic\c@footnote}\makeatother

\author[M.M.S. Nasser]{Mohamed M. S. Nasser}
\address{Department of Mathematics, Statistics and Physics, Qatar University, P.O.Box 2713, Doha, Qatar}
\email{mms.nasser@qu.edu.qa}
\author[O. Rainio]{Oona Rainio}
\address{Department of Mathematics and Statistics, University of Turku, FI-20014 Turku, Finland}
\email{ormrai@utu.fi}
\author[M. Vuorinen]{Matti Vuorinen}
\address{Department of Mathematics and Statistics, University of Turku, FI-20014 Turku, Finland}
\email{vuorinen@utu.fi}

\keywords{Condenser capacity, hyperbolic perimeter, fast multipole method, boundary integral equation, numerical conformal mapping}
\subjclass[2010]{Primary 30C85, 31A15; Secondary 65E10}
\begin{abstract}
We study the conformal capacity by using novel computational algorithms based on implementations of the fast multipole method, and analytic techniques. Especially, we apply domain functionals to study the capacities of condensers $(G,E)$ where $G$ is a simply connected domain in the complex plane and $E$ is a compact subset of $G$. Due to conformal invariance, our main tools are the hyperbolic geometry and functionals such as the hyperbolic perimeter of $E$. Our computational experiments demonstrate, for instance, sharpness of established inequalities. In the case of model problems with known analytic solutions, very high precision of computation is observed.
\end{abstract}
\maketitle

\section{Introduction}

A \emph{condenser} is a pair $(G,E)$, where $G\subset\R^n$ is a domain and $E$ is a compact non-empty subset of $G$. The \emph{ conformal capacity} of this condenser is defined as  \cite{GMP,HKV,hkm}
\begin{align}\label{def_condensercap}
{\rm cap}(G,E)=\inf_{u\in A}\int_{G}|\nabla u|^n dm,
\end{align}
where $A$ is the class of $C^\infty_0(G)$ functions $u: G\to[0,\infty)$ with $u(x) \ge 1$ for all $x \in E$ and $dm$ is the $n$-dimensional Lebesgue measure. The conformal capacity of a condenser is one of the key notions in potential theory of elliptic partial differential equations \cite{hkm, m} and it has numerous applications to geometric function theory, both in the plane and in higher dimensions, \cite{du, GMP, GoRe, HKV}. 

The {\it isoperimetric problem} is to determine a plane figure of the largest possible area whose boundary has a specified length, or, perimeter. Constrained extremal problems
of this type, where the constraints involve geometric or physical quantities,  have been studied in several thousands of papers, see e.g. \cite[p. 151]{bae}, \cite{bhs,bds,fu, m03}.
Motivated by the fact that many  \emph{domain functionals} of mathematical physics such as capacity, moment of inertia, principal frequency, or torsional rigidity have analytic formulas only in rare exceptional
cases,  P\'olya and Szeg\H o developed \cite{ps} a systematic theory to prove upper and 
lower bounds for these functionals in terms of simpler domain functionals such as area, perimeter, inradius, and circumradius. In addition to these domain functionals, they used  the method of \emph{symmetrization} 
as a method to transform a condenser $(G,E)$ onto another, symmetrized condenser 
$(G^*,E^*)\,.$ The key fact here is that the integral in \eqref{def_condensercap} decreases
under symmetrization  \cite[p. 96]{bae} and hence we will have a lower bound
\begin{align}\label{ine_symmetrization}
\capa(G,E)\geq\capa(G^*,E^*).
\end{align}
There are several variants of the symmetrization method and  depending on which  one is applied, the sets $G^*$ and $E^*$ exhibit various symmetry properties with respect to spheres or hyperplanes \cite[p. 253]{bae}. Due to this symmetry, the capacity of the symmetrized condenser is often easier to estimate than the original one.

Note that, while the lower bound of \eqref{ine_symmetrization} is clearly sharp if $(G,E)=(G^*,E^*)$, in most cases the symmetrization method only yields crude estimates. 
The {domain functionals} of  P\'olya and Szeg\H o~\cite{ps}, such as volume, area, perimeter, inradius, and circumradius, expressed in terms of Euclidean geometry, have numerous applications and they behave well
under symmetrization, but they
do not seem to be natural in the study of conformal capacity. The reason is that Euclidean
geometry does not reflect optimally the conformal invariance of the conformal capacity.
This observation led us to  use hyperbolic geometry, which is available in the planar case $n=2$, when the domain $G$ is the unit disk $\B^2$ or, more generally by Riemann's mapping theorem, a simply connected plane domain. For dimensions $n \ge 3$, Liouville's theorem
states that conformal mappings are M\"obius transformations and hence
Riemann's theorem does not apply. For generalized versions of Liouville's theorem, see  Yu.G. Reshetnyak \cite[Thm 5.10, p. 171, Thm 12.4, p. 251]{res} and F.W. Gehring et al. \cite[Thm 6.8.4, p. 336]{GMP}.

Many authors have  proved upper and lower bounds for several kinds of capacities, including the conformal capacity that we are focusing here on, see for instance V. Maz\'{}ya \cite{m}. In spite of all this work, there does not seem to exist bounds in the form
\begin{align}
L\leq\capa(G,E)\leq U    
\end{align}
with a quantitative upper bound for the deviation $U-L$, even in the simplest case $G=\B^2$. In particular, there is no quantitative two-sided variant for the symmetrization inequality \eqref{ine_symmetrization}. Here, a fundamental difficulty is that the value of $\capa(G,E)$ is unknown.
For the isoperimetric inequality, quantitative variants have been proved by N. Fusco \cite{fu} and, in the framework of the hyperbolic geometry,  by  V. B\"ogelein, F. Duzaar, Ch. Scheven \cite{bds}. Inequalities for the $p$-capacity were proved very recently by J. Xiao \cite{x}
and E. Mukoseeva \cite{muk}. In a series of papers~\cite{BSV,BBGGHV,HNV,HRV, HRV3}, the third author with several coauthors has studied numerical computation of condenser capacities using the finite element method.

Despite the extensive literature dealing with condenser capacity~\cite{GoRe,HKV,m}, the actual values of $\capa(G,E)$ have remained rather elusive quantities. This is largely due to the unavailability of computational tools that can be used for wide ranges of domains. In fact, we have not seen a systematic compilation of concrete bounds for the capacity published since the pioneering work of P\'olya and Szeg\H o \cite{ps}.

In this paper, our goal is to combine analytic methods with efficient numerical techniques and with extensive experiments
to demonstrate the precision of the methods and the behavior of the numerical algorithms. To find new upper and lower bounds for the condenser capacity, we  introduce {\it new kinds of domain functionals} expressed in terms of hyperbolic geometry of the unit disk $\B^2\,.$ 
The numerical methods of these experiments are based on  {the boundary integral method developed by the first author and his coauthors in } a series of recent papers \cite{LSN17,
Nas-ETNA,Nas-cmft15,nv1,nvs,nv}.

The first question we study is whether the symmetrization lower bound \eqref{ine_symmetrization} for $\capa(\B^2,E)$, where
the interior of $E$ is a simply connected domain with a piecewise smooth boundary $\partial E$, could be improved by using the hyperbolic perimeter of $E$. We will give  examples to demonstrate that this is not true in general. For convex sets $E$, we have a positive result.
Our experiments led us to an experimental verification of the next two theorems. We learned
afterwards that both results are well-known.

\begin{theorem}  {\rm (R. K\"uhnau \cite[p. 99, Thm 9.8]{ku})}\label{KuhnauThm}
If $E\subset\B^2$ is a convex polygon, then
\begin{align*}
\capa(\B^2,E)\geq\capa(\B^2,I),
\end{align*}
where $I=[0,r]$ is a segment with the same hyperbolic perimeter as $\partial E$.
\end{theorem}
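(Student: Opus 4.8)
The plan is to reduce the statement to a one-parameter comparison via hyperbolic polygonal approximation and a symmetrization step adapted to the hyperbolic metric. First I would use conformal invariance to place everything inside $\B^2$ with its hyperbolic metric; since $\capa$ is a conformal invariant and hyperbolic perimeter is a hyperbolic (hence conformal, up to the normalization of $\B^2$) invariant, it suffices to prove the inequality for one convenient representative in each conformal class. I would then invoke a hyperbolic symmetrization (circular or Steiner-type symmetrization with respect to a hyperbolic geodesic through the origin) for the condenser $(\B^2,E)$: by the general monotonicity principle behind \eqref{ine_symmetrization}, such a symmetrization does not increase $\capa(\B^2,E)$, and the key geometric input is that it does not increase the hyperbolic perimeter of $\partial E$ when $E$ is convex (this is the hyperbolic isoperimetric-type behavior of perimeter under symmetrization). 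The symmetrized set $E^*$ is then a hyperbolically symmetric convex region, and collapsing it (letting one of the symmetrization parameters degenerate while keeping hyperbolic perimeter fixed, or directly applying a ``thin'' limiting argument) yields a hyperbolic segment $I=[0,r]$; monotonicity of capacity under the limit gives $\capa(\B^2,E^*)\ge \capa(\B^2,I)$ with $I$ having hyperbolic perimeter at most that of $\partial E$.

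The second ingredient is the explicit monotonicity of $\capa(\B^2,[0,r])$ in $r$: since $[0,r]\subset[0,r']$ for $r\le r'$ and capacity is monotone under inclusion of the compact plate, and since the hyperbolic perimeter of a segment $[0,r]$ is a strictly increasing function of $r$ (it equals twice the hyperbolic length $2\arth r$, or the appropriate perimeter-of-a-degenerate-ellipse expression), one gets that if the hyperbolic perimeter of $I$ is $\le$ that of $\partial E$ and we chose the comparison segment to have exactly the same hyperbolic perimeter as $\partial E$, then its $r$ is at least as large, hence its capacity is at most the capacity obtained from the symmetrization chain. Chaining the inequalities $\capa(\B^2,E)\ge \capa(\B^2,E^*)\ge \capa(\B^2,I)$ closes the argument.

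For the polygonal case specifically, I would exploit that a convex polygon is a finite intersection of hyperbolic half-planes, so one can symmetrize edge by edge (a finite sequence of hyperbolic Steiner symmetrizations in geodesic directions), at each step not increasing capacity and, by convexity, not increasing hyperbolic perimeter, until reaching a set symmetric under the full dihedral-type stabilizer, and then pass to the segment. Alternatively, if a direct hyperbolic symmetrization lemma of the required strength is not readily available in the form needed, one can appeal to Kühnau's own extremal-length / modulus characterization: express $\capa(\B^2,E)$ as a reciprocal modulus of the ring $\B^2\setminus E$ and use that among all convex $E$ with prescribed hyperbolic boundary length the geodesic segment minimizes this modulus, which is the variational content of the statement.

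\textbf{Main obstacle.} The crux is the geometric claim that hyperbolic perimeter does not increase under the chosen hyperbolic symmetrization for convex sets --- the Euclidean analogue (perimeter non-increasing under Steiner symmetrization of convex sets) is classical, but transporting it to the hyperbolic metric requires care, since hyperbolic Steiner symmetrization along geodesics is the natural operation yet its effect on the hyperbolic length of the boundary is not as transparent as in the Euclidean case. I expect this step, together with verifying that the symmetrization genuinely produces (in the limit) the segment $[0,r]$ with the right perimeter bookkeeping, to be where the real work lies; the capacity monotonicity statements are standard. A clean alternative that sidesteps the hyperbolic symmetrization is to reduce, via a conformal map straightening one side, to a Euclidean half-plane computation, but then one must track how hyperbolic perimeter transforms, which reintroduces essentially the same difficulty.
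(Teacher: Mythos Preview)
The paper does not give its own proof of this theorem; it is quoted as a known result of K\"uhnau with a reference to \cite[p.~99, Thm~9.8]{ku}, so there is no argument in the paper to compare against. That said, your proposal has a genuine gap in the direction of the inequalities. Your symmetrization step gives $\capa(\B^2,E)\ge\capa(\B^2,E^*)$ together with $\mathrm{h\text{-}perim}(E^*)\le\mathrm{h\text{-}perim}(E)$, and your collapsing step produces a segment $I'$ with $\capa(\B^2,E^*)\ge\capa(\B^2,I')$ and $\mathrm{h\text{-}perim}(I')\le\mathrm{h\text{-}perim}(E)$. But the comparison segment $I$ in the theorem has the \emph{same} hyperbolic perimeter as $E$, hence $I'\subset I$ and $\capa(\B^2,I')\le\capa(\B^2,I)$. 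The chain $\capa(\B^2,E)\ge\capa(\B^2,I')\le\capa(\B^2,I)$ does not yield $\capa(\B^2,E)\ge\capa(\B^2,I)$; your second paragraph asserts precisely the reversed monotonicity. If instead you try to ``collapse while keeping hyperbolic perimeter fixed,'' you must explain why capacity decreases along that deformation, and that is essentially the statement to be proved (in the symmetric case).

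There is also a structural reason the scheme cannot work as written: Steiner-type symmetrizations, Euclidean or hyperbolic, drive convex bodies toward the \emph{disk}, which by Theorem~\ref{fwg71} is the \emph{maximizer} of capacity for given hyperbolic perimeter. The segment is the opposite extremal, so an argument that repeatedly applies a capacity-decreasing, perimeter-nonincreasing symmetrization and expects to land on a segment is aiming at the wrong limit object. A viable route has to use an operation that is adapted to the segment as extremizer---for instance an unfolding or reflection argument for the convex polygon (developing $\B^2\setminus E$ along the sides of $E$), or an extremal-length comparison in which the hyperbolic perimeter appears directly as the length of the inner boundary after straightening---rather than a rounding symmetrization.
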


Note that a spherical symmetrization argument, see Lemma \ref{lem_capgamma}(1) below, shows that
\[
\capa(\B^2,I)\geq \capa(\B^2,[0,s]),
\]
where the segment $[0,s] \subset \B^2$ has the same {\it  hyperbolic diameter} as the set $E\,$
and therefore $s\leq r$ and hence Theorem \ref{KuhnauThm} gives a better lower bound than the
symmetrization method.
 
\begin{theorem} {\rm (F.W. Gehring  \cite[Corollary 6]{g})} \label{fwg71}
\ If $E\subset\B^2$ is a simply connected domain, then
\begin{align*}
\capa(\B^2,E)\leq \capa(\B^2,F),
\end{align*}
where $F$ is a disk with {hyperbolic} perimeter equal to that one of $E\,.$
\end{theorem}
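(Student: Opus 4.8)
\textbf{Proof proposal for Theorem \ref{fwg71}.}

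The plan is to deduce this from Gehring's general symmetrization-type inequality for the conformal capacity in the unit disk, exploiting conformal invariance to normalize the configuration. The starting point is the observation that $\capa(\B^2,E)$ depends only on the conformal equivalence class of the pair $(\B^2,E)$: if $\varphi$ is a conformal automorphism of $\B^2$, then $\capa(\B^2,E)=\capa(\B^2,\varphi(E))$, and hyperbolic perimeter is likewise a conformal invariant, so the quantity $L$ equal to the hyperbolic perimeter of $\partial E$ is the only geometric datum that may enter the bound. Since $E$ is a simply connected domain in $\B^2$, one may apply a Möbius transformation so that the hyperbolic center of $E$ (or any convenient interior point) is moved to the origin; this does not change either side of the claimed inequality.

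First I would invoke Gehring's circular symmetrization result for the conformal capacity in $\B^2$: among all condensers $(\B^2,E)$ with $E$ a simply connected domain of prescribed hyperbolic perimeter $L$, the symmetrized condenser maximizes the capacity, and when $E$ is symmetrized with respect to a hyperbolic geodesic through the origin the extremal symmetric domain of perimeter $L$ is precisely the hyperbolic disk $F$ of that perimeter. Concretely, this follows the same template as the inequality \eqref{ine_symmetrization}, except that Gehring works with the dual (conjugate) extremal problem: the capacity of $(\B^2,E)$ equals the reciprocal of the extremal length (modulus) of the family of curves in $\B^2\setminus E$ separating $E$ from $\partial\B^2$, and circular symmetrization \emph{increases} this modulus, hence decreases the reciprocal in a way that goes the correct direction to yield an \emph{upper} bound on $\capa(\B^2,E)$. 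The key monotonicity under symmetrization is the content of \cite[Corollary 6]{g}; one should cite it and check that the hypotheses (simple connectivity of $E$, rectifiability of $\partial E$ so that the hyperbolic perimeter is finite and well-defined) are exactly those assumed here.

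Next I would identify the symmetric extremal set. Once the condenser is symmetrized, the outer domain $\B^2$ and the inner set are both symmetric about a diameter; the extremal inner set of given hyperbolic perimeter is then forced by the isoperimetric inequality in the hyperbolic plane to be a hyperbolic disk — this is where the hyperbolic isoperimetric inequality (in the sharp form, with equality only for disks) enters. Combining: $\capa(\B^2,E)\le \capa(\B^2,E^{*})\le \capa(\B^2,F)$, where $E^{*}$ is the circular symmetrization of $E$ and $F$ is the hyperbolic disk of the same hyperbolic perimeter, and the second inequality uses monotonicity of the capacity $\capa(\B^2,\cdot)$ under hyperbolic-inclusion together with the fact that $E^{*}$ is contained in a concentric hyperbolic disk whose perimeter does not exceed $L$ by the isoperimetric inequality. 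A small subtlety: one must match perimeters rather than areas or diameters, so the comparison disk $F$ should be the one with perimeter exactly $L$, and one checks $E^{*}\subset F$ up to a hyperbolic rotation — or, more cleanly, one applies Gehring's statement directly with the symmetrized target already being the disk, avoiding the intermediate step.

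The main obstacle I anticipate is purely bookkeeping: verifying that Gehring's Corollary 6 is stated (or easily reduces to) the Euclidean-disk / hyperbolic-perimeter formulation used here, since \cite{g} is phrased for the spherical or Euclidean metric and general ring domains, and one must translate via the conformal map $\B^2\to\B^2$ and the explicit relation between Euclidean and hyperbolic arc length $ds_{h}=2|dz|/(1-|z|^{2})$ to see that ``circular symmetrization preserving hyperbolic perimeter'' is the relevant operation and that the extremal configuration is the hyperbolic disk. No new analytic estimate is needed beyond what is already in the cited literature; the proof is essentially an application of a known symmetrization theorem combined with the hyperbolic isoperimetric inequality, so the write-up should be short.
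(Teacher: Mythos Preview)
Your proposal has a genuine directional error that cannot be repaired within the symmetrization framework you describe. You correctly note that circular symmetrization increases the modulus of the separating family, equivalently \emph{decreases} the capacity: $\capa(\B^2,E^{*})\le\capa(\B^2,E)$. This is exactly inequality \eqref{ine_symmetrization} of the paper, and it produces a \emph{lower} bound for $\capa(\B^2,E)$, not the upper bound required by Theorem~\ref{fwg71}. Your ``Combining'' line $\capa(\B^2,E)\le\capa(\B^2,E^{*})$ is therefore the wrong way round and contradicts what you wrote two sentences earlier. Symmetrizing the compact set $E$ cannot yield the desired inequality; it goes in the opposite direction.

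The second step is also unjustified: the hyperbolic isoperimetric inequality compares area and perimeter, it does not give an inclusion $E^{*}\subset F$. A set of perimeter $L$ need not lie in any hyperbolic disk of perimeter $\le L$ (think of a thin neighbourhood of a long geodesic segment).

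The paper does not attempt to reprove the inequality at all: it simply invokes Gehring's Corollary~6 from \cite{g}, which already gives the explicit upper bound $\capa(\B^2,E)\le 2\pi/\log\bigl((2\pi+\sqrt{P^2+4\pi^2})/P\bigr)$ in terms of the hyperbolic perimeter $P$, and then uses Proposition~\ref{capPerim} to recognise that expression as $\capa(\B^2,F)$ for the disk $F$ of perimeter $P$; see Remark~\ref{fwgRmk}. Gehring's own argument is not a symmetrization of $E$ but a direct extremal-length estimate: one analyses the level curves of the equilibrium potential (each of which separates $E$ from $\partial\B^2$ and bounds a simply connected region), applies the hyperbolic isoperimetric inequality to those level curves, and integrates via the coarea formula to bound the Dirichlet integral from above. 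If you want to write out a proof rather than cite \cite{g}, that is the route to follow.
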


For some observations about Theorem \ref{fwg71} see Remark \ref{fwgRmk}.

The remaining part of this paper is organized as follows:
Sections~\ref{sec:pre} and~\ref{sec:cap} present preliminary materials about hyperbolic geometry, quadrilateral and its modulus, and conformal capacity, which will be used in the following sections.
Analytical results for simple condensers are presented in Section~\ref{sec:comp} and numerical methods for computation of the capacity of condensers and the modulus of quadrilaterals are presented in Section~\ref{sec:num}. In these two sections, we  compare various lower bounds for the capacity to the symmetrization inequality~\eqref{ine_symmetrization}. We also give a lower bound for $\capa(G,E)$ in the case 
when $G \setminus E$ is a polygonal ring domain. This lower bound is sharp in the case when the polygonal ring domain has certain regularity properties. The results of this computational experiment are presented in the form of tables and graphics.
In the final section of this paper, Section~\ref{sec:epi}, we point out that finding connections between the geometric domain functional $d(E)/d(E,\partial G)$ and $\capa(G,E)$ seems to offer problems for further investigations.\\

\textbf{Acknowledgements.} The second author was financially supported by the University of Turku Graduate School UTUGS. We are indebted to D. Betsakos and R. K\"uhnau for informing us about the literature \cite[Corollary 6]{g} and \cite[p. 99, Thm 9.8]{ku}, resp., related to Theorems \ref{fwg71} and \ref{KuhnauThm}. We are also thankful to the referees for their useful and constructive comments.

\section{Preliminaries}\label{sec:pre}

Consider first the notations for the Euclidean metric. Let $d_G(x)$ be the Euclidean distance between a point $x$ in a domain $G$ and the boundary $\partial G$. Denote the Euclidean diameter of a nonempty set $F$ by $d(F)$ and the Euclidean distance between two non-empty sets $E,F$ by $d(E,F)$. Denote the Euclidean open ball with a center $x\in\R^n$ and a radius $r>0$ by $B^n(x,r)$, the corresponding closed ball by $\overline{B}^n(x,r)$ and its boundary sphere by $S^{n-1}(x,r)$. Suppose that $x=0$ and $r=1$ here, if they are not otherwise specified. 

\begin{nonsec} {\bf Hyperbolic geometry.}
Define now the hyperbolic metric in the Poincar\'e unit ball $\B^n$ as in  \cite{be}, \cite[(2.8) p. 15]{BM}
\begin{align}\label{myrho}
\text{sh}^2\frac{\rho_{\B^n}(x,y)}{2}&=\frac{|x-y|^2}{(1-|x|^2)(1-|y|^2)},\quad x,y\in\B^n.
\end{align}
Here and below, sh, ch and th stand for the hyperbolic sine, cosine and tangent functions, respectively. Let $J[x,y]$ be the hyperbolic segment between the points $x,y$ and $[x,y]$ its Euclidean counterpart. Note that, for any simply connected domain $G\subsetneq\R^2$, one can choose a conformal map $f\,:\,G\to \B^2=f(G)$ by means of Riemann's mapping theorem and thus define the hyperbolic metric $\rho_G$ in $G$ by \cite{BM}
\begin{equation}\label{eq:rhoG}
\rho_G(x,y) = \rho_{\B^2}(f(x),f(y)),\quad x,y\in G.
\end{equation}
\end{nonsec}

\begin{nonsec} {\bf Hyperbolic disks.} We use the notation
\[ B_\rho(x,M) = \{z \in \mathbb{B}^2: \rho_{\mathbb{B}^2}(x,z)<M\}\]
for the hyperbolic disk centered at $x\in \mathbb{B}^2$ with radius $M>0\,.$
It is a basic fact that they are Euclidean disks with the center and radius given
by \cite[p. 56, (4.20)]{HKV}
\begin{equation}\label{hypdisk}
 \begin{cases}
        B_\rho(x,M)=B^2(y,r)\;,&\\
  \noalign{\vskip5pt}
      {\displaystyle y=\frac{x(1-t^2)}{1-|x|^2t^2}\;,\;\;
        r=\frac{(1-|x|^2)t}{1-|x|^2t^2}\;,\;\;t={\th}\, ( M/2)\;.}&
\end{cases}
\end{equation}
Note the special case $x= 0$,
\begin{equation} \label{hypDat0}
 B_\rho(0,M)=B^2(0,{\th}\, ( M/2)) \,.
\end{equation}
\end{nonsec} 

\begin{lemma}\cite[Thm 7.2.2, p. 132]{be} \label{perimBe}
The area of a hyperbolic disc of radius $r$ is $4\pi{\rm sh}^2(r\slash2)$ and  the length (or the perimeter) of a hyperbolic circle of radius $r$ is $2\pi{\rm sh}(r)$.
\end{lemma}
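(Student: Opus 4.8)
The plan is to reduce to a disc centred at the origin and then integrate the density of the hyperbolic metric in polar coordinates. Since the M\"obius automorphisms of $\B^2$ act transitively on $\B^2$ and preserve $\rho_{\B^2}$ (this is how \eqref{myrho}, resp.\ \eqref{eq:rhoG}, is set up), they are hyperbolic isometries, so both the hyperbolic area and the hyperbolic perimeter are invariant under them. Hence it is enough to treat $B_\rho(0,r)$, which by \eqref{hypDat0} is the Euclidean disc $B^2(0,t)$ with $t=\tnh(r/2)$, its boundary being the circle $\{|z|=t\}$.

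Next I would record the line and area elements. The density of $\rho_{\B^2}$ is $\lambda_{\B^2}(z)=2/(1-|z|^2)$, which can be read off from \eqref{myrho}, so the hyperbolic length element is $ds=2|dz|/(1-|z|^2)$ and the hyperbolic area element is $dA=4\,dm/(1-|z|^2)^2$, where $dm$ is Lebesgue area.

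Now the two computations are elementary. For the perimeter, parametrising $\{|z|=t\}$ by $z=te^{\i\theta}$ gives
\[
  \int_0^{2\pi}\frac{2t}{1-t^2}\,d\theta=\frac{4\pi t}{1-t^2},
\]
and the identity $2\tnh(r/2)/(1-\tnh^2(r/2))=\sh r$ turns this into $2\pi\,\sh r$. For the area, polar coordinates $z=\varrho e^{\i\theta}$ give
\[
  \int_0^{2\pi}\!\!\int_0^{t}\frac{4\varrho}{(1-\varrho^2)^2}\,d\varrho\,d\theta
  =2\pi\Bigl(\frac{2}{1-t^2}-2\Bigr)=\frac{4\pi t^2}{1-t^2},
\]
and $\tnh^2(r/2)/(1-\tnh^2(r/2))=\sh^2(r/2)$ turns this into $4\pi\,\sh^2(r/2)$. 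As a consistency check, differentiating the area formula in $r$ reproduces the perimeter formula, since $\frac{d}{dr}\bigl(4\pi\,\sh^2(r/2)\bigr)=2\pi\,\sh r$.

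I do not expect a genuine obstacle here: the only step that needs a word of justification is the reduction to the origin, i.e.\ that a hyperbolic disc about an arbitrary centre is an isometric copy of one about $0$ and hence has the same perimeter and area; after that it is just calculus together with the standard identities for $\sh$, $\ch$, $\tnh$.
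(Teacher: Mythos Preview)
Your argument is correct: reducing to the origin via a M\"obius isometry and then integrating the density $2/(1-|z|^2)$ (for length) and $4/(1-|z|^2)^2$ (for area) in polar coordinates over the Euclidean disc $B^2(0,\tnh(r/2))$ yields exactly the stated formulas, and your hyperbolic identities are right. Note, however, that the paper does not supply its own proof of this lemma; it is simply quoted from Beardon \cite[Thm 7.2.2, p.~132]{be}, so there is no in-paper argument to compare against---your computation is the standard one and would match what is found in that reference.
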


\begin{nonsec}{\bf Quasihyperbolic metric.} For a domain $G\subsetneq\R^n$, define the weight function as in \cite[(5.1), p. 68]{HKV}
\begin{align*}
w:G\to (0,\infty),\quad w(x)=1\slash d_G(x),\quad x\in G.    
\end{align*}
By \cite[(5.2), p. 68]{HKV}, the \emph{quasihyperbolic distance} between $x,y\in G$ is now
\begin{align*}
k_G(x,y)=\inf_{\alpha\in\Gamma_{xy}}\int_\alpha w(x)|dx|,   
\end{align*}
where $\Gamma_{xy}$ is the family of all rectifiable curves in $G$ joining $x$ and $y$. Note that if
$G$ is a simply connected domain in the plane, then the quasihyperbolic metric fulfills the inequality
\begin{equation} \label{kvsrho}
\rho_{G}(x,y) \leq  2k_{G}(x,y)\leq  4\rho_{G}(x,y)     
\end{equation}
for all points $x,y\in G$ \cite[p.~21, (4.15)]{garmar}. 
\end{nonsec} 

\medskip

The next lemma is based on a standard covering lemma. Note that here the connectedness
of the set $F$ is essential as shown in \cite{vu81}. 

\begin{lemma}\cite[Lemma 2.17, p. 347 \& Cor. 2.18, p. 348]{vu81} \label{vu81}
For $n\geq2$, there exists a constant $c(n)>0$ such that, for a domain $G\subset\R^n,$ every continuum $F\subset G$ can be covered by a family of balls 
$$ \{  B^n(z_j, \lambda d_G(z_j): j =1,\dots,m\} , \quad m\le c(n)\left(2+\frac{d(F)}{\lambda d(F,\partial G)}\right)^n\,.$$
In particular, $ k_G(B^n(z_j, \lambda d_G(z_j)))\leq 2 \log(1/(1-\lambda))$ for all $j$ and 
$k_G(F)\leq  2 m \log(1/(1-\lambda))\,.$
\end{lemma}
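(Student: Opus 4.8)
The plan is to prove this by a greedy covering argument combined with the elementary bound on the quasihyperbolic diameter of a single ball. Write $\delta:=d(F,\partial G)$; since the continuum $F$ is compact, $d_G(x)\ge\delta>0$ for every $x\in F$ and $d(F)<\infty$, and throughout I take $\lambda\in(0,1)$.

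First I would record the single-ball estimate: if $z\in G$ and $|x-z|\le\lambda\,d_G(z)$, then integrating the density $1/d_G$ along the Euclidean segment $[z,x]$ and using $d_G(z+t(x-z))\ge d_G(z)-t|x-z|$ gives $k_G(z,x)\le\log\bigl(1/(1-|x-z|/d_G(z))\bigr)\le\log(1/(1-\lambda))$, and hence, by the triangle inequality, $k_G(x,y)\le 2\log(1/(1-\lambda))$ whenever $x,y\in B^n(z,\lambda\,d_G(z))$. This is already the first ``in particular'' assertion, and it will be the building block for the last one.

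Next, to construct the cover, I would pick $z_1,\dots,z_m\in F$ forming a maximal $(\lambda\delta/2)$-separated subset of $F$, which is finite because $F$ is bounded. Maximality forces every $x\in F$ to lie within $\lambda\delta/2<\lambda\delta\le\lambda\,d_G(z_j)$ of some $z_j$, so the balls $B^n(z_j,\lambda\,d_G(z_j))$ cover $F$. For the cardinality, the balls $B^n(z_j,\lambda\delta/4)$ are pairwise disjoint and all contained in $B^n(z_1,d(F)+\lambda\delta/4)$, so comparing Lebesgue measures gives $m\le\bigl(1+4\,d(F)/(\lambda\delta)\bigr)^n\le 4^n\bigl(2+d(F)/(\lambda\,d(F,\partial G))\bigr)^n$, so $c(n)=4^n$ works.

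Finally, for the estimate $k_G(F)\le 2m\log(1/(1-\lambda))$ I would invoke the connectedness of $F$, and this is the one step I expect to require care — as the remark following the statement notes via \cite{vu81}, it is exactly where the hypothesis that $F$ is a continuum cannot be relaxed. The idea is that, since the connected set $F$ is covered by the finitely many open balls $B_1,\dots,B_m$ (each meeting $F$, as it contains its center $z_j$), any two of these balls are joined by a chain $C_0,\dots,C_k$ with $C_{i-1}\cap C_i\cap F\ne\emptyset$ and $k+1\le m$; indeed, ``being joined by such a chain'' partitions $F$ into relatively open-and-closed pieces, and $F$ connected forces there to be only one. Given $x,y\in F$, take a chain joining the ball containing $x$ to the ball containing $y$, a point of $F$ in each successive intersection, and then $x$ and $y$ at the two ends; consecutive points of the resulting sequence of at most $m+1$ points always lie in a common ball, so applying the single-ball estimate to each of the at most $m$ consecutive pairs and summing yields $k_G(x,y)\le 2m\log(1/(1-\lambda))$. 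The remaining ingredients — existence of the maximal separated set, the segment integral, and the volume bookkeeping — are routine; the chain-connectedness step is the only genuine obstacle.
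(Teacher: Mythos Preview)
The paper does not give its own proof of this lemma; it is quoted verbatim from \cite[Lemma~2.17 and Cor.~2.18]{vu81} and used as a black box. So there is nothing in the present paper to compare your argument against.

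That said, your proof is correct and is essentially the argument one finds in the original source: the single-ball estimate via integrating $1/d_G$ along a segment, a maximal-separated-set (Vitali/packing) construction for the cover with a volume comparison to bound $m$, and a chain argument exploiting connectedness of $F$ to pass from the per-ball bound to the global $k_G$-diameter bound. Each step is carried out cleanly; in particular, your verification that $1+4t\le 4(2+t)$ gives the explicit constant $c(n)=4^n$, and your handling of the chain (with $k+1\le m$ balls yielding at most $m$ consecutive pairs) is accurate. The only implicit assumption you should flag is $G\subsetneq\R^n$, since otherwise $d_G\equiv\infty$ and neither the quasihyperbolic metric nor the quantity $d(F,\partial G)$ makes sense; this is tacit in the paper as well.
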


\begin{nonsec}{\bf Quadrilateral and its modulus.} \label{quadrDef}
A bounded Jordan curve in the complex plane divides the extended complex plane $\C_\infty=\C\cup\{\infty\}$ into two domains $D_1$ and $D_2$ so that the common boundary of these domains is the curve in question. One of these domains is bounded and the other one is unbounded. If $D_1$ is the bounded domain and $z_1,z_2,z_3,z_4\in\partial D_1$ are distinct points occurring in this order when traversing $\partial D_1$ in the positive direction, then 
$(D_1;z_1,z_2,z_3,z_4)$ is a \emph{quadrilateral} \cite{du}.

By Riemann's mapping theorem, the domain $D_1$ can be now be mapped onto a rectangle $(0,1)\times(0,h)$ by a conformal mapping $f$ such that $f(z_1)=0$, $f(z_2)=1$, $f(z_3)=1+hi$ and $f(z_4)=hi$. The unique number $h$ here is called the \emph{conformal modulus of the quadrilateral} $(D_1;z_1,z_2,z_3,z_4)\,,$ denoted by ${\rm mod} (D_1;z_1,z_2,z_3,z_4) \,.$ It follows
from the definition that
\begin{equation} \label{recipIdty}
{\rm mod} (D_1;z_1,z_2,z_3,z_4) = 1/{\rm mod} (D_1;z_2,z_3,z_4, z_1) \,. 
\end{equation}
\end{nonsec}

\section{Capacity}\label{sec:cap}
Let $G\subset\R^n$ be a domain and $E\subset G$ a compact non-empty set. For $k=1,2,\ldots$, choose domains $G_k$ and compact sets $E_k$ such that 
\[
E\subset E_{k+1}\subset E_k\subset G_k\subset G_{k+1}\subset G, \quad G=\bigcup G_k \quad{\rm and}  \quad E=\bigcap E_k.
\] 
Then it is well-known that
\begin{align}\label{cap_convergence}
\lim_{k\to\infty}\capa(G_k,E_k)=\capa(G,E),  
\end{align}
see \cite[p. 167]{GMP}. Unfortunately, there does not seem to exist a quantitative estimate for the speed of convergence in \eqref{cap_convergence}.

Numerous variants of the definition \eqref{def_condensercap} of capacity are given in \cite{GMP}. First, the family $A$ may be replaced by several other families by \cite[Lemma 5.21, p. 161]{GMP}. Furthermore,
\begin{align}
\capa(G,E)=\M(\Delta(E,\partial G;G)),    
\end{align}
where $\Delta(E,F;G)$ is the family of all curves joining nonempty sets $E$ and $F$ in the closure of the domain $G$ and $\M$ stands for the modulus of a curve family \cite[Thm 5.23, p. 164]{GMP}. For the basic facts about capacities and moduli, the reader is referred to \cite{GMP, HKV, v1}.

\begin{lemma}\label{cgqm_5.14}\cite[(7.3), p. 107]{HKV}\newline
(1) If $0<a<b$ and $D=\overline{B}^n(b)\backslash B^n(a)$,
\begin{align*}
\M(\Delta(S^{n-1}(a),S^{n-1}(b);D))=\omega_{n-1}(\log({b}/{a}))^{1-n}.
\end{align*}
(2) If $R>0$, then for $x \in \B^n$, 
\begin{align*}
\M(\Delta(S^{n-1},B_{\rho}(x,R);\B^n))=\omega_{n-1}(\log({1}/{{\rm th} (R/2)}))^{1-n} \,.
\end{align*}
Here, $\omega_{n-1}$ is the $(n-1)$-dimensional surface area of the unit sphere $S^{n-1}.$ In particular, $\omega_1=2\pi.$
\end{lemma}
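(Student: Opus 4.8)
The plan is to prove part (1) by exhibiting the extremal metric for the curve family in the spherical ring $D$, and then to deduce part (2) from part (1) via the conformal invariance of the modulus. Put $\Gamma=\Delta(S^{n-1}(a),S^{n-1}(b);D)$. For the upper bound, test the modulus against the radial metric $\varrho(x)=1/(|x|\log(b/a))$ on $D$: each $\gamma\in\Gamma$ joins $|x|=a$ to $|x|=b$ inside $D$, and since the arclength element dominates $|d\,|x||$,
\[
\int_\gamma\varrho\,|dx|\ \ge\ \frac{1}{\log(b/a)}\int_\gamma\frac{|d\,|x||}{|x|}\ \ge\ \frac{|\log b-\log a|}{\log(b/a)}\ =\ 1 ,
\]
so $\varrho$ is admissible and, passing to spherical coordinates $dm=r^{n-1}\,dr\,d\sigma$,
\[
\M(\Gamma)\ \le\ \int_D\varrho^n\,dm\ =\ \omega_{n-1}\,(\log(b/a))^{-n}\int_a^b\frac{dr}{r}\ =\ \omega_{n-1}\,(\log(b/a))^{1-n} .
\]

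For the matching lower bound, let $\varrho$ be an arbitrary admissible metric for $\Gamma$. For each $\theta\in S^{n-1}$ the radial segment $r\mapsto r\theta$, $a\le r\le b$, lies in $\Gamma$, hence $\int_a^b\varrho(r\theta)\,dr\ge1$. Applying H\"older's inequality with exponents $n$ and $n/(n-1)$ to the splitting $\varrho(r\theta)=\big(\varrho(r\theta)\,r^{(n-1)/n}\big)\,r^{-(n-1)/n}$ gives $\int_a^b\varrho(r\theta)^n\,r^{n-1}\,dr\ge(\log(b/a))^{1-n}$, and integrating over $\theta\in S^{n-1}$ and reverting to Cartesian coordinates yields $\int_D\varrho^n\,dm\ge\omega_{n-1}(\log(b/a))^{1-n}$. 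Taking the infimum over admissible $\varrho$ completes part (1).

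For part (2), fix $x\in\B^n$ and choose a M\"obius automorphism $T$ of $\B^n$ with $T(x)=0$; as a hyperbolic isometry, $T$ carries $B_\rho(x,R)$ onto $B_\rho(0,R)$, which by \eqref{myrho} (the $n$-dimensional analogue of \eqref{hypDat0}) is the Euclidean ball $B^n(0,\th(R/2))$, while $T$ maps $S^{n-1}$ and $\B^n$ onto themselves. Since the modulus is a conformal, hence M\"obius, invariant \cite{GMP},
\[
\M\big(\Delta(S^{n-1},B_\rho(x,R);\B^n)\big)=\M\big(\Delta(S^{n-1},B^n(0,\th(R/2));\B^n)\big).
\]
Writing $a=\th(R/2)$, every curve joining $S^{n-1}$ to $B^n(0,a)$ in $\B^n$ meets $S^{n-1}(a)$ and hence contains a subcurve in the ring $\overline{B}^n(1)\setminus B^n(a)$ joining $S^{n-1}(a)$ to $S^{n-1}$, while conversely every curve of that ring family already joins $S^{n-1}$ to $\overline{B}^n(0,a)$; so the two families have equal modulus, and part (1) with $b=1$ gives the asserted value $\omega_{n-1}(\log(1/\th(R/2)))^{1-n}$.

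I expect the only genuinely non-formal point to be the lower bound in part (1): the upper bound is a one-line integration once the radial metric has been written down, whereas optimality must be checked against \emph{every} competing admissible metric, which is exactly what the H\"older estimate along radial rays delivers. The remaining ingredients of part (2) --- the normalizing M\"obius map, the identity $B_\rho(0,R)=B^n(0,\th(R/2))$, the M\"obius invariance of the modulus, and the minorization argument between the two curve families --- are all standard.
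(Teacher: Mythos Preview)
The paper does not prove this lemma; it is quoted from \cite[(7.3), p.~107]{HKV}. Your argument is correct and is exactly the standard one found there and in V\"ais\"al\"a \cite[7.5]{v1}: the radial density $\varrho(x)=1/(|x|\log(b/a))$ gives the upper bound in (1), H\"older's inequality along the radial segments gives the matching lower bound, and (2) is reduced to (1) by M\"obius invariance together with \eqref{hypDat0}.

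One cosmetic remark on the last sentence of your part (2): the curves of the ring family end on $S^{n-1}(a)=\partial B^n(0,a)$, not inside the \emph{open} ball $B^n(0,a)=B_\rho(0,R)$ that the statement uses, so the ``converse'' inclusion is not literally a containment of curve families. This is harmless for the modulus---either observe that replacing the open inner plate by its closure does not change $\M$, or rerun your H\"older lower bound on the radii $r\in[a',1]$ with $a'<a=\th(R/2)$ and let $a'\uparrow a$.
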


\begin{proposition} \label{capPerim} If $E\subset \B^2$ is a disk with hyperbolic perimeter equal to $P>0,$ then
\begin{equation}\label{eq:disk-cap-P}
\capa(\B^2,E) = 2 \pi/\log \frac{2\pi + \sqrt{P^2 + (2\pi)^2}}{P}\,.
\end{equation}
\end{proposition}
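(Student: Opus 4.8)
The plan is to reduce to the standard spherical (annular) condenser using conformal invariance, quote the known modulus formula, and close with a short algebraic computation. First I would invoke Lemma~\ref{perimBe}: a hyperbolic disk of hyperbolic radius $r$ has hyperbolic perimeter $2\pi\,{\rm sh}(r)$, so the hypothesis determines $r$ uniquely by ${\rm sh}(r)=P/(2\pi)$. Since the conformal capacity of $(\B^2,E)$ is invariant under M\"obius automorphisms of $\B^2$ --- a consequence of the identity $\capa(G,E)=\M(\Delta(E,\partial G;G))$ recalled in Section~\ref{sec:cap} together with the M\"obius invariance of the modulus --- there is no loss of generality in taking $E=B_\rho(0,r)$, and then $E=B^2(0,{\rm th}(r/2))$ by \eqref{hypDat0} (the bounding circle plays no role for the capacity).

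Next I would identify $\capa(\B^2,E)$ with the modulus of the curve family joining the two boundary circles $S^1$ and $\partial B_\rho(0,r)$ of the ring domain $\B^2\setminus E$, and read off from Lemma~\ref{cgqm_5.14}(2) (with $n=2$, $R=r$, and $\omega_1=2\pi$) that
\[
\capa(\B^2,E)=\frac{2\pi}{\log\bigl(1/{\rm th}(r/2)\bigr)}\,.
\]

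It then remains to rewrite ${\rm th}(r/2)$ in terms of $P$. Writing $t={\rm th}(r/2)$ and $a=P/(2\pi)$, and using ${\rm sh}(r)=2t/(1-t^2)$, the equation ${\rm sh}(r)=a$ becomes $a\,t^2+2t-a=0$, whose root in $(0,1)$ is $t=(\sqrt{1+a^2}-1)/a=a/(1+\sqrt{1+a^2})$. Since $\sqrt{1+a^2}=\sqrt{P^2+(2\pi)^2}/(2\pi)$, this gives $t=P/\bigl(2\pi+\sqrt{P^2+(2\pi)^2}\bigr)$, hence $1/{\rm th}(r/2)=\bigl(2\pi+\sqrt{P^2+(2\pi)^2}\bigr)/P$, and substituting into the displayed formula yields \eqref{eq:disk-cap-P}.

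I do not anticipate a genuine obstacle: the proof is essentially a translation into the right language plus elementary algebra. The only points meriting a sentence of justification are (i) that $\capa(\B^2,E)$ equals the modulus of the family of curves connecting the two boundary components of the ring domain $\B^2\setminus E$, so that Lemma~\ref{cgqm_5.14}(2) applies directly, and (ii) the choice of the correct (positive, less than $1$) root when solving the quadratic for $t$.
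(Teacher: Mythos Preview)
Your proof is correct and follows essentially the same route as the paper: invoke Lemma~\ref{perimBe} to relate $P$ to the hyperbolic radius, apply Lemma~\ref{cgqm_5.14}(2) to obtain $\capa(\B^2,E)=2\pi/\log(1/{\rm th}(r/2))$, and then rewrite ${\rm th}(r/2)$ in terms of $P$. The paper simply says ``the desired formula now follows using the half angle formula for ${\rm th}$'' where you spell out the quadratic, and it skips your centering step because Lemma~\ref{cgqm_5.14}(2) is already stated for arbitrary $x\in\B^n$.
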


\begin{proof} It readily follows from Lemma \ref{perimBe} that $E$ is a hyperbolic disk with the
hyperbolic radius $r={\rm arsh}(P/(2\pi))\,.$ Next, it follows from Lemma \ref{cgqm_5.14} (2) that
\[
\capa(\B^2,E) = 2 \pi/ \log(1/{\rm th}(r/2))\,.
\]
The desired formula now follows using the half angle formula for ${\rm th}\,.$
\end{proof}

\begin{remark}\label{fwgRmk} The proof of Gehring's theorem \ref{fwg71} follows from
 \cite[Cor. 6]{g} and Proposition
\ref{capPerim}. Observe that the hyperbolic metric used here is twice that of~\cite{g}
and therefore in Proposition~\ref{capPerim} we have the constant $2\pi$ (in the argument of the log function) whereas Gehring has $\pi$ in~\cite[Cor. 6]{g}.
\end{remark}

\begin{nonsec} {\bf The Gr\"otzsch and Teichm\"uller capacities.}
The following decreasing homeomorphisms are called {\it the Gr\"otzsch and Teichm\"uller capacities}, respectively \cite[(7.17), p. 121]{HKV}:
\begin{align*}
&\gamma_n:(1,\infty)\to (0,\infty),\quad \gamma_n(s)=\M(\Delta(\overline{\B}^n,[se_1,\infty];\R^n)),\quad s>1,\\
&\tau_n:(0,\infty)\to (0,\infty),\quad\tau_n(s)=\M(\Delta([-e_1,0],[se_1,\infty];\R^n)),\quad s>0.
\end{align*}
Here, $e_1,\ldots,e_n$ are the unit vectors of $\R^n$. These capacity functions fulfill $\gamma_n(s) = 2^{n-1} \tau_n(s^2-1)$ for $s>1$ and several estimates are given in \cite[Chapter 9]{HKV} for $n\ge3\,.$ In the case $n=2, r\in(0,1),$ the following explicit
formulas are well-known \cite[(7.18), p. 122]{HKV}, 
\begin{equation} \label{capGro}
\gamma_2(1/r)=\frac{2\pi}{\mu(r)}\,; \quad \mu(r)=\frac{\pi}{2}\frac{\K(r')}{\K(r)},\quad
\K(r)=\int^1_0 \frac{dx}{\sqrt{(1-x^2)(1-r^2x^2)}},\quad r'=\sqrt{1-r^2}\,.
\end{equation}
\end{nonsec}

\begin{nonsec}{\bf Quadrilateral modulus and curve families.} \label{quadrCurves}
The modulus of a quadrilateral $(D;z_1,z_2,z_3,z_4)$ defined in  \ref{quadrDef} is connected
with the modulus of the family of all curves in $D,$ joining the opposite boundary arcs $(z_2,z_3)$ and $(z_4,z_1),$ in a very simple way, as follows
\begin{equation} \label{2moduli}
{\rm mod}(D;z_1,z_2,z_3,z_4) = \M(\Delta((z_2,z_3), (z_4,z_1);D)) \,.
\end{equation}
\end{nonsec}

The next lemma is based on the symmetrization method. 

\begin{lemma}\label{lem_capgamma}\cite[Lemma 9.20, p. 163]{HKV}\newline
(1) If $x,y\in\B^n, x\neq y\,,$ and $E\subset\B^n$ is a continuum with $x,y\in E$, then 
\begin{align*}
\capa(\B^n,E)
\geq
\gamma_n\left(\frac{1}{\tnh(\rho_{\B^n}(x,y)\slash2)}\right).
\end{align*}
Equality holds here if $E$ is the geodesic segment $J[x,y]$ of the hyperbolic
metric joining $x$ and $y\,.$

(2) If $G$ is a simply connected domain in $\R^2\,,$ $E\subset G$ is a continuum, and
$x,y \in G, x\neq y\,,$ then
\begin{align*}
\capa(G,E)
\geq
\gamma_2\left(\frac{1}{\tnh(\rho_{G}(x,y)\slash2)}\right).
\end{align*}
\end{lemma}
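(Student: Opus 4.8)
The plan is to reduce part (2) to part (1) using the conformal invariance of both the capacity and the modulus of curve families, and then to prove part (1) via spherical symmetrization exactly as indicated in the hint to the statement, namely by symmetrizing the condenser $(\B^n, E)$ so that $E$ is replaced by the geodesic segment $J[x,y]$, where the equality case becomes transparent. For part (2): given a simply connected domain $G \subsetneq \R^2$, Riemann's mapping theorem supplies a conformal map $f : G \to \B^2$; then $f(E) \subset \B^2$ is a continuum containing $f(x), f(y)$, and by \eqref{eq:rhoG} we have $\rho_G(x,y) = \rho_{\B^2}(f(x),f(y))$. Since $\capa(G,E) = \M(\Delta(E,\partial G; G))$ and the modulus of a curve family is a conformal invariant, $\capa(G,E) = \capa(\B^2, f(E))$, and applying part (1) with $n=2$ to $(\B^2, f(E))$ gives exactly the claimed bound. (The case $G = \R^2$ is excluded because then $\capa(G,E) = 0$, or one treats it separately; but for the stated simply connected planar domains this is the whole argument.)

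For part (1), I would argue as follows. By conformal invariance of the hyperbolic metric and of the capacity under Möbius self-maps of $\B^n$ — more precisely, by applying a Möbius transformation carrying the pair $\{x,y\}$ to a pair of points symmetric about the origin on a diameter, say to $\{te_1, -te_1\}$ with $t = \tnh(\rho_{\B^n}(x,y)/2)$ (this is where \eqref{myrho} pins down $t$) — we may assume $x = te_1$ and $y = -te_1$. Now apply spherical symmetrization of the ring domain $\B^n \setminus E$ with respect to the ray $\{se_1 : s \le 0\}$ (or the appropriate direction). Spherical symmetrization replaces the outer boundary sphere $S^{n-1}$ by itself and replaces $E$, a continuum through $\pm te_1$, by a set $E^*$ that is symmetric about the $e_1$-axis; since $E$ is connected and contains both $\pm t e_1$, its spherical symmetrization contains the segment $[-te_1, te_1]$. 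By the symmetrization inequality \eqref{ine_symmetrization} the capacity does not increase, hence $\capa(\B^n, E) \ge \capa(\B^n, [-te_1, te_1])$. Finally, $[-te_1, te_1]$ is precisely the hyperbolic geodesic $J[x,y]$, and its capacity is computed from the definition of the Teichmüller/Grötzsch functions: a Möbius change of variables sends $(\B^n, [-te_1,te_1])$ to the Teichmüller-type configuration, giving $\capa(\B^n, [-te_1,te_1]) = \gamma_n(1/t) = \gamma_n(1/\tnh(\rho_{\B^n}(x,y)/2))$, which yields both the inequality and the stated equality case.

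The main obstacle is the symmetrization step: one must invoke the correct version of spherical (or polarization-type) symmetrization for ring domains in $\R^n$ and verify that it applies to the continuum $E \subset \B^n$ — i.e., that symmetrizing $\B^n \setminus E$ about the negative $e_1$-axis keeps the outer component equal to the complement of $\overline{\B}^n$ while shrinking the inner continuum down to (a set containing) the segment. This is standard in the literature (it is exactly the content of \cite[Lemma 9.20]{HKV} that we are allowed to cite), so in the write-up I would simply reference that lemma together with the capacity-equals-modulus identity and the conformal-invariance argument above; the genuinely new content here is only the passage from $\B^2$ to a general simply connected planar $G$, which is immediate once the curve-modulus formulation of capacity is in hand.
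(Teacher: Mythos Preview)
The paper does not give its own proof of this lemma; it is simply quoted from \cite[Lemma~9.20]{HKV}. Your outline is correct and is precisely the standard argument found in that reference: M\"obius normalization plus spherical symmetrization for part~(1), and Riemann mapping together with conformal invariance of capacity and of the hyperbolic metric for part~(2).

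One small computational slip worth fixing: if you send $\{x,y\}$ to the symmetric pair $\{-te_1, te_1\}$, then $\rho_{\B^n}(-te_1, te_1)=4\,\mathrm{arth}\,t$, so $t=\tnh(\rho_{\B^n}(x,y)/4)$, not $\tnh(\rho_{\B^n}(x,y)/2)$; consequently $\capa(\B^n,[-te_1,te_1])=\gamma_n\big((1+t^2)/(2t)\big)$ rather than $\gamma_n(1/t)$. The cleaner normalization is to send $x\mapsto 0$ and $y\mapsto r e_1$ with $r=\tnh(\rho_{\B^n}(x,y)/2)$; then inversion in $S^{n-1}$ carries the Gr\"otzsch ring to $\B^n\setminus[0,re_1]$, giving $\capa(\B^n,[0,re_1])=\gamma_n(1/r)$ directly. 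With that adjustment your argument goes through verbatim.
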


\begin{nonsec}{\bf Jung radius in quasihyperbolic geometry.} \label{jung}
For a domain $G\subset {\mathbb{R}^n}$ and a compact non-empty set $E\subset G$, define {\it the Jung radius} of $E$ in the quasihyperbolic metric as
\begin{align}
r_{k-Jung}(E)=\inf\{t>0\;:\; E\subset B_k(z,t) \,\, {\rm for \, some }\, z \in G\},    
\end{align}
where $B_k(z,t)$ is the quasihyperbolic ball centered at a point $z\in G$ with radius $t>0$. Because
$(G,k_G)$ is a metric space, it is clear that
\begin{align}
r_{k-Jung}(E)\leq k_G(E).    
\end{align}
From the monotonicity property of the capacity, we immediately get the upper bound
\begin{align} \label{capJung}
\capa(G,E)\leq\capa(G,B_k(z,T))    
\end{align}
for some $z\in G$ and $T=r_{k-Jung}(E)\le k_G(E)$. In particular, by Lemmas \ref{vu81} and \ref{cgqm_5.14} and the subadditivity of the modulus \cite[Ch. 7]{HKV}, for $\lambda \in (0,1),$
\begin{align} \label{capvu81}
\capa(G,E)\leq 2 m\, \omega_{n-1}(\log(1/\lambda))^{1-n}\,
\end{align}
for a continuum $E \subset G$ where $m$ is as in Lemma \ref{vu81}.
\end{nonsec}


\section{Analytical results for simple condensers}\label{sec:comp}
In this section, we study $\capa(G,E)$ and relate its values to various domain functionals. 
In particular, we focus on the symmetric condenser of Lemma \ref{Lem:LB-Tri-h} and show
that the capacity  $\capa(G,E)$ cannot be estimated from below in the same way as in Theorem~\ref{KuhnauThm} because here $E$ is nonconvex. We also 
consider the case when $G \setminus E$ is a polygonal ring domain defined in Subsection~\ref{polyringdomain}. 
We apply the
Schwarz-Christoffel transformation to give
an algorithm for a lower bound  of its capacity. Note that here it is not required that
$E$ is convex. This algorithm will be implemented
 in the next section.

\begin{nonsec}{\bf Symmetric segments.} We consider here condensers of the form
$(\mathbb{B}^2,E)$ where $E= \cup_{k=1}^m [0, s e^{i 2 \pi k/m}]$ and $m=3,4,5,\ldots,$ 
$0<s<1\,.$ For these condensers the capacity can be explicitly given.
\end{nonsec}

\begin{lem}\label{Lem:LB-Tri-h} 
The capacity $\capa( \mathbb{B}^2,E)$ is given by
\[
\capa( \mathbb{B}^2,E) = \frac{2 m \pi}{\mu\left(s^m\right)}.
\]
\end{lem}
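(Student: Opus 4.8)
The plan is to exploit the conformal invariance of the capacity together with an explicit Schwarz--Christoffel-type conformal map that unfolds the $m$-fold symmetric star $E$ onto a standard Gr\"otzsch-type configuration. Recall from Section~\ref{sec:cap} that $\capa(\mathbb{B}^2,E) = \M(\Delta(E,\partial\mathbb{B}^2;\mathbb{B}^2))$, so it suffices to compute this modulus. The key observation is that the map $z \mapsto z^m$ sends the sector $\{z\in\mathbb{B}^2 : 0 < \arg z < 2\pi/m\}$ conformally onto the slit disk $\mathbb{B}^2\setminus[0,1)$, carrying the two radial boundary segments $[0,s]$ and $[0,se^{i2\pi/m}]$ both onto the segment $[s^m,1)$ (approached from the two sides of the slit) and the boundary arc onto $\partial\mathbb{B}^2$.

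First I would use the $m$-fold symmetry: by the standard reflection/symmetrization principle for moduli of curve families invariant under the rotation group of order $m$ (each of the $m$ congruent sectors carries an equal share of the extremal field), one gets
\[
\capa(\mathbb{B}^2,E) = m \cdot \M\big(\Delta(\, [0,s]\cup[0,se^{i2\pi/m}],\ \partial\mathbb{B}^2\ ;\ S\,)\big),
\]
where $S$ is one fundamental sector. Next I would apply $w = z^m$ to transport the right-hand modulus to $\M(\Delta([s^m,1),\partial\mathbb{B}^2;\mathbb{B}^2\setminus[0,1)))$. Since the unit circle $\partial\mathbb{B}^2$ and the slit together bound this ring-type region, and the segment $[s^m,1)$ together with $[0,s^m]$ complete the slit, this last modulus is exactly the modulus of the Gr\"otzsch-type ring: the complement in $\mathbb{B}^2$ of the segment $[0,s^m]$, whose capacity is $\gamma_2(1/s^m)$. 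Indeed, reflecting across $\partial\mathbb{B}^2$ and relating to the Gr\"otzsch configuration $\Delta(\overline{\mathbb{B}}^2,[1/s^m\cdot e_1,\infty];\mathbb{R}^2)$, we obtain $\M(\Delta(S^1,[0,s^m];\mathbb{B}^2)) = \gamma_2(1/s^m) = 2\pi/\mu(s^m)$ by \eqref{capGro}. Combining, $\capa(\mathbb{B}^2,E) = m \cdot 2\pi/\mu(s^m) = 2m\pi/\mu(s^m)$, which is the claim.

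The main obstacle I anticipate is making the symmetry reduction rigorous: one must verify carefully that the extremal admissible function (or extremal metric) for $\Delta(E,\partial\mathbb{B}^2;\mathbb{B}^2)$ inherits the $m$-fold rotational symmetry, so that restricting to a single sector multiplies the modulus by exactly $1/m$ rather than merely bounding it. This follows from uniqueness of the extremal function together with averaging over the rotation group, but it deserves an explicit sentence. A secondary technical point is the behavior of the curve family along the slit $[0,s^m]$ versus $[s^m,1)$ after applying $z\mapsto z^m$: one should note that the two edges of the radial segments of $E$ map to the two sides of the slit and that the relevant connecting curves in $\mathbb{B}^2\setminus[0,1)$ correspond precisely to curves joining $[0,s^m]$ to $S^1$, so that no curves are lost or double-counted. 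Once these two points are addressed, the computation is a direct chain of conformal invariances and the explicit formula \eqref{capGro} for $\gamma_2$.
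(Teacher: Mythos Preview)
Your overall strategy---reduce by the $m$-fold rotational symmetry to one sector, then apply a power map and identify the result with a Gr\"otzsch configuration---is exactly the paper's. Two issues, however, need attention.

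First, a slip: under $z\mapsto z^m$ the segments $[0,s]$ and $[0,se^{i2\pi/m}]$ on the boundary of the sector map to (the two sides of) $[0,s^m]$, not to $[s^m,1)$. You tacitly correct this later when you speak of ``curves joining $[0,s^m]$ to $S^1$'', but the earlier sentence is wrong as written.

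Second, and more substantively: after the map $z\mapsto z^m$ you are computing the modulus of the family of curves in the \emph{slit disk} $\mathbb{B}^2\setminus[0,1)$ joining (both sides of) $[0,s^m]$ to $S^1$, and you assert this equals $\capa(\mathbb{B}^2,[0,s^m])=2\pi/\mu(s^m)$. But the latter is the modulus of curves in the \emph{full} disk joining $[0,s^m]$ to $S^1$; those curves are allowed to cross $[s^m,1)$, so the slit-disk family is a strict subfamily and a priori has smaller modulus. The two moduli are in fact equal, but this needs an argument: the extremal metric for the Gr\"otzsch problem is symmetric under $z\mapsto\bar z$, so the slit $[s^m,1)$ acts as free (Neumann) boundary and its presence does not change the modulus. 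Your ``secondary technical point'' about curves being lost or double-counted does not actually address this.

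The paper avoids this difficulty by using $g(z)=z^{m/2}$ instead, sending the sector to the upper half-disk and the two radii of $E$ to the diameter segment $[-s^{m/2},s^{m/2}]$; the reflection across the real axis then cleanly doubles the modulus to $\capa(\mathbb{B}^2,[-s^{m/2},s^{m/2}])$, and one finishes with the duplication identity $\mu(s^m)=2\mu\bigl(2s^{m/2}/(1+s^m)\bigr)$. Your route avoids that identity but trades it for the slit-removal step; either way one nontrivial symmetry argument beyond the initial $m$-fold reduction is needed, and you should make yours explicit.
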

\begin{proof}
Let $\theta= 2\pi/m$ and
\[
\hat{D}_1=\left\{z\,:\, z\in  \mathbb{B}^2, \quad 0<\arg z<\theta\right\}.
\]
 The domain $\hat{D}_1$ can be mapped by the conformal mapping 
\[
g(z)= z^{m/2}
\]
onto the upper half of the unit disk and the two segments from $s$ to $0$ and from $0$ to $s\,e^{\theta\i}$ are mapped onto the segment $[-s^{m/2},s^{m/2}]$. 
Let $\tilde\Delta$ be the family of curves in the upper half of the unit disk connecting the segment $[-s^{m/2},s^{m/2}]$ to the upper half of the unit circle. 
Then by symmetry,
\begin{equation}\label{eq:hT-1}
\capa( \mathbb{B}^2,E)=\mM(\Delta(E,S^1; \mathbb{B}^2))=m\mM(\tilde{\Delta}).
\end{equation}

By symmetry, it also follows from~\cite[7.12,~9.20]{HKV},
\begin{equation*} 
\mM(\tilde\Delta)=\frac{1}{2}\gamma_2\left(\frac{1}{\th\frac{1}{2}\rho_{ \mathbb{B}^2}(-s^{m/2},s^{m/2})}\right)= \frac{\pi}{\mu\left(\th\frac{1}{2}\rho_{ \mathbb{B}^2}(-s^{m/2},s^{m/2})\right)}.
\end{equation*}
Using the formula \eqref{myrho}, we have
\[
\th\frac{1}{2}\rho_{ \mathbb{B}^2}(-s^{m/2},s^{m/2})
=\frac{2s^{m/2}}{s^m+1}.
\]
and hence
\begin{equation*} 
\mM(\tilde\Delta) = \frac{\pi}{\mu\left(2s^{m/2}/(s^m+1)\right)}.
\end{equation*}
By \cite[(7.20)]{HKV}
\[
\mu(s^m)= 2\mu\left(\frac{2 s^{m/2}}{1+s^m}\right)
\]
which together with the previous equality yields
\begin{equation}\label{eq:hT-4}
\mM(\tilde\Delta) = \frac{2\pi}{\mu\left(s^m\right)}.
\end{equation}
The proof then follows from~\eqref{eq:hT-1} and~\eqref{eq:hT-4}.
\end{proof}

\begin{rem}\label{limitCap}
The well-known inequality \cite[(7.21)]{HKV}
\[
\log(1/s)<\mu(s)<\log(4/s)
\]
implies that $\log (1/s)/\mu(s) \to 1$ when $s\to 0+$ and hence we see that in Lemma \ref{Lem:LB-Tri-h} 
\[
\lim_{m\to\infty} \frac{2 m \pi}{\mu(s^m)} = \frac{2\pi}{\log(1/s)},
\]
which is in accordance with the convergence \eqref{cap_convergence}
and the formula for the capacity of the annulus in Lemma \ref{cgqm_5.14}(1).
\end{rem}

\begin{lemma}
If $c(m,s)$ is the capacity of the condenser of Lemma \ref{Lem:LB-Tri-h} for $m\geq3$ and $0<s<1$, and $d(m,s)$ is the capacity of a segment with hyperbolic perimeter equal to that of the compact set $E$ in the condenser of Lemma \ref{Lem:LB-Tri-h}, then there are numbers $m$ and $s$ such that
\begin{align*}
d(m,s)\geq c(m,s).
\end{align*}
\end{lemma}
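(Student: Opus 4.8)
The plan is to exhibit explicit values of $m$ and $s$ for which the inequality holds, by writing both $c(m,s)$ and $d(m,s)$ as functions of elementary hyperbolic quantities and comparing them numerically (and, where convenient, asymptotically). First I would record that, by Lemma~\ref{Lem:LB-Tri-h}, $c(m,s) = 2m\pi/\mu(s^m)$. Next I would compute the hyperbolic perimeter of $E = \cup_{k=1}^m [0,se^{i2\pi k/m}]$: each of the $m$ radial segments $[0,se^{i2\pi k/m}]$ has hyperbolic length $\rho_{\mathbb{B}^2}(0,s) = \log\frac{1+s}{1-s} = 2\arth s$, and the boundary $\partial E$ is traversed by going out and back along each spoke, so the hyperbolic perimeter of $\partial E$ is $P = 2m\cdot 2\arth s = 4m\arth s$.

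Then I would use Proposition~\ref{capPerim}'s companion computation for a segment: a segment $I=[0,r]\subset\mathbb{B}^2$ has hyperbolic perimeter of its boundary equal to $2\rho_{\mathbb{B}^2}(0,r) = 4\arth r$ (out and back), so choosing $r$ with $4\arth r = 4m\arth s$, i.e. $r = \tanh(m\arth s)$, gives a segment with the same hyperbolic perimeter as $\partial E$. By Lemma~\ref{lem_capgamma}(1) (equality case) together with the spherical symmetrization reduction of $[0,r]$ to a symmetric condenser, or more directly by the same argument as in Lemma~\ref{Lem:LB-Tri-h} with $m=2$ after a M\"obius move, the capacity of $(\mathbb{B}^2,[0,r])$ equals $\gamma_2\!\left(1/\tanh(\rho_{\mathbb{B}^2}(0,r)/2)\right) = \gamma_2(1/r) = 2\pi/\mu(r)$. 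Hence
\[
d(m,s) = \frac{2\pi}{\mu\!\left(\tanh(m\arth s)\right)}.
\]

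The claim $d(m,s)\geq c(m,s)$ thus becomes
\[
\frac{1}{\mu\!\left(\tanh(m\arth s)\right)} \;\geq\; \frac{m}{\mu(s^m)},
\qquad\text{i.e.}\qquad
\mu(s^m) \;\geq\; m\,\mu\!\left(\tanh(m\arth s)\right).
\]
I would then test this at a concrete point, e.g. $m=3$ with a moderately large $s$ such as $s=1/2$: compute $\tanh(3\arth(1/2)) = \tanh(3\cdot\tfrac12\log 3)$, evaluate both $\mu(s^3)=\mu(1/8)$ and $3\mu(\tanh(3\arth(1/2)))$ using the bounds $\log(1/t)<\mu(t)<\log(4/t)$ of Remark~\ref{limitCap} (or a direct numerical value of $\mu$), and verify the inequality. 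For a cleaner argument I would instead take the limit $s\to 0+$ with $m$ fixed: there $\tanh(m\arth s)\sim ms$, so $d(m,s)\to 2\pi/\log(1/(ms))$ while $c(m,s)\to 2\pi/\log(1/s)$ by Remark~\ref{limitCap}, and since $\log(1/(ms)) = \log(1/s)-\log m < \log(1/s)$ we get $d(m,s) > c(m,s)$ for all sufficiently small $s>0$ once $m\geq 2$ — this already proves the existence statement. The main obstacle is purely bookkeeping: making sure the "perimeter of $\partial E$" convention (counting each spoke twice, since $E$ has empty interior) is applied consistently to both $E$ and the comparison segment, so that $c$ and $d$ are compared on genuinely equal-perimeter condensers; once that normalization is fixed the inequality is immediate either numerically at a single point or asymptotically as $s\to 0+$.
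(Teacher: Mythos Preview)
Your derivation of the formulas $c(m,s)=2m\pi/\mu(s^m)$ and $d(m,s)=2\pi/\mu\!\bigl(\th(m\,\arth s)\bigr)$, together with the plan to verify the inequality at a single numerical point, is exactly the paper's argument (the paper picks $m=5$, $s=0.5$).

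The asymptotic ``cleaner argument'', however, has a genuine gap as written. From Remark~\ref{limitCap} you only obtain $\mu(t)\sim\log(1/t)$ as $t\to0+$, hence $d(m,s)\sim 2\pi/\log\!\bigl(1/(ms)\bigr)$ and $c(m,s)\sim 2\pi/\log(1/s)$. But these two right-hand sides are themselves asymptotically equivalent (their ratio tends to $1$ as $s\to0+$), so from $\log\!\bigl(1/(ms)\bigr)<\log(1/s)$ alone you \emph{cannot} conclude $d(m,s)>c(m,s)$ for small $s$: the error hidden in ``$\sim$'' is of the same order as the gap you are trying to detect. To repair this you need the sharper expansion $\mu(t)=\log(4/t)+o(1)$ as $t\to0+$, which after a short computation gives
\[
d(m,s)-c(m,s)=\frac{2\pi}{(\log(1/s))^2}\Bigl(\log m-\bigl(1-\tfrac1m\bigr)\log4+o(1)\Bigr).
\]
The constant in parentheses is positive for every integer $m\ge3$ but vanishes at $m=2$, so your claim ``once $m\ge2$'' is not correct either. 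Since the lemma assumes $m\ge3$, the repaired asymptotic route does succeed and gives a clean alternative to the paper's purely numerical check.
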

\begin{proof}
The hyperbolic perimeter of a segment $[0,r]$ is equal to twice of its hyperbolic diameter, so the value of the perimeter is by Lemma  \ref{perimBe}
\begin{align*}
4\,\text{arth}r=2\,\text{log}\frac{1+r}{1-r},\quad 0<r<1    
\end{align*}
and its capacity is by \eqref{capGro}
\begin{equation}\label{eq:seg-cap-s}
\capa(\B^2,[0,r])=2\pi\slash\mu(r).    
\end{equation}
From Lemma \ref{Lem:LB-Tri-h}, it follows that 
\begin{align*}
c(m,s)=\frac{2\pi m}{\mu(s^{m})}    
\end{align*}
and the perimeter of set $E$ in Lemma \ref{Lem:LB-Tri-h} is
\begin{align*}
2m \log \frac{1+s}{1-s}.
\end{align*}
For $t>0$, choose now $r$ and $s$ such that
\begin{align*}
t=4\,\text{arth}r=2m \log \frac{1+s}{1-s}.    
\end{align*}
Then for these values of $r$ and $s$, the capacities are
\begin{align*}
\capa(\B^2,[0,r])&=\frac{2\pi}{\mu(\th(t\slash4))},\\
\capa(\B^2,E)&=\frac{2m \pi}{\mu((\th(t\slash(4m)))^{m})}.
\end{align*}
Now, we claim that for some values of the parameters $s$ and $m$, equivalently $t$ and $m$, such that
\begin{align*}
\frac{2m \pi}{\mu((\th(t\slash(4m)))^{m})}<\frac{2\pi}{\mu(\th(t\slash4))},   
\end{align*}
and numerical computation shows that we can choose, for instance, $m=5$ and $s=0.5$.
\end{proof}

\begin{nonsec}{\bf The hypergeometric function and the SC transformation.}
Given complex numbers $a,b,c$ such that $c\neq0,-1,-2,\ldots$, the \emph{Gaussian hypergeometric function} is the analytic continuation to the slit plane $\C\backslash[1,\infty)$ defined by the series
\begin{equation}\label{eq:F}
F(a,b;c;z)=\sum^\infty_{n=0}\frac{(a,n)(b,n)}{(c,n)}\frac{z^n}{n!},\quad|z|<1.    
\end{equation}
Here, $(a,0)=1$ for $a\neq0$ and $(a,n)$ for $n=1,2,\ldots$ is the \emph{shifted factorial function}
\begin{align*}
(a,n)=a(a+1)(a+2)\cdots(a+n-1).    
\end{align*}
The Euler integral representation \cite{af,ky} 
\begin{equation} \label{int2F1}
 F (a, b; c; z) =\frac{\Gamma(b}{\Gamma(b)\Gamma(c -b)} \int_0^1 t^{b-1}(1 -t)^{c-b-1}
 (1 - tz)^{-a} dt \,
\end{equation}
links the  hypergeometric function with the conformal Schwarz-Christoffel transformation.
As shown in \cite{hvv}, this transformation delivers a conformal map of the upper half
plane onto a polygonal quadrilateral.
\end{nonsec}

\begin{theorem} \cite[Corollary 2.5]{hvv}\label{thm_quadriQ} 
Choose $a,b,c$ such that $0<a,b<1$ and $\max\{a+b,1\}\leq c\leq 1+\min\{a,b\}$. Let $Q$ be a polygonal quadrilateral in the upper half-plane with interior angles $b\pi$, $(c-b)\pi$, $(1-a)\pi$ and $(1+a-c)\pi$ at the vertices $0,1,A,B$, respectively. Then the conformal modulus of $Q$ is given by
\begin{align*}
\M(Q)=\frac{\K(r')}{\K(r)},    
\end{align*}
where $0<r<1$ fulfills the equation
\begin{align}
A-1=\frac{L(r')^{2(c-a-b)}F(c-a,c-b;c+1-a-b;r'^2)}{F(a,b;c;r^2)}=G(r)
\end{align}
with
\begin{equation}\label{eq:L}
L=\frac{B(c-b,1-a)}{B(b,c-b)}e^{(b+1-c)\pi i} 
\end{equation}
where $B(z,w)= \Gamma(z) \Gamma(w)/\Gamma(z+w)$ is the beta function.
\end{theorem}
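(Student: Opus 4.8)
The plan is to uniformise $Q$ by the upper half-plane $\uhp$ with a Schwarz--Christoffel map having a carefully chosen set of prevertices, to read the conformal modulus off from the prevertices alone, and then to express the free vertex $A$ via the Euler integral \eqref{int2F1}. \textbf{Step 1 (the Schwarz--Christoffel map).} The standing hypotheses $0<a,b<1$ and $\max\{a+b,1\}\le c\le 1+\min\{a,b\}$ guarantee that the four interior angles lie in $(0,\pi)$ (so $Q$ is a genuine convex quadrilateral), that every exponent occurring below exceeds $-1$, and that $F(a,b;c;r^{2})$ and $F(c-a,c-b;c+1-a-b;r'^{2})$ are well defined for $r\in(0,1)$. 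By the Riemann mapping theorem and Schwarz--Christoffel theory there is a conformal $\psi\colon\uhp\to Q$ extending continuously to $\partial\uhp$; using the Möbius freedom I would place the prevertices of the vertices $0$, $1$, $B$ at $0$, $1$, $\infty$, call the (then uniquely determined) prevertex of $A$ by $v\in(1,\infty)$, and set $r:=v^{-1/2}\in(0,1)$. With branches fixed in $\uhp$ this yields
\[
\psi(z)=C\int_{0}^{z}\zeta^{\,b-1}(\zeta-1)^{\,c-b-1}\bigl(\zeta-1/r^{2}\bigr)^{-a}\,d\zeta ,
\]
with $C$ fixed by $\psi(1)=1$; the ``missing exponent at $\infty$'' is $(b-1)+(c-b-1)-a=-(1+a-c)-1$, exactly the exponent attached to the angle $(1+a-c)\pi$ at $B$, so $\psi$ indeed maps $\uhp$ onto a quadrilateral with the prescribed angles.

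\textbf{Step 2 (the modulus).} Since $\psi$ is conformal, $\M(Q)$ equals the modulus of the quadrilateral $(\uhp;0,1,1/r^{2},\infty)$, that is, of the family of curves in $\uhp$ joining the boundary arc $(1,1/r^{2})$ to the boundary arc $(-\infty,0)$ in the sense of \eqref{2moduli}; in particular it does not depend on $a,b,c$ at all. I would compute it once, taking $a=b=\tfrac12$, $c=1$: the substitution $\zeta=t^{2}$ turns $\psi$ into a constant multiple of $\sn^{-1}(\sqrt{z},r)$, which maps $\uhp$ onto a Euclidean rectangle whose horizontal side is proportional to $\K(r)$ and whose vertical side is proportional to $\K(r')$, the points $0,1,1/r^{2},\infty$ going to its four corners in order. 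Hence the modulus of $(\uhp;0,1,1/r^{2},\infty)$ is $\K(r')/\K(r)$, and therefore $\M(Q)=\K(r')/\K(r)$.

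\textbf{Step 3 (the vertex $A$).} It remains to check $A-1=G(r)$. Writing on $(0,1)$ the branches $(\zeta-1)^{c-b-1}=e^{i\pi(c-b-1)}(1-\zeta)^{c-b-1}$ and $(\zeta-1/r^{2})^{-a}=e^{-i\pi a}r^{2a}(1-r^{2}\zeta)^{-a}$ and invoking \eqref{int2F1}, the normalisation $\psi(1)=1$ reads
\[
1=C\,e^{i\pi(c-b-1-a)}\,r^{2a}\,\frac{\Gamma(b)\Gamma(c-b)}{\Gamma(c)}\,F(a,b;c;r^{2}),
\]
which fixes $C$ and already puts $F(a,b;c;r^{2})$ in the denominator. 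Next, $\psi(v)=A$ gives $A-1=C\int_{1}^{1/r^{2}}\zeta^{\,b-1}(\zeta-1)^{\,c-b-1}(\zeta-1/r^{2})^{-a}\,d\zeta$; the substitution $\zeta=1+(1/r^{2}-1)s$, $s\in(0,1)$, factors out $(1/r^{2}-1)^{\,c-a-b}$ and an overall phase $e^{-i\pi a}$ and reduces the integral to $\int_{0}^{1}s^{\,c-b-1}(1-s)^{-a}\bigl(1-(1-1/r^{2})s\bigr)^{\,b-1}ds$, which by \eqref{int2F1} is a multiple of $F(1-b,c-b;c+1-a-b;1-1/r^{2})$. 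Since $1-1/r^{2}=-r'^{2}/r^{2}$, Pfaff's transformation sends the argument to $r'^{2}$, and Euler's transformation $F(\alpha,\beta;\gamma;z)=(1-z)^{\gamma-\alpha-\beta}F(\gamma-\alpha,\gamma-\beta;\gamma;z)$ then turns it into a power of $r$ times $F(c-a,c-b;c+1-a-b;r'^{2})$. Assembling everything, the $\Gamma$-factors combine into $B(c-b,1-a)/B(b,c-b)$, all powers of $r$ cancel, the surviving power of $r'$ is exactly $(r')^{2(c-a-b)}$, and the accumulated phase is $e^{i\pi(c-b-1-a)}\cdot e^{-i\pi a}\cdot e^{i\pi a}=e^{-i\pi(c-b-1)}=e^{(b+1-c)\pi i}$; this is precisely $A-1=G(r)$. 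Such an $r$ exists because $\psi$ does, and one checks that $G$ is strictly monotone on $(0,1)$, so $r$ is unique.

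\textbf{Main obstacle.} The conceptual ingredients --- existence of the Schwarz--Christoffel map, conformal invariance of the quadrilateral modulus, and the classical elliptic-integral computation of the rectangle's modulus --- are routine. The genuine work is the bookkeeping in Step~3: tracking the branches of the three complex powers along the path $[0,1/r^{2}]$ so that exactly the phase $e^{(b+1-c)\pi i}$ emerges, and chaining Pfaff's and Euler's transformations of ${}_2F_1$ so that precisely $(r')^{2(c-a-b)}$ survives while every power of $r$ cancels and the $\Gamma$-factors collapse to $B(c-b,1-a)/B(b,c-b)$.
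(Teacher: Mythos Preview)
The paper does not supply its own proof of this theorem: it is quoted verbatim from \cite[Corollary~2.5]{hvv} and used as a black box for the numerical algorithm in Section~\ref{sec:num}. So there is nothing in the present paper to compare your argument against.

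That said, your outline is the standard Schwarz--Christoffel derivation and is essentially correct; it is also, in spirit, what the cited source \cite{hvv} does. A few minor comments on Step~3. First, after the substitution $\zeta=1+(1/r^{2}-1)s$ the Euler integral \eqref{int2F1} gives $B(c-b,1-a)\,F(1-b,c-b;c+1-a-b;1-1/r^{2})$ directly; then a \emph{single} Pfaff transformation
\[
F(\alpha,\beta;\gamma;z)=(1-z)^{-\beta}F(\gamma-\alpha,\beta;\gamma;z/(z-1)),
\qquad z=1-1/r^{2},\ \ z/(z-1)=r'^{2},
\]
already lands on $r^{2(c-b)}F(c-a,c-b;c+1-a-b;r'^{2})$, so you do not need Euler's transformation on top of Pfaff's. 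Second, with this bookkeeping the powers of $r$ assemble as $(r'^{2}/r^{2})^{\,c-a-b}\cdot r^{2(c-b)}\cdot r^{-2a}=(r')^{2(c-a-b)}$, confirming the cancellation you announce. Third, your phase accounting is right: the contribution $e^{i\pi(c-b-1-a)}$ from the interval $(0,1)$ combines with the contribution $e^{-i\pi a}$ from $(1,1/r^{2})$ to leave exactly $e^{(b+1-c)\pi i}$ after dividing through by $C$. With these clarifications your proposal is a complete proof.
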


\begin{remark} (1)
The hypotheses in Theorem \ref{thm_quadriQ} imposed on the triple $a,b,c$ imply that the quadrilateral $Q$ is convex.

(2) The algorithm of Theorem \ref{thm_quadriQ} will be implemented and applied for numerical
computation in the following sections. 
\end{remark}


\begin{nonsec}{\bf Polygonal ring domains.}\label{polyringdomain} 
A domain $G\subset\R^n$ is called a \emph{ring domain} if it is homeomorphic to the spherical annulus $B^n(t)\backslash\overline{B}^n(s)$ for some numbers $0<s<t$. We consider here planar domains characterized as follows: There exist closed convex quadrilaterals $P_j$, $j=1,\ldots,m$, with $\text{int}P_j\cap\text{int}P_k=\varnothing$ for all $j\neq k$ such that $P=\text{int}\left(\cup^m_{j=1}P_j\right)$ is a ring domain and its both boundary components are polygonal with $m$ vertices. Assume, moreover, that the inner boundary component of $P$ can be written as
\begin{align}
\bigcup^m_{j=1}I_j    
\end{align}
and the exterior boundary component as
\begin{align}
\bigcup^m_{j=1}E_j,    
\end{align}
where $I_j$ and $E_j$ are opposite sides of $P_j$. The set $P$ is now called a \emph{polygonal ring domain} (see Figure~\ref{fig:plg}). The degenerate case when the inner polygon is a segment, will be also studied in Section~\ref{sec:num}, see Figure~\ref{fig:rec-slit}.

\begin{figure}[H] %
\centerline{
\scalebox{0.6}{\includegraphics[trim=4cm 8cm 4cm 8cm,clip]{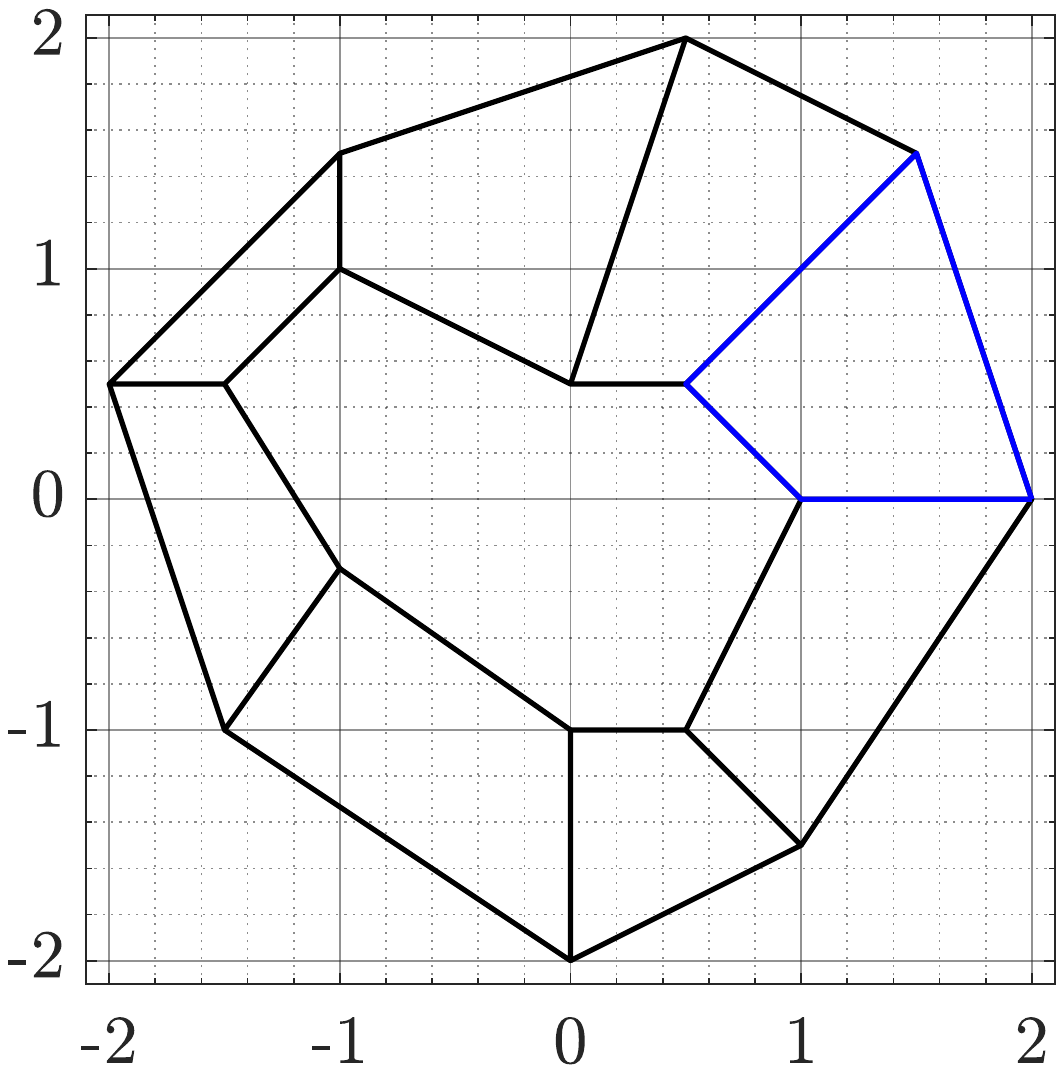}}
\hfill
\scalebox{0.6}{\includegraphics[trim=4cm 8cm 4cm 8cm,clip]{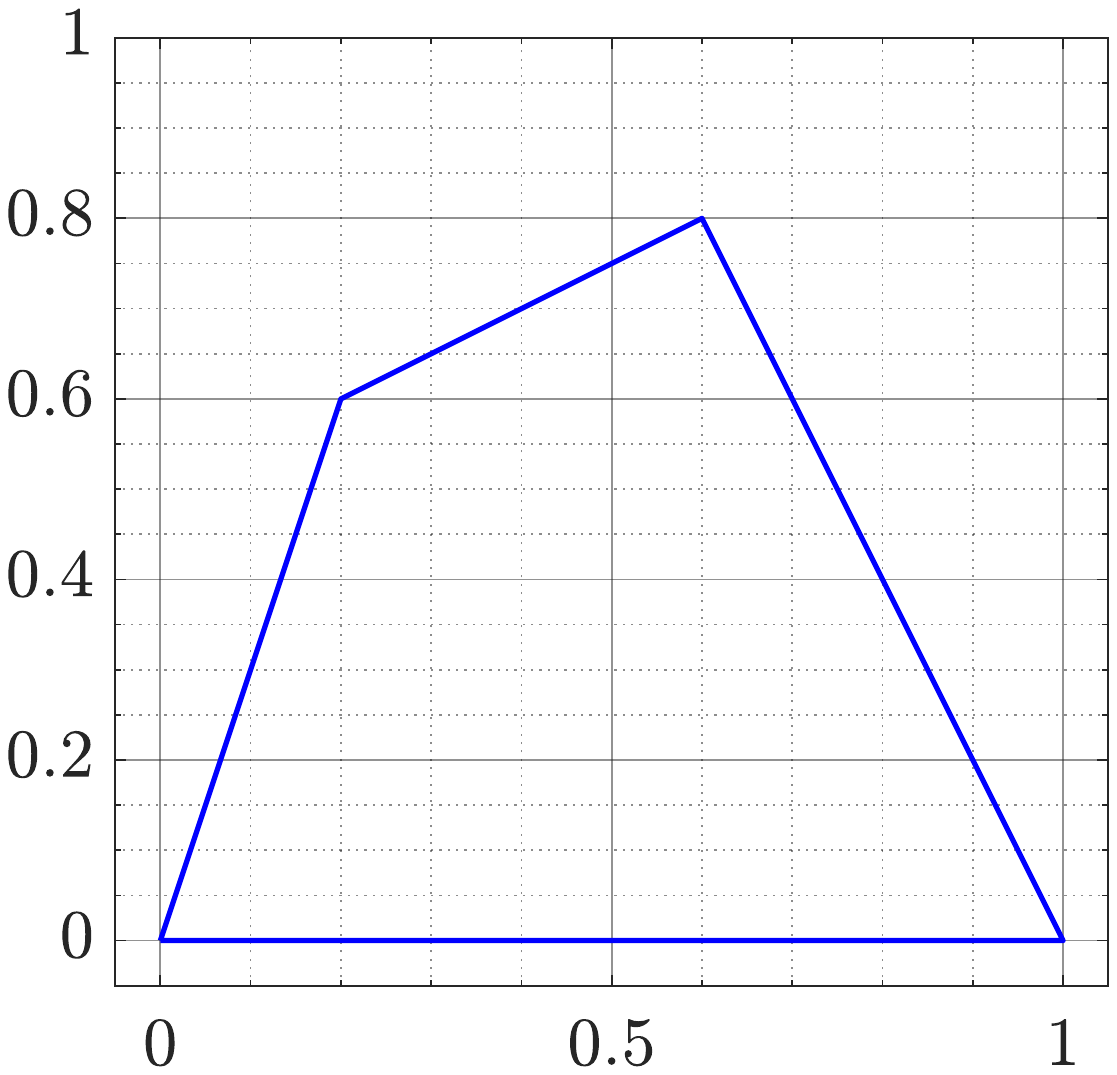}}
}
\caption{A polygonal ring domain with $m=8$ (left) and a trapezium quadrilateral (right).}
\label{fig:plg}
\end{figure}

\begin{lemma}\label{lem_Pjcapbound}
Let $P$ be a polygonal ring domain as above and denote $\Gamma_j=\Delta(I_j,E_j;P_j)$. Then
\begin{align*}
\capa(P,\cup^m_{j=1}I_j)\geq\sum^m_{j=1}\M(\Gamma_j).   
\end{align*}
\end{lemma}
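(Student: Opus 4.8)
The plan is to use the subadditivity (more precisely, the serial/grouping properties) of the modulus of curve families together with the fact that the curve family $\Delta(\cup_{j=1}^m I_j, \cup_{j=1}^m E_j; P)$ governing $\capa(P, \cup_{j=1}^m I_j)$ is \emph{minorized} by a family that splits across the pieces $P_j$. Recall that $\capa(P, \cup_{j=1}^m I_j) = \M(\Delta(\cup_j I_j, \partial_{\mathrm{out}} P; P))$ by the identification of capacity with modulus in Section~\ref{sec:cap}, and the outer boundary of $P$ is exactly $\cup_j E_j$.

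First I would observe that every curve $\gamma$ in $P$ joining the inner boundary $\cup_j I_j$ to the outer boundary $\cup_j E_j$ must, at some point, cross from one of the closed quadrilaterals $P_k$ to the exterior; in particular, since $\gamma$ starts on some $I_k \subset \partial P_k$ and the $P_j$ tile $P$, the curve $\gamma$ contains a subcurve lying in $\overline{P_k}$ that joins $I_k$ to the rest of $\partial P_k$. The key geometric point, which uses that $I_j$ and $E_j$ are \emph{opposite} sides of the convex quadrilateral $P_j$: a subcurve of $\gamma$ inside $\overline{P_k}$ starting on $I_k$ either reaches $E_k$, or exits through one of the two lateral sides of $P_k$ into a neighbour $P_{k'}$ — but in the latter case one continues the argument in $P_{k'}$. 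By an induction/connectedness argument one extracts from $\gamma$ a subcurve belonging to some $\Gamma_j = \Delta(I_j, E_j; P_j)$. Hence $\bigcup_{j=1}^m \Gamma_j$ is a \emph{subordinate} family: every curve in $\Delta(\cup_j I_j, \cup_j E_j; P)$ has a subcurve in $\bigcup_j \Gamma_j$. By the monotonicity and minorization properties of the modulus \cite[Ch.~7]{HKV}, $\M(\Delta(\cup_j I_j, \cup_j E_j; P)) \geq \M(\bigcup_{j=1}^m \Gamma_j)$.

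Next I would bound $\M(\bigcup_{j=1}^m \Gamma_j)$ from below by $\sum_{j=1}^m \M(\Gamma_j)$. Here the point is that the families $\Gamma_j$ live in \emph{disjoint} open sets $\mathrm{int}\,P_j$ (they are curves inside $P_j$), so they are separate in the sense needed for the modulus to be \emph{superadditive} over disjoint domains: if $\rho_j$ is an admissible density for $\Gamma_j$ supported in $P_j$, then $\rho = \sum_j \rho_j$ is admissible for $\bigcup_j \Gamma_j$ and, since the supports are essentially disjoint, $\int \rho^n\,dm = \sum_j \int \rho_j^n\,dm$; optimizing over each $\rho_j$ gives $\M(\bigcup_j \Gamma_j) \geq \sum_j \M(\Gamma_j)$. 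Combining this with the previous paragraph yields
\[
\capa(P,\cup_{j=1}^m I_j) = \M(\Delta(\cup_j I_j,\cup_j E_j;P)) \geq \M\Bigl(\bigcup_{j=1}^m \Gamma_j\Bigr) \geq \sum_{j=1}^m \M(\Gamma_j),
\]
which is the claim.

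The main obstacle is the combinatorial-topological step of showing that every connecting curve in $P$ contains a subcurve in some $\Gamma_j$ — i.e.\ that it crosses some single tile $P_j$ "the correct way," from its $I$-side to its $E$-side. One must use that $P$ is a ring domain whose two boundary components are $\cup_j I_j$ and $\cup_j E_j$ with $I_j, E_j$ opposite sides of $P_j$, so that the lateral sides of the $P_j$ form "cross-cuts" and a curve can only leave a tile through a lateral side into an adjacent tile or through $I_j$/$E_j$; a curve going from inner to outer boundary cannot return to the inner boundary arbitrarily, and a finiteness/connectedness argument forces a clean $I_j$-to-$E_j$ traversal in at least one tile. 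I would make this precise by tracking the first time the curve touches the outer boundary and looking at the tile containing the point just before that, or by an explicit induction on the number of lateral-side crossings. Once that lemma is in place, the modulus estimates are entirely standard.
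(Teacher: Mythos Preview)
Your approach overcomplicates the argument and contains genuine errors. First, the topological claim at the heart of your Step~1 is \emph{false}: a curve $\gamma\in\Delta(\cup_j I_j,\cup_j E_j;P)$ need not contain a subcurve in any $\Gamma_j$. For instance, $\gamma$ may start on $I_1$, cross the lateral side of $P_1$ into $P_2$, and terminate on $E_2$; then its portion in $P_1$ runs from $I_1$ to the lateral side and its portion in $P_2$ runs from the lateral side to $E_2$, and neither piece joins an $I_j$ to the corresponding $E_j$ inside a single tile. Your proposed ``induction on lateral crossings'' cannot repair this, because once the curve enters $P_{k'}$ through a lateral side it does not meet $I_{k'}$. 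Second, even if the minorization held, it would give the inequality in the wrong direction: if every curve of $\Gamma$ contains a subcurve in $\bigcup_j\Gamma_j$, then $\M(\Gamma)\le \M(\bigcup_j\Gamma_j)$, not $\ge$. Third, your Step~2 argument is also reversed: building $\rho=\sum_j\rho_j$ from admissible $\rho_j$ produces an admissible density for $\bigcup_j\Gamma_j$, which yields the \emph{upper} bound $\M(\bigcup_j\Gamma_j)\le\sum_j\int\rho_j^n$, hence $\M(\bigcup_j\Gamma_j)\le\sum_j\M(\Gamma_j)$ after optimizing, not $\ge$.

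The paper's proof avoids all of this with a single observation: each $\Gamma_j=\Delta(I_j,E_j;P_j)$ is itself a \emph{subfamily} of $\Delta(\cup_j I_j,\cup_j E_j;P)$, since any curve in $P_j$ from $I_j$ to $E_j$ is in particular a curve in $P$ from $\cup_j I_j$ to $\cup_j E_j$. The families $\Gamma_j$ are separate (they lie in the pairwise disjoint interiors $\mathrm{int}\,P_j$), so \cite[Thm~6.7]{v1} gives $\M(\Gamma)\ge\sum_j\M(\Gamma_j)$ immediately. The correct argument for that theorem goes the other way from what you wrote: take $\rho$ admissible for $\Gamma$; then $\rho\chi_{P_j}$ is admissible for $\Gamma_j$, so $\M(\Gamma_j)\le\int_{P_j}\rho^n$, and summing over the disjoint $P_j$ gives $\sum_j\M(\Gamma_j)\le\int\rho^n$. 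No minorization or subcurve extraction is needed.
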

\begin{proof}
The families of $\Gamma_j$ are separate subfamilies of 
\begin{align*}
\Delta\left(\bigcup^m_{j=1}I_j,\bigcup^m_{j=1}E_j;P\right)    
\end{align*}
and hence the proof follows from \cite[Thm 6.7]{v1}.
\end{proof}

The modulus $\M(\Gamma_j)$ can be computed numerically by mapping each of the convex quadrilaterals $P_j$ (see Figure~\ref{fig:plg} (left)) onto a quadrilateral which has two vertices on $0$ and $1$ as in Figure~\ref{fig:plg} (right) and then using the method presented in Theorem~\ref{thm_quadriQ}. Numerical results will be given in Subsection~\ref{sec:trapezium}.

\begin{remark}
Equality holds in the inequality of Lemma \ref{lem_Pjcapbound} in some symmetric situations. For instance, if $a_j=e^{2\pi ji\slash m}$, $E_j=[a_{j-1},a_j]$ and $I_j=[\lambda a_{j-1},\lambda a_j]$ with $j=1,\ldots,m$ and $0<\lambda<1$, the equality holds (see Figure~\ref{fig:reg-plg} below). If $m=4$ in this special case, an explicit analytic formula is known for the capacity, see \cite[7.45, pp. 404-406]{HKV}, \cite{nv}. 
\end{remark}

\end{nonsec}

\begin{nonsec}{\bf Quadrilaterals with three collinear vertices.}
We consider here quadrilaterals in the upper half-plane with the vertices $0,1,A,B$, where $A$ is a point on the segment $[1,B]$, i.e, $Q$ is a triangle with vertices at $0$, $1$, and $B$ (see Figure~\ref{fig:tri} (right)). The exact value of the conformal modulus of $Q$ can be obtained with the help of conformal mapping as in the following theorem. 

\begin{figure}[H] %
\centerline{
\scalebox{0.55}{\includegraphics[trim=0cm 0cm 0cm 0cm,clip]{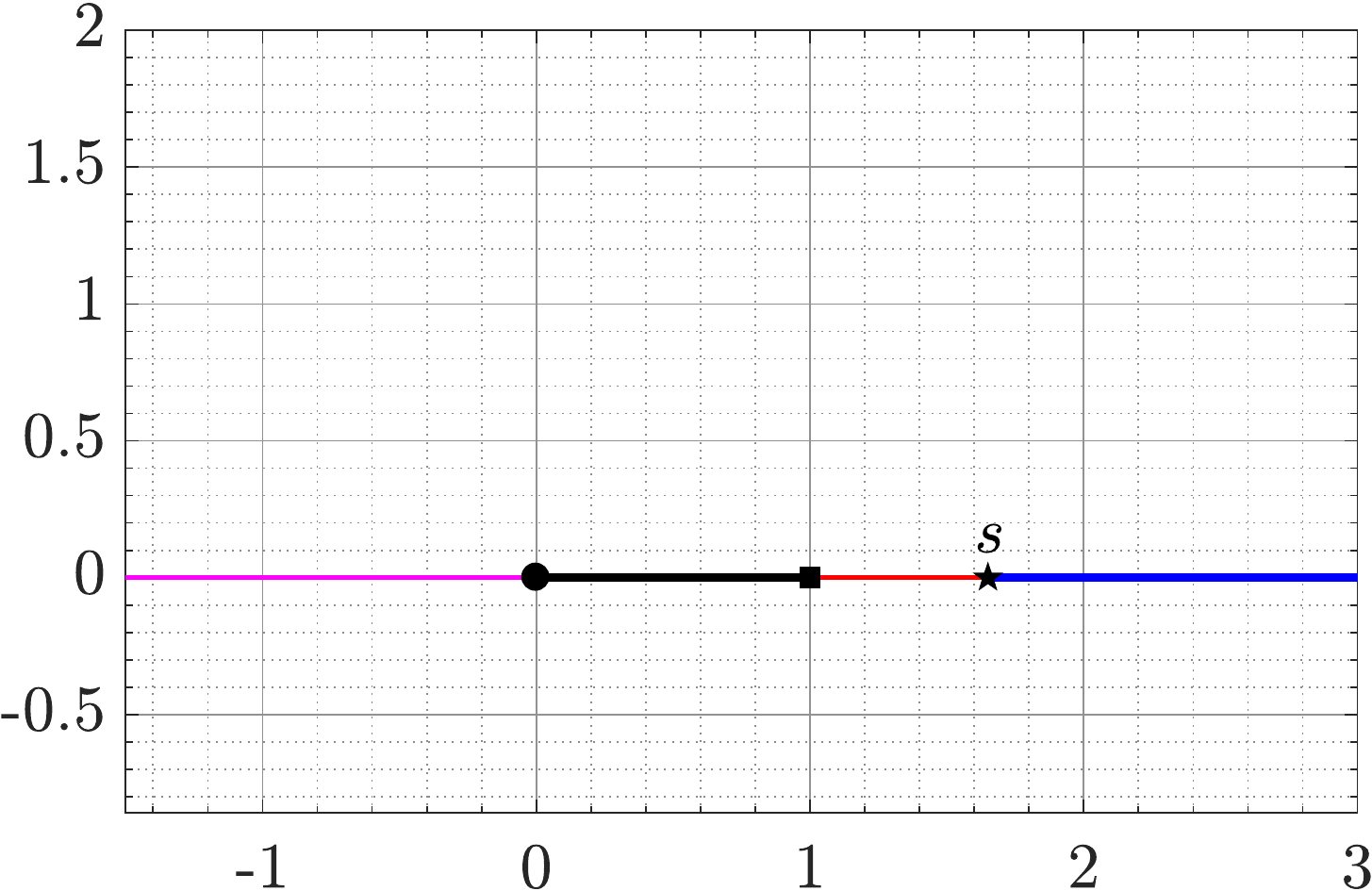}}
\hfill
\scalebox{0.55}{\includegraphics[trim=0cm 0cm 0cm 0cm,clip]{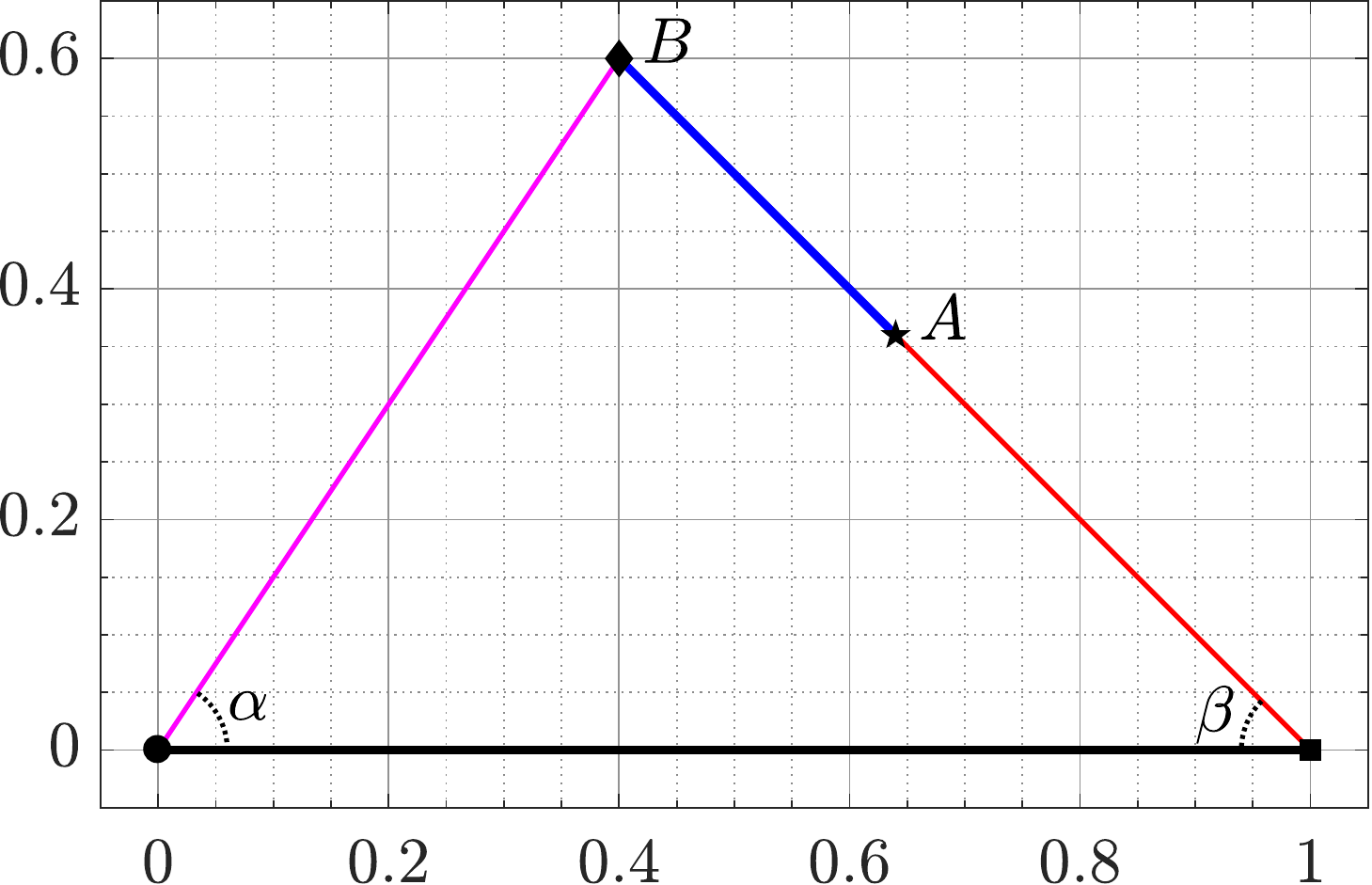}}
}
\caption{The upper half-plane and the quadrilateral with three collinear vertices.}
\label{fig:tri}
\end{figure}

\begin{theorem}\label{thm_quadriT} 
Let $Q$ be a quadrilateral in the upper half-plane with the vertices $0,1,A,B$, where $B$ is a point on the upper half-plane and $A$ is on the segment $[1,B]$, 
and let the interior angles at the vertices $0$ and $1$ be $\alpha$ and $\beta$, respectively.
Then the conformal modulus of $Q$ is given by
\begin{equation}\label{eq:half-tri-md1}
\M(Q)=\frac{\pi}{2\,\mu\left(\sqrt{1-1/s}\right)},
\end{equation}
where $1<s<\infty$ satisfies the  nonlinear real equation 
\begin{equation}\label{eq:map-h}
h(s) := e^{\i\beta}(f(s)-A)=0
\end{equation}
and
\begin{equation}\label{eq:map-f-T}
f(z) = \frac{\pi}{\alpha}\frac{\Gamma\left(\alpha/\pi+\beta/\pi\right)}{\Gamma(\alpha/\pi)\Gamma(\beta/\pi)}
\,z^{\alpha/\pi}\,F\left(\frac{\alpha}{\pi},1-\frac{\beta}{\pi};\frac{\alpha}{\pi}+1;z\right).
\end{equation}
\end{theorem}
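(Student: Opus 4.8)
The plan is to build an explicit conformal map of the upper half-plane $\uhp$ onto the quadrilateral $Q$ (a triangle with vertices $0,1,B$ and a fourth, collinear vertex $A$ on the side $[1,B]$) via a Schwarz--Christoffel integral, identify the preimage $s$ of the vertex $A$, and then read off the modulus of $Q$ from the standard correspondence between quadrilateral moduli and the Gr\"otzsch capacity $\gamma_2$ (equivalently the $\mu$-function of \eqref{capGro}).

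First I would set up the Schwarz--Christoffel map. Normalize so that the preimages of the vertices $0,1,B$ of the triangle are $0,\infty,1\in\pa\uhp$ (with the fourth point $A$ corresponding to some $s\in(1,\infty)$). The exterior angle data then force the SC integrand to be $z^{\alpha/\pi-1}(1-z)^{-\beta/\pi}$ up to a multiplicative constant, so that $f(z)=c\int_0^z w^{\alpha/\pi-1}(1-w)^{-\beta/\pi}\,dw$ maps $\uhp$ onto the triangle with $f(0)=0$ and interior angles $\alpha,\beta$ at the images of $0,1$. Using the Euler integral representation \eqref{int2F1} with $b=\alpha/\pi$, $a=\beta/\pi$, $c=\alpha/\pi+1$, this integral evaluates in closed form to the hypergeometric expression
\[
f(z)=\frac{\pi}{\alpha}\frac{\Gamma(\alpha/\pi+\beta/\pi)}{\Gamma(\alpha/\pi)\Gamma(\beta/\pi)}\,z^{\alpha/\pi}\,F\!\left(\frac{\alpha}{\pi},1-\frac{\beta}{\pi};\frac{\alpha}{\pi}+1;z\right),
\]
the constant $c$ being fixed by the requirement $f(1)=1$; this is exactly \eqref{eq:map-f-T}. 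Then $f$ extends continuously to $\pa\uhp$, and the preimage of the interior vertex $A$ on the segment $[1,B]$ is the unique $s>1$ with $f(s)=A$. Since $f(s)$ lies on the ray from $1$ in the direction $e^{-\i\beta}$ (the side $[1,B]$ emanates from the vertex at $1$ with interior angle $\beta$), the condition $f(s)=A$ is equivalent to the single real equation $h(s):=e^{\i\beta}(f(s)-A)=0$ of \eqref{eq:map-h}; I would note monotonicity of $s\mapsto\Re\bigl(e^{\i\beta}f(s)\bigr)$ along $(1,\infty)$ to get existence and uniqueness of the root.

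Next I would compute the modulus. Under $f$, the quadrilateral $(Q;0,1,A,B)$ corresponds to the quadrilateral $(\uhp;0,\infty,s,1)$, i.e. to $(\uhp;\infty,s,1,0)$ after cyclic relabeling; by conformal invariance $\M(Q)=\M(\uhp;0,\infty,s,1)$. The modulus of the half-plane quadrilateral with real vertices $0,1,s,\infty$ (or the appropriate cyclic order) is a classical quantity: mapping by a M\"obius transformation of $\uhp$ onto itself sending three of these points to $0,1,\infty$ reduces the cross-ratio of $\{0,1,s,\infty\}$ to a single parameter, and the resulting quadrilateral modulus is expressed through $\K$-integrals. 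Concretely, the relevant quadrilateral is conformally a Gr\"otzsch-type configuration whose modulus is $\tfrac12\gamma_2(1/r)^{\pm1}$ with $r^2=1/s$ (or $r^2=1-1/s$, depending on which pair of opposite sides one pairs up), and using $\gamma_2(1/r)=2\pi/\mu(r)$ from \eqref{capGro} together with the reciprocity \eqref{recipIdty} this becomes
\[
\M(Q)=\frac{\pi}{2\,\mu\!\left(\sqrt{1-1/s}\right)},
\]
which is \eqref{eq:half-tri-md1}. The bookkeeping of which boundary arcs of $\uhp$ are paired (and hence whether the modulus or its reciprocal appears, and whether the modulus parameter is $\sqrt{1/s}$ or its complementary $\sqrt{1-1/s}$) is where I expect the main care to be needed; I would pin it down by checking a degenerate or symmetric special case (e.g. letting $A\to B$, or choosing $\alpha=\beta=\pi/2$ so $Q$ is a right triangle and the answer can be cross-checked against the known formula for the modulus of a right triangle) to make sure the complementary modulus $r'=\sqrt{1-1/s}$ sits in the correct slot.
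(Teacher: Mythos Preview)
Your approach is essentially the paper's: build the Schwarz--Christoffel map from $\uhp$ onto the triangle, locate the preimage $s>1$ of $A$, and read off the modulus via the Teichm\"uller/Gr\"otzsch capacity (the paper invokes $\M(Q)=\tfrac12\tau_2\bigl(1/(s-1)\bigr)$ from \cite[7.12, 7.33]{HKV} and then \cite[(5.18)]{avv} to pass to $\mu(\sqrt{1-1/s})$, which is exactly the bookkeeping you defer to a special-case check).

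Two slips to fix. First, your normalization is internally inconsistent: you declare the preimages of $0,1,B$ to be $0,\infty,1$ but then impose $f(1)=1$. The paper's (and your displayed) formula for $f$ corresponds to $f(0)=0$, $f(1)=1$, $f(\infty)=B$, so that the preimage of $A\in[1,B]$ lies on $(1,\infty)$. Second, your SC integrand $(1-w)^{-\beta/\pi}$ produces interior angle $\pi-\beta$ at $f(1)$, not $\beta$, and the resulting hypergeometric would be $F(\alpha/\pi,\beta/\pi;\alpha/\pi+1;z)$ rather than the stated $F(\alpha/\pi,1-\beta/\pi;\alpha/\pi+1;z)$. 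The correct exponent is $\beta/\pi-1$, i.e.\ $f'(w)=c\,w^{\alpha/\pi-1}(1-w)^{\beta/\pi-1}$; this is the incomplete beta integrand $B_z(\alpha/\pi,\beta/\pi)$, and the identity $B_z(a,b)=\tfrac{z^a}{a}F(a,1-b;a+1;z)$ then gives exactly \eqref{eq:map-f-T}. With these corrections your sketch coincides with the paper's proof.
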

\begin{proof}
By~\cite[p.~458]{shaw}, the function
\begin{equation}\label{eq:half-tri}
w = f(z) = \frac{\Gamma\left(\alpha/\pi+\beta/\pi\right)}{\Gamma(\alpha/\pi)\Gamma(\beta/\pi)}
B_z(\alpha/\pi,\beta/\pi)
\end{equation}
conformally maps the upper half-plane $\Im z>0$ onto the interior of the triangle $Q$ such that $f(0)=0$, $f(1)=1$, and $f(\infty)=B$ (see Figure~\ref{fig:tri}). Here $B_z$ is the incomplete beta function which can be written in terms of the Gaussian hypergeometric function $F$ as~\cite[6.6.8]{as}
\[
B_z(\alpha/\pi,1/2) = \frac{\pi}{\alpha}\,z^{\alpha/\pi}\,F\left(\frac{\alpha}{\pi},1-\frac{\beta}{\pi};\frac{\alpha}{\pi}+1;z\right).
\]
The mapping function $f$ is then given by~\eqref{eq:map-f-T}. By the conformal  invariance of the  conformal modulus, it follows from \cite[Lemma 7.12, 7.33(1)]{HKV} that
\begin{equation}\label{eq:half-tri-md2}
\M(Q)=\frac{1}{2}\tau_2\left(\frac{1}{s-1}\right),    
\end{equation}
where $s$ satisfies the equation $f(s)=A$ or, equivalently, $s$ satisfies the equation~\eqref{eq:map-h}. 
The conformal mapping $w=f(z)$ maps the infinite segment $(1,\infty)$ on the real line onto the the finite segment $[1,B]$ on the boundary of the quadrilateral $Q$. Thus, both $A$ and $f(s)$ for $1<s<\infty$ are on the segment $[1,B]$, and hence
\[
h(s) = e^{\i\beta}(f(s)-A) = e^{\i\beta}(f(s)-1)-e^{\i\beta}(A-1)
\]
is a real-valued function. 
Then by~\cite[(5.18)]{avv}, we obtain~\eqref{eq:half-tri-md1} from~\eqref{eq:half-tri-md2}.
\end{proof}

\end{nonsec}

\section{Numerical algorithms}\label{sec:num}

In this section, we describe numerical methods for computation of the capacity of condensers and the modulus of quadrilaterals.

\begin{nonsec}{\bf Computation of hyperbolic perimeter in the unit disk.}
If $E\subset\B^2$ is a continuum with piecewise smooth boundary, then the hyperbolic perimeter of $E$ is~\cite{be} 
\[
\mbox{h-perim}_{\B^2}(E)=\int_{\partial E}\frac{2|dz|}{1-|z|^2}.
\]
If the boundary $\partial E$ is parametrized by $\xi(t)$, $0\le t\le2\pi$, then 
\[
\mbox{h-perim}_{\B^2}(E)=\int_{0}^{2\pi}\frac{2|\xi'(t)|}{1-|\xi(t)|^2}dt.
\]
The integrand is $2\pi$-periodic and hence it can be accurately approximated by the trapezoidal rule~\cite{dr} to obtain
\[
\mbox{h-perim}_{\B^2}(E)\approx \frac{4\pi}{n}\sum_{k=1}^{n}\frac{|\xi'(s_k)|}{1-|\xi(s_k)|^2},
\]
where
\begin{equation}\label{eq:sk}
s_k = (k-1) \frac{2 \pi}{n}, \quad k = 1, \ldots, n,
\end{equation}
and $n$ is an even integer.

\end{nonsec}

\begin{nonsec}{\bf Computation of hyperbolic perimeter in simply connected domains.}\label{sec:hyp-diam}
As above, let $E\subset G$ be a continuum in a simply connected domain $G$ and $f\,:\,G\to \B^2=f(G)$ be a conformal map. Then $f$ maps the connected set $E$ onto a connected set $\hat E\subset\B^2$. 
If $\mbox{h-perim}_{G}(E)$ is the hyperbolic perimeter of $E$ with respect to the hyperbolic metric $\rho_G$ in $G$, then $\mbox{h-perim}_G(E)=\mbox{h-perim}_{\B^2}(\hat E)$. 
Furthermore, if the boundary $\partial E$ is parametrized by $\xi(t)$, $0\le t\le2\pi$, then $\partial\hat E$ is parametrized by $\zeta(t)=f(\xi(t))$, $0\le t\le2\pi$. 
The parametrization $\zeta(t)$ is computed by approximating numerically the conformal mapping $f$, which is done here by the numerical method presented in~\cite{Nas-cmft15,nvs}. The derivative $\zeta'(t)$ is computed by approximating the real and imaginary parts of $\zeta(t)$ by trigonometric interpolating polynomials and then differentiating the interpolating polynomials. These polynomials can be computed using FFT~\cite{Weg05}. In terms of  $\zeta(t)$ and $\zeta'(t)$, we can compute $\mbox{h-perim}_{G}(E)$ by
\[
\mbox{h-perim}_{G}(E)=\mbox{h-perim}_{\B^2}(\hat E)
=\int_{0}^{2\pi}\frac{2|\zeta'(t)|}{1-|\zeta(t)|^2}dt,
\]
which can be approximated by the trapezoidal rule to obtain
\[
\mbox{h-perim}_{G}(E)\approx \frac{4\pi}{n}\sum_{k=1}^{n}\frac{|\zeta'(s_k)|}{1-|\zeta(s_k)|^2},
\]
where $s_1, \ldots, s_n$ are as in~\eqref{eq:sk}.
\end{nonsec}

\begin{nonsec}{\bf Algorithm for the capacity of a polygonal ring domain.}
Consider a bounded simply connected domain $G$ in the complex plane and a compact set $E\subset G$ such that $D= G\setminus E$ is a doubly connected domain. 
In this paper, the capacity of the condenser $(G,E)$ will be computed by the MATLAB function \verb|annq| from~\cite{nv}. In this function \verb|annq|, the capacity is computed by a fast method based on an implementation of the Fast Multipole Method toolbox~\cite{Gre-Gim12} in solving the boundary integral equation with the generalized Neumann kernel~\cite{Nas-ETNA}.

We assume that the boundary components of $D= G\setminus E$ are piecewise smooth Jordan curves.
Let $\Gamma_1$ be the external boundary component and $\Gamma_2$ be the inner boundary component such that $\Gamma_1$ is oriented counterclockwise and $\Gamma_2$ is oriented clockwise.
We parametrize each boundary component $\Gamma_j$ by a $2\pi$-periodic function $\eta_j(\delta_j(t))$, $0\le t\le 2\pi$, where $\delta_j\,:\,[0,2\pi]\to[0,2\pi]$ is a bijective strictly monotonically increasing function and $\eta_j$ is a $2\pi$-periodic parametrization of $\Gamma_j$, which is assumed to be smooth except at the corner points. 
The function $\delta_j$ is introduced to remove the singularity in the solution of the integral equation at the corner points~\cite{kre90}.
When $\Gamma_j$ is smooth, we assume $\delta_j(t)=t$, $0\le t\le 2\pi$. 
If $\Gamma_j$ has corners, we choose the function $\delta_j$ as in~\cite[p.~697]{LSN17}. 
Then, we define the vectors \texttt{et} and \texttt{etp} in MATLAB by
\begin{eqnarray*}
\texttt{et} &=& [\eta_1(\delta_1(\bs))\,;\,\eta_2(\delta_2(\bs))]\in\C^{2n}, \\
\texttt{etp} &=& [\eta_1'(\delta_1(\bs))\delta_1'(\bs)\,;\,\eta_2'(\delta_2(\bs))\delta_2'(\bs)]\in\C^{2n},  
\end{eqnarray*}
where $\bs=[s_1,\ldots, s_n]^T\in\R^n$ and $s_1, \ldots, s_n$ are given by~\eqref{eq:sk}. The MATLAB function \verb|annq| is then used to approximate $\capa(G,E)$ as follows,
\begin{verbatim}
  [~,cap] = annq(et,etp,n,alpha,z2,'b'),
\end{verbatim}
where $\alpha$ is an auxiliary point in the domain $D$ and $z_2$ is an auxiliary point in the interior of $E$. The values of the parameters in the function \verb|annq| are chosen as in~\cite{nv}.

The readers are referred to~\cite{LSN17,Nas-ETNA,nv} for more details.

\end{nonsec}

\begin{nonsec}{\bf Algorithm for the modulus of quadrilateral.}
In this section, we present a MATLAB implementation of the methods presented in Theorems~\ref{thm_quadriQ} and~\ref{thm_quadriT}. 

Let $\QM(A,B)$ be the modulus of the quadrilateral with the vertices $0,1,A,B$ as described in Theorem~\ref{thm_quadriQ}. Then by symmetry with respect to the line ${\rm Re} z=1/2$,  we have
\begin{equation} \label{symm}
\QM(A,B) =\QM(-\overline{B}+1, -\overline{A}+1)\,,
\end{equation}
and it follows from  \eqref{recipIdty} that
\begin{equation} \label{RecipFormula}
\QM(A,B) = \frac{2}{\pi}\mu(r) = 1/ \QM((B-1)/(A-1),-1/(A-1))\,.
\end{equation}
Here $\mu(r)$ is defined by~\eqref{capGro}, $r'=\sqrt{1-r^2}$, and 
$r$ is the solution of the nonlinear equation $h(r)=0$ where
\[
h(r)=\frac{(r')^{2(c-a-b)}F(c-a,c-b;c+1-a-b;r'^2)}{F(a,b;c;r^2)}-\frac{A-1}{L}
\]
and $L$ is given by~\eqref{eq:L}.
The equation $h(r)=0$ is solved for $r\in(0,1)$ using the MATLAB function \verb|fzero| if $f(10^{-6})\times f(1-10^{-13})<0$. If $f(10^{-6})\times f(1-10^{-13})>0$, an approximate value to the solution $r$ of the equation $h(r)=0$ is computed by minimizing the function $h^2(r)$ using the MATLAB function \verb|fminbnd|. The values of $\mu(r)$ are then computed as described in~\cite{nv}. This method for computing $\QM(A,B)$ is implemented in MATLAB as in the following code, which is based on the Mathematica code presented in~\cite{hvv}. 

\begin{lstlisting}
function md = QM(A,B)
beta = @(x,y)(gamma(x)*gamma(y)/gamma(x+y));
a =  1-(angle(A-1)-angle(A-B))/pi; b =  angle(B)/pi;
c = (pi-angle(A-1)+angle(B))/pi;
L = (beta(c-b,1-a)/beta(b,c-b))*exp(i*(b+1-c)*pi);
f = @(x)(h(x)); ff= @(x)(h(x)^2);
if f(1e-6)*f(1-1e-13)<0
    r = fzero(f,[1e-6,1-1e-13]);
else
    r = fminbnd(ff,0,1,optimset('TolX',1e-12));
end
md = (2/pi)*mu(r);
% 
function y = h(x)
   nn = ((1-x^2)^(c-a-b))*hypergeom([c-a,c-b],c+1-a-b,1-x^2);
   dd =  hypergeom([a,b],c,x^2);
    y =  nn/dd-real((A-1)/L);
end
end
\end{lstlisting}

The above MATLAB function \verb|QM| is tested with the following example. Let $B=-5+2\i$ and let $z=x+\i y$ be any point in the rectangle $[-1,3]\times[1,3]$. We define the function
\[
u(x,y)=\left|\QM(z,B)\cdot \QM((B-1)/(z-1),-1/(z-1))-1\right|.
\]
The function $u$ should be identically zero by \eqref{RecipFormula}. The surface plot of this function $u$ is presented in Figure~\ref{fig:QM-surf}. The figure shows that the maximum value of $u(x,y)$ over the rectangle is of the order $10^{-14}$.

\begin{figure}[H] %
\centerline{
\scalebox{0.6}{\includegraphics[trim=0cm 0cm 0cm 0cm,clip]{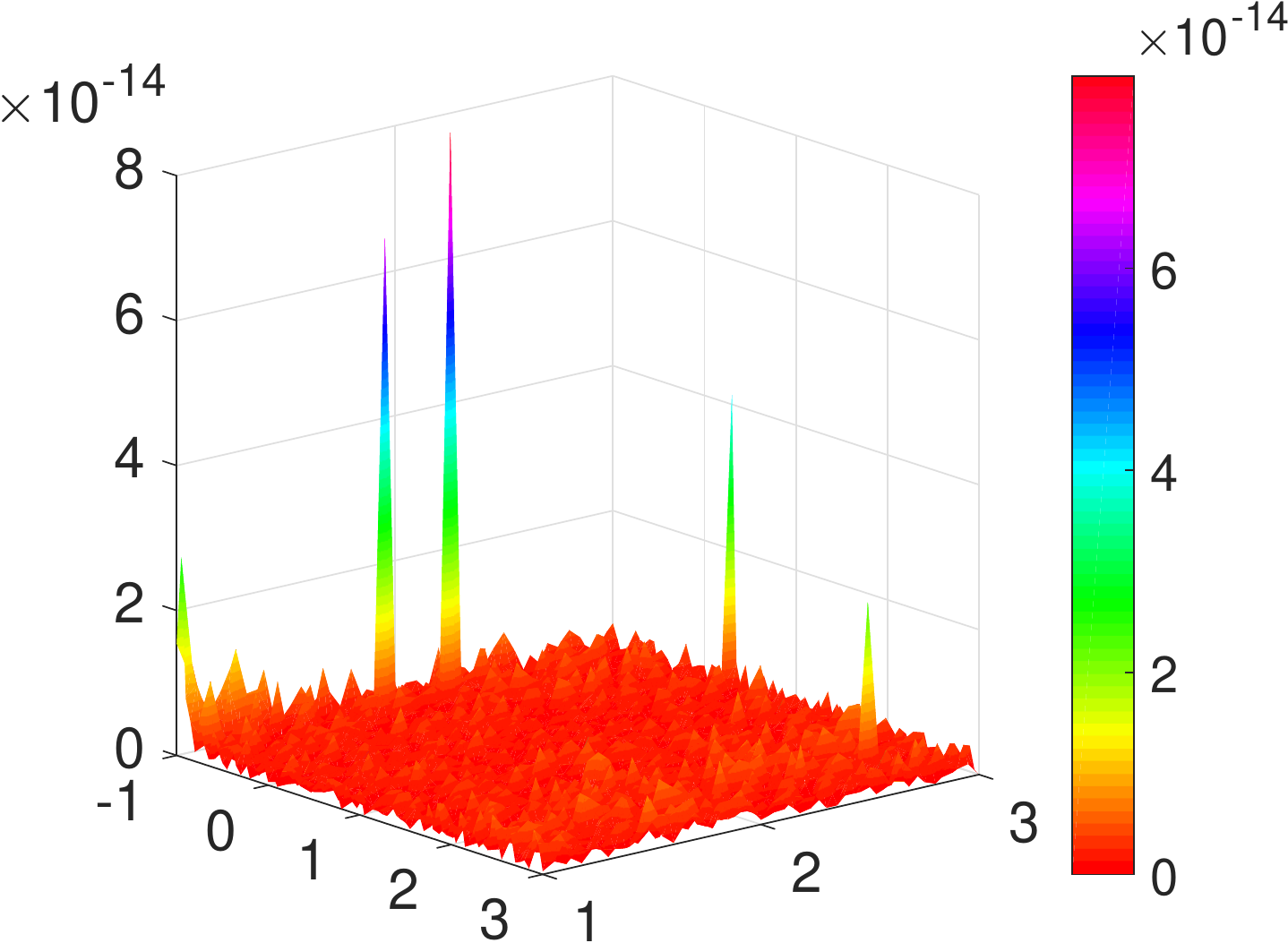}}
}
\caption{A surface plot of the function $u(x,y)$ over $[-1,3]\times[1,3]$.}
\label{fig:QM-surf}
\end{figure}

The MATLAB function \verb|QM| is also tested for several values of $A$ and $B$ as in Table~\ref{tab:quad-AB} where, in view of~\eqref{RecipFormula}, the error is computed by
\[
\left|\QM(A,B)\cdot \QM((B-1)/(A-1),-1/(A-1))-1\right|.
\]

\begin{table}[hbt]
\caption{The values of $\QM(A,B)$ for several values of $A$ and $B$.}
\label{tab:quad-AB}%
\begin{tabular}{c|c|c|c}\hline
$A$      & $B$      & $\QM(A,B)$         & Error    \\ \hline
$ 7+5\i$ & $-1+2\i$ & $1.17336589158553$ & $1.1102\times 10^{-16}$ \\
$ 8+3\i$ & $-1+1\i$ & $0.71853428024898$ & $4.6629\times 10^{-15}$ \\
$ 5+5\i$ & $-3+1\i$ & $1.00171178298845$ & $2.4425\times 10^{-15}$ \\
$ 7+4\i$ & $-3+3\i$ & $1.17821610141750$ & $6.6613\times 10^{-16}$ \\
$ 5+5\i$ & $-1+2\i$ & $1.27382477147819$ & $8.8818\times 10^{-16}$ \\
$ 7+5\i$ & $ 0+1\i$ & $0.92223220304256$ & $8.8818\times 10^{-16}$ \\
$ 7+3\i$ & $ 1+2\i$ & $1.68574560877551$ & $2.5535\times 10^{-15}$ \\
$ 4+5\i$ & $-2+1\i$ & $1.02479880902234$ & $1.9984\times 10^{-15}$ \\
\hline %
\end{tabular}
\end{table}

For the method presented in Theorem~\ref{thm_quadriT}, let $\QMt(A,B)$ be the modulus of the quadrilateral with the vertices $0,1,A,B$, where $B$ is a point on the upper half-plane and $A$ is on the segment $[1,B]$. Then, a MATLAB code for computing the values of $\QMt(A,B)$ can be written as follows, where the nonlinear real equation $h(s)=0$ is solved using the MATLAB function \verb|fzero|. 

\begin{lstlisting}
function md = QMt(A,B)
alp = angle(B);       alppi = alp/pi;
bet = pi-angle(-1+B); betpi = bet/pi;
map = @(z)((gamma(alppi+betpi)/(gamma(alppi)*gamma(betpi)*alppi)).*...
            z.^alppi.*hypergeom([alppi,1-betpi],alppi+1,z));
h   = @(s)(real(exp(i*bet)*(map(s+i*1e-100)-A)));
c  = 1; d  = 10;
while h(c)*h(d)>0 d=2*d; end
s  = fzero(h,[c,d])
md = 0.5*pi/mu(sqrt(1-1/s));
end
\end{lstlisting}

\end{nonsec}

\begin{nonsec}{\bf Modulus of isosceles trapezoid.}\label{sec:trapezoid}
Let the convex quadrilateral $P_j$ and let the segments $E_j=[a_{j-1},a_j]$ and $I_j=[b_{j-1},b_j]$, $j=1,2,\ldots,m$, be as described in Subsection~\ref{polyringdomain}. We consider first the case of a regular polygon, i.e., we assume that
\[
a_j=e^{2\pi ji\slash m}, \quad b_j=\lambda e^{2\pi ji\slash m}, \quad j=0,1,\ldots,m,
\]
and hence $a_0=a_m$ and $b_0=b_m$ (see Figure~\ref{fig:reg-plg} (left)). Let $\Gamma_j=\Delta(I_j,E_j;P_j)$. Then, for this case, we have
\[
\M(\Gamma_1)=\M(\Gamma_2)=\cdots=\M(\Gamma_m),
\]
and
\[
\capa\left(P,\bigcup^m_{j=1}I_j\right)=\sum^m_{j=1}\M(\Gamma_j)=m\,\M(\Gamma_1).
\]

\begin{figure}[H] %
\centerline{
\scalebox{0.6}{\includegraphics[trim=4cm 8cm 4cm 8cm,clip]{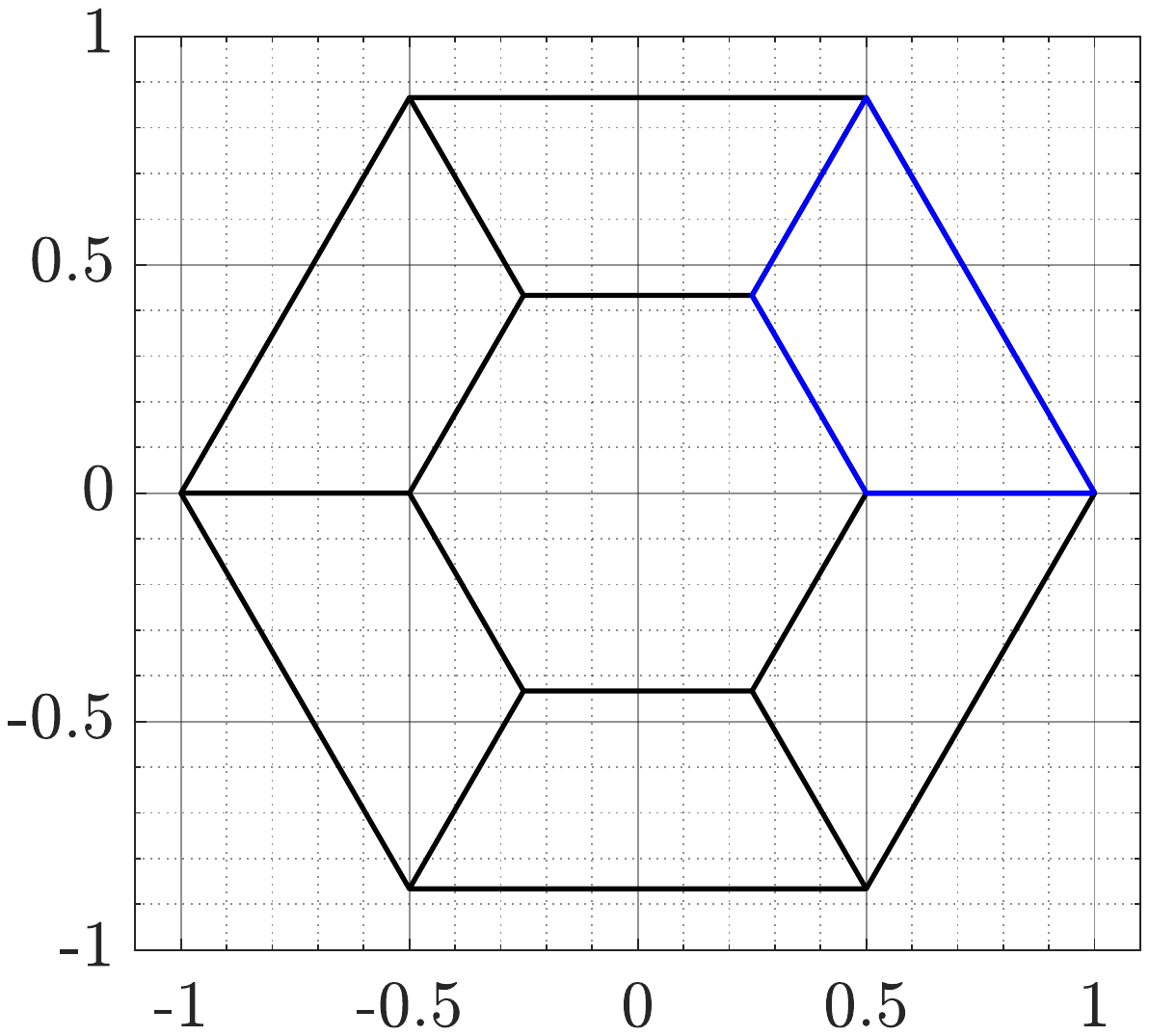}}
\hfill
\scalebox{0.6}{\includegraphics[trim=4cm 8cm 4cm 8cm,clip]{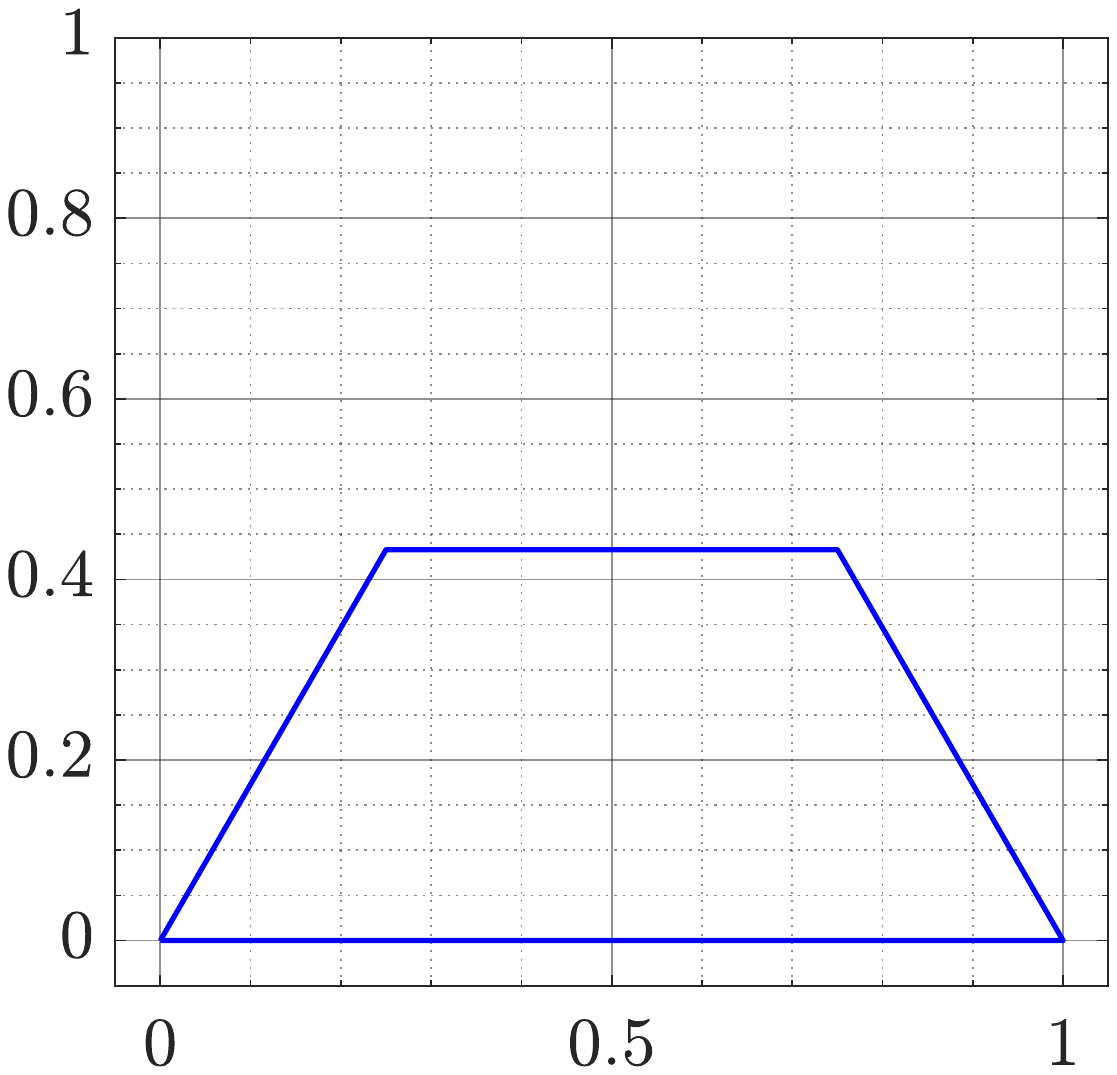}}
}
\caption{Isosceles trapezoid quadrilaterals for $m=6$ and $\lambda=0.5$.}
\label{fig:reg-plg}
\end{figure}

To compute $\M(\Gamma_1)$, we use the linear transformation
\[
z\mapsto \frac{z-1}{e^{2\pi\i\slash m}-1}
\]
to map the quadrilateral $P_1$ onto the quadrilateral $Q_1$ with the vertices $0$, $1$, $A$, and $B$, where (see Figure~\ref{fig:reg-plg} (right))
\[
A=\frac{\lambda e^{2\pi\i\slash m}-1}{e^{2\pi\i\slash m}-1}
=\frac{1+\lambda}{2}+\i\frac{1-\lambda}{2}\cot\frac{\pi}{m}
\quad {\rm and}\quad 
B=\frac{\lambda -1}{e^{2\pi\i\slash m}-1}
=\frac{1-\lambda}{2}+\i\frac{1-\lambda}{2}\cot\frac{\pi}{m}.
\]
Then $\M(\Gamma_1)=1/\M(Q_1)$, and hence 
\[
\capa\left(P,\bigcup^m_{j=1}I_j\right)=\frac{m}{\M(Q_1)}.
\]
The values of $\M(Q_1)$ are computed using the above MATLAB function \verb|QM| and these values are considered as exact values. The values of $\capa(P,\cup^m_{j=1}I_j)$ are computed numerically using the MATLAB function \verb|annq|. The absolute error between the values of $m/\M(Q_1)$ and the approximate values of $\capa(P,\cup^m_{j=1}I_j)$ for several values of $m$ and $\lambda$ are presented in Figure~\ref{fig:quad-reg-err}. Table~\ref{tab:quad-reg-err} presents the values of $\capa(P,\cup^m_{j=1}I_j)$ obtained with the function \verb|annq| for several values of $m$ and $\lambda$. For $m=\infty$, the outer boundary of $P$ reduces to the unit circle $|z|=1$ and the inner boundary reduces to the circle $|z|=\lambda$ and hence the capacity is $2\pi/\log(1/\lambda)$.

\begin{figure}[H] %
\centerline{
\scalebox{0.6}{\includegraphics[trim=3cm 8.5cm 3cm 8.5cm,clip]{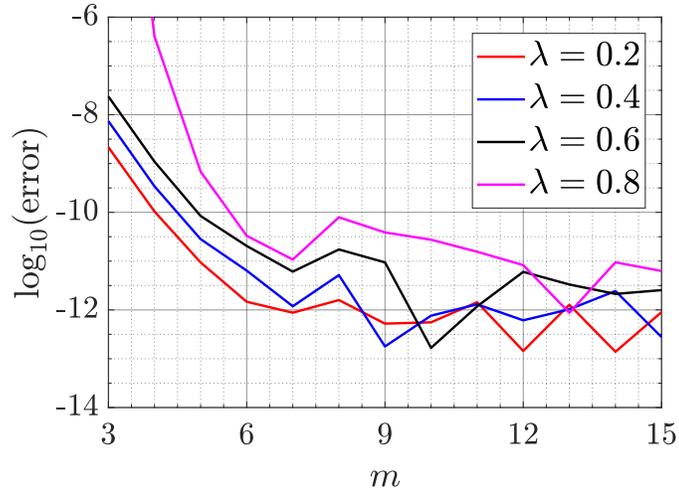}}
}
\caption{The absolute error between the values of $m/\M(Q_1)$ and $\capa(P,\cup^m_{j=1}I_j)$.}
\label{fig:quad-reg-err}
\end{figure}

\begin{table}[hbt]
\caption{The values of $\capa(P,\cup^m_{j=1}I_j)$ for several values of $m$ and $\lambda$.}
\label{tab:quad-reg-err}%
\begin{tabular}{l|l|l|l|l}\hline
$m\backslash \lambda$ & $0.2$ & $0.4$           & $0.6$           & $0.8$    \\ \hline
$3$   &$4.62006340262352$ &$8.97678254687922$ &$17.6373173090395$ &$43.6180795365658$ \\
$4$   &$4.13448702413021$ &$7.5615315394701$  &$14.2348796747326$ &$34.2349151937253$ \\
$5$   &$4.01100862917758$ &$7.18779448149662$ &$13.2642395249519$ &$31.4282920582438$ \\
$6$   &$3.96305925819319$ &$7.04023992761479$ &$12.855048267353$  &$30.1775839605043$ \\
$7$   &$3.94020862840696$ &$6.96948190383544$ &$12.6488960100174$ &$29.5090017429693$ \\
$8$   &$3.92785260938096$ &$6.9311587605797$  &$12.5332808502239$ &$29.1111436472386$ \\
$9$   &$3.92056195402897$ &$6.90855032892899$ &$12.4634416665474$ &$28.8568206239776$ \\
$10$  &$3.91597345356159$ &$6.89433186945894$ &$12.4188286559575$ &$28.6856579890056$ \\
$\infty$  
&$3.90396253166234$ 
&$6.85719618087606$ 
&$12.3000589924555$
&$28.1575930389859$\\
\hline %
\end{tabular}
\end{table}
           
\end{nonsec}

\begin{nonsec}{\bf Modulus of trapezium.}\label{sec:trapezium}
Let the convex quadrilateral $P_j$ and the segments $E_j$ and $I_j$, $j=1,2,\ldots,m$, be as described in Subsection~\ref{polyringdomain}. 
Here, for $j=1,2,\ldots,m$, we choose
\[
a_j=(3-0.5\lambda_j) e^{\i\theta_j}, \quad \theta_j=(j-1.2+0.4\tau_j)\frac{2\pi}{m},
\]
\[
b_j=(1+0.5\hat\lambda_j) e^{\i\hat\theta_j}, \quad \hat\theta_j=(j-1.2+0.4\hat\tau_j)\frac{2\pi}{m},
\]
where $\lambda_j$, $\hat\lambda_j$ $\tau_j$, and $\hat\tau_j$ are random real numbers in $(0,1)$.
Assume $E_j=[a_{j-1},a_j]$ and  $I_j=[b_{j-1},b_j]$, where $a_0=a_m$ and $b_0=b_m$. 
With the linear transformation
\[
z\mapsto (z-a_{j-1})/(a_j-a_{j-1}),
\]
the quadrilateral $P_j$ is mapped onto the quadrilateral $Q_j$ with the vertices $0$, $1$, $A_j$, and $B_j$ where $A_j=(b_j-a_{j-1})/(a_j-a_{j-1})$ and $B_j=(b_{j-1}-a_{j-1})/(a_j-a_{j-1})$, $j=1,2,\ldots,m$ (see Figure~\ref{fig:plg}). 
Let $\Gamma_j=\Delta(I_j,E_j;P_j)$ so that 
\[
\M(\Gamma_j)=1/\M(Q_j), \quad j=1,2,\ldots,m.
\]
Hence, by Lemma~\ref{lem_Pjcapbound}, we have
\begin{equation}\label{eq:cap-Qj}
\capa\left(P,\bigcup^m_{j=1}I_j\right)\ge\sum_{j=1}^m\frac{1}{\M(Q_j)}.
\end{equation}

For several values of $m$, the values of $\M(Q_j)$ are computed using the MATLAB function \verb|QM| and the values of $\capa(P,\cup^m_{j=1}I_j)$ are computed numerically using the MATLAB function \verb|annq|.
The obtained numerical results are presented in Figure~\ref{fig:quad-plg-err}.
These results validate the inequality~\eqref{eq:cap-Qj}. 

\begin{figure}[H] %
\centerline{
\scalebox{0.6}{\includegraphics[trim=3cm 8.5cm 3cm 8.5cm,clip]{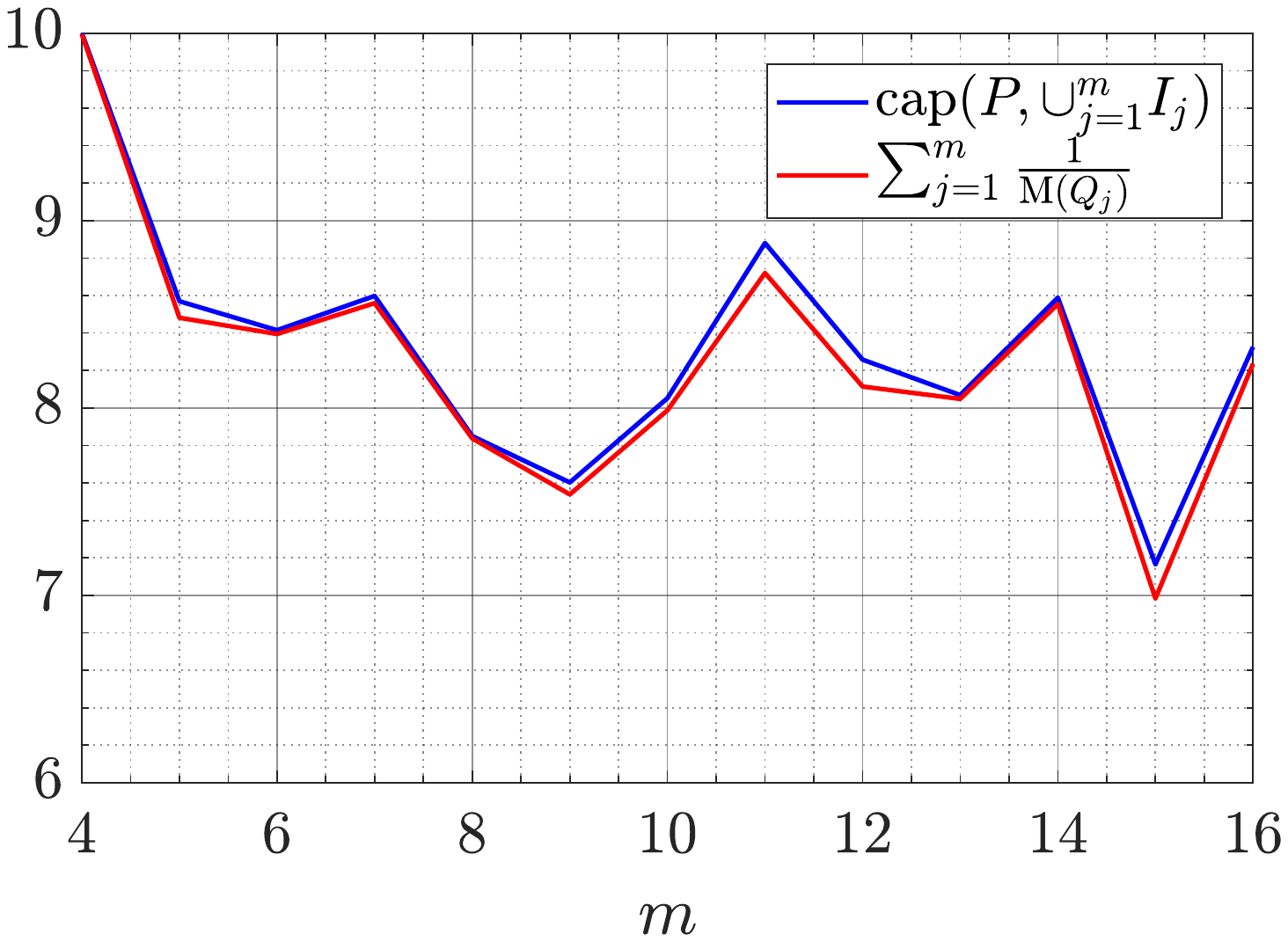}}
}
\caption{The values of $\sum_{j=1}^m(1/\M(Q_j))$ and $\capa(P,\cup^m_{j=1}I_j)$.}
\label{fig:quad-plg-err}
\end{figure}

\end{nonsec}

\begin{nonsec}{\bf Convex Euclidean polygons.}\label{sec:conv-plg}
Assume that $E\subset\B^2$ is a convex Euclidean polygonal closed region such that $\partial E$ is a polygon with $m$ vertices $v_{1}$, $v_{2}$, $\ldots$, $v_{m}$ where $m$ is an integer chosen randomly such that $3\le m\le 12$. We choose a real number $s$ randomly such that $0.05< s< 0.95$, then we assume the vertices are given by
\begin{equation}\label{eq:conv-thteta}
v_{j} = s\,e^{\i \theta_j}, \quad \theta_j = \left(j-1.25+0.5\tau_j\right)\frac{2\pi}{m}, \quad j=1,2,\ldots,m,
\end{equation}
where $\tau_j$ is a random number on $(0,1)$ (see Figure~\ref{fig:euc-plg-conv} for $m=5$ and $s=0.6973$). The values of $L=\mbox{h-perim}_{\B^2}(\partial E)$ are computed by the method presented in Subsection~\ref{sec:hyp-diam}.
Let $D=B^2(0,R)$ be the Euclidean disk with $\mbox{h-perim}_{\B^2}(\partial D)=L$. Then, it follows from~\eqref{eq:disk-cap-P} that
%
\begin{equation}\label{eq:cap-D}
\capa(\B^2,D) = \frac{2\pi}{\log\left(\sqrt{1+(2\pi/L)^2}+2\pi/L\right)}.
\end{equation}
Finally, let $I=[0,a]$, where $a$ is a positive real number such that $\mbox{h-perim}_{\B^2}(I)=L$, i.e., $I$ has the same hyperbolic perimeter as $\partial E$ as well as $\partial D$. Since the hyperbolic length of $I$ is $2\arth(a)$, the hyperbolic perimeter of $I$ is $4\arth(a)$ and the number $a$ must satisfy $4\arth(a)=L$. Thus, the number $a$ is given by $a = \tth(L/4)$.
Hence, by~\eqref{eq:seg-cap-s}, the exact value of $\capa(\B^2,I)$ is known and it is given by
\begin{equation}\label{eq:cap-I}
\capa(\B^2,I) = \frac{2\pi}{\mu(a)}  = \frac{2\pi}{\mu(\tth(L/4))}.
\end{equation}
The capacity $\capa(\B^2,E)$ is calculated using the MATLAB function \verb|annq|. 

\begin{figure}[H] %
\centerline{
\scalebox{0.6}{\includegraphics[trim=0cm 0cm 0cm 0cm,clip]{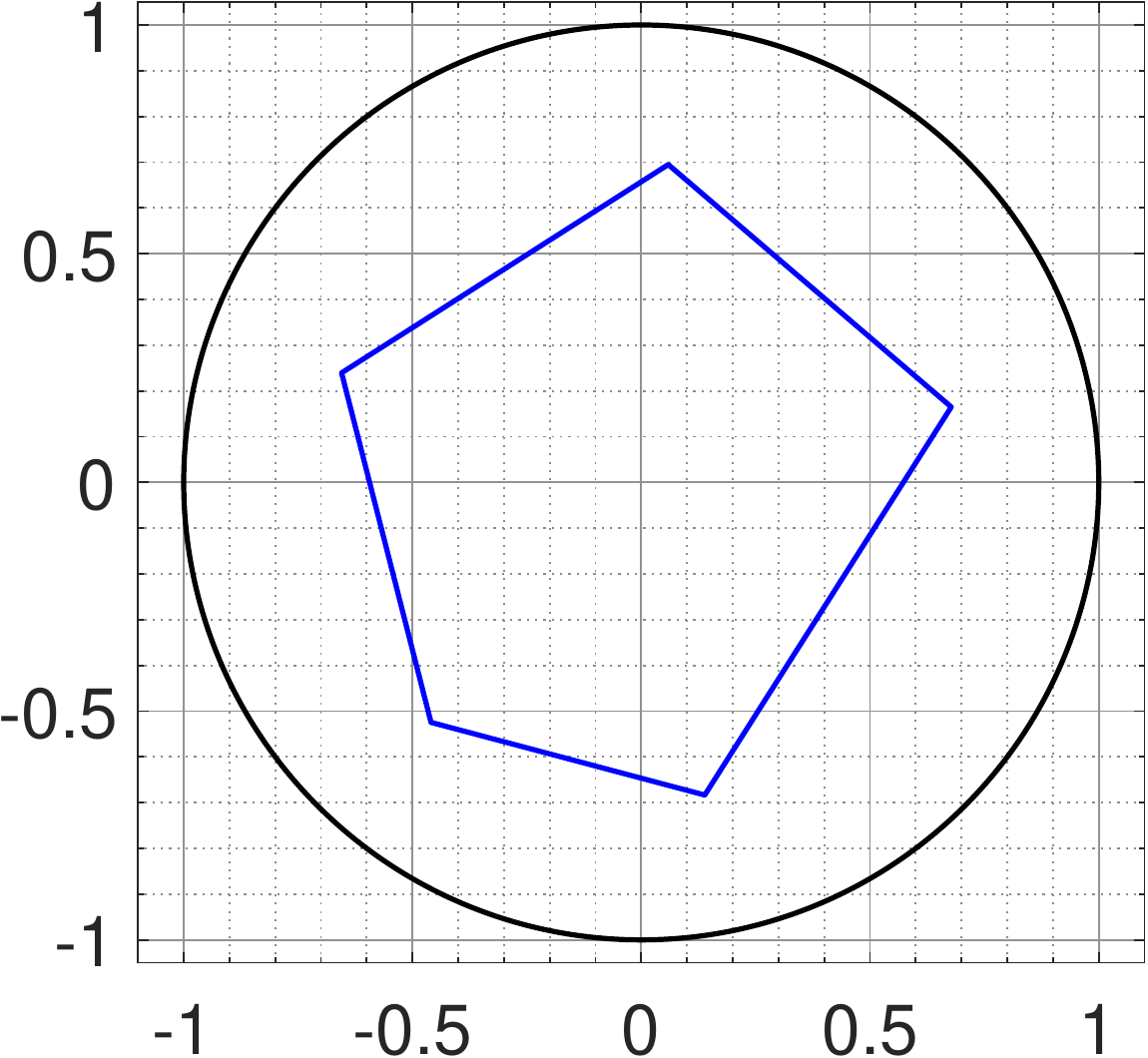}}
}
\caption{ A convex polygonal domain for $m=5$ and $s=0.6973$.}
\label{fig:euc-plg-conv}
\end{figure}

The values of the capacities $\capa(\B^2,E)$, $\capa(\B^2,D)$, and $\capa(\B^2,I)$ versus the hyperbolic perimeter $L$ where $L=\mbox{h-perim}_{\B^2}(\partial E)=\mbox{h-perim}_{\B^2}(\partial D)=\mbox{h-perim}_{\B^2}(I)$, are given in Figure~\ref{fig:euc-plg-conv-res1}. These values are computed for $200$ random values of $m$ and $s$. It is clear from Figure~\ref{fig:euc-plg-conv-res1} that 
\begin{equation}\label{eq:ineq-EDI}
\capa(\B^2,I)\le\capa(\B^2,E)\le\capa(\B^2,D),
\end{equation}
for the above described convex set $E$. These results agree with Theorems~\ref{KuhnauThm}
 and~\ref{fwg71}.

\begin{figure}[H] %
\centerline{
\scalebox{0.6}{\includegraphics[trim=0cm 0cm 0cm 0cm,clip]{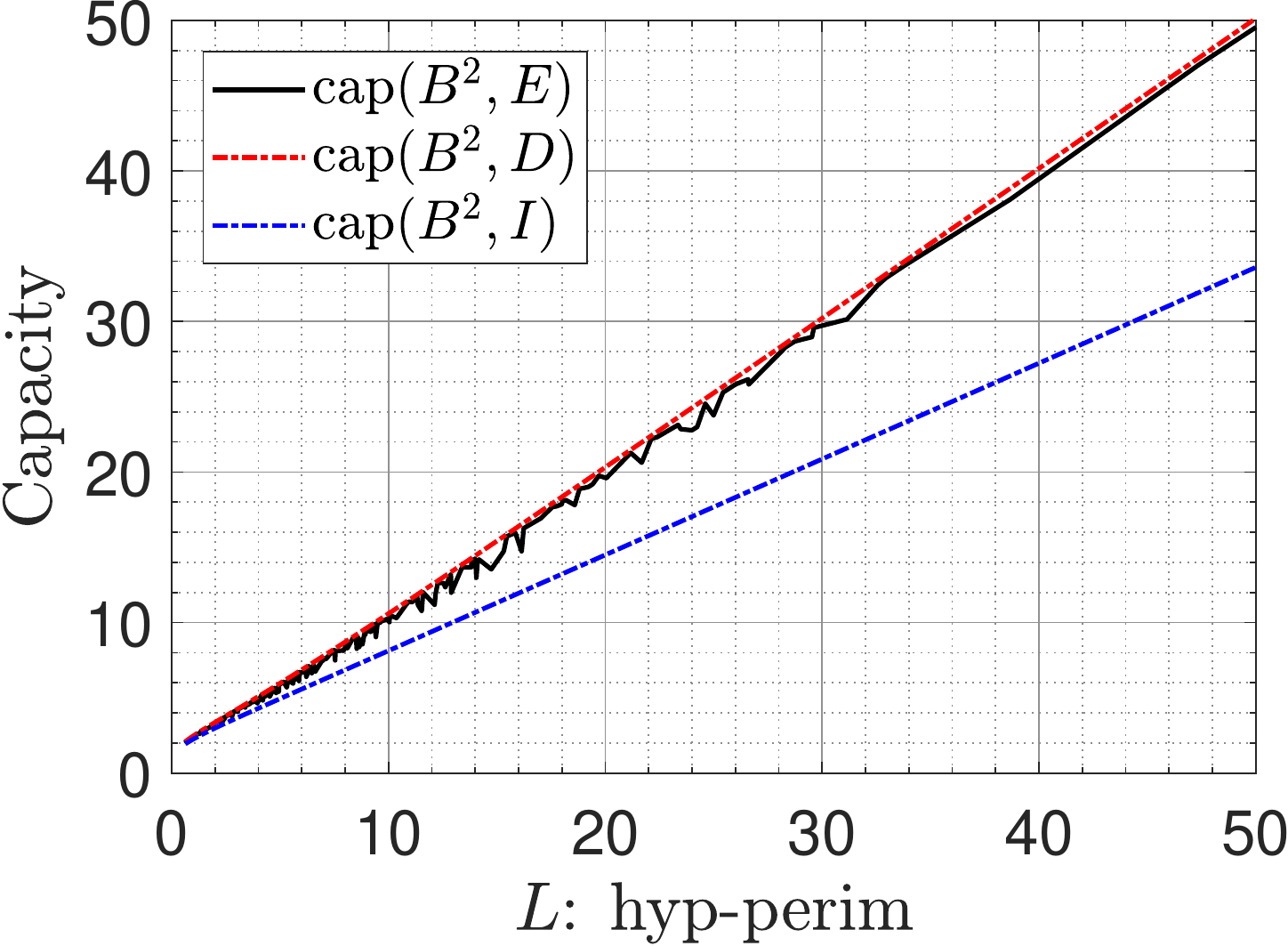}}
}
\caption{The capacities $\capa(\B^2,E)$, $\capa(\B^2,D)$, and $\capa(\B^2,I)$  for the convex Euclidean polygonal domain $E$.}
\label{fig:euc-plg-conv-res1}
\end{figure}

\end{nonsec}

\begin{nonsec}{\bf Hyperbolically convex polygons.}
Assume that $E\subset\B^2$ is a hyperbolically convex polygonal closed region such that $\partial E$ is a hyperbolic polygon with $m$ vertices $v_{1}$, $v_{2}$, $\ldots$, $v_{m}$ chosen as in~\eqref{eq:conv-thteta} (see Figure~\ref{fig:hyp-plg-conv} for $m=10$ and $s=0.9308$). 

\begin{figure}[H] %
\centerline{
\scalebox{0.6}{\includegraphics[trim=0cm 0cm 0cm 0cm,clip]{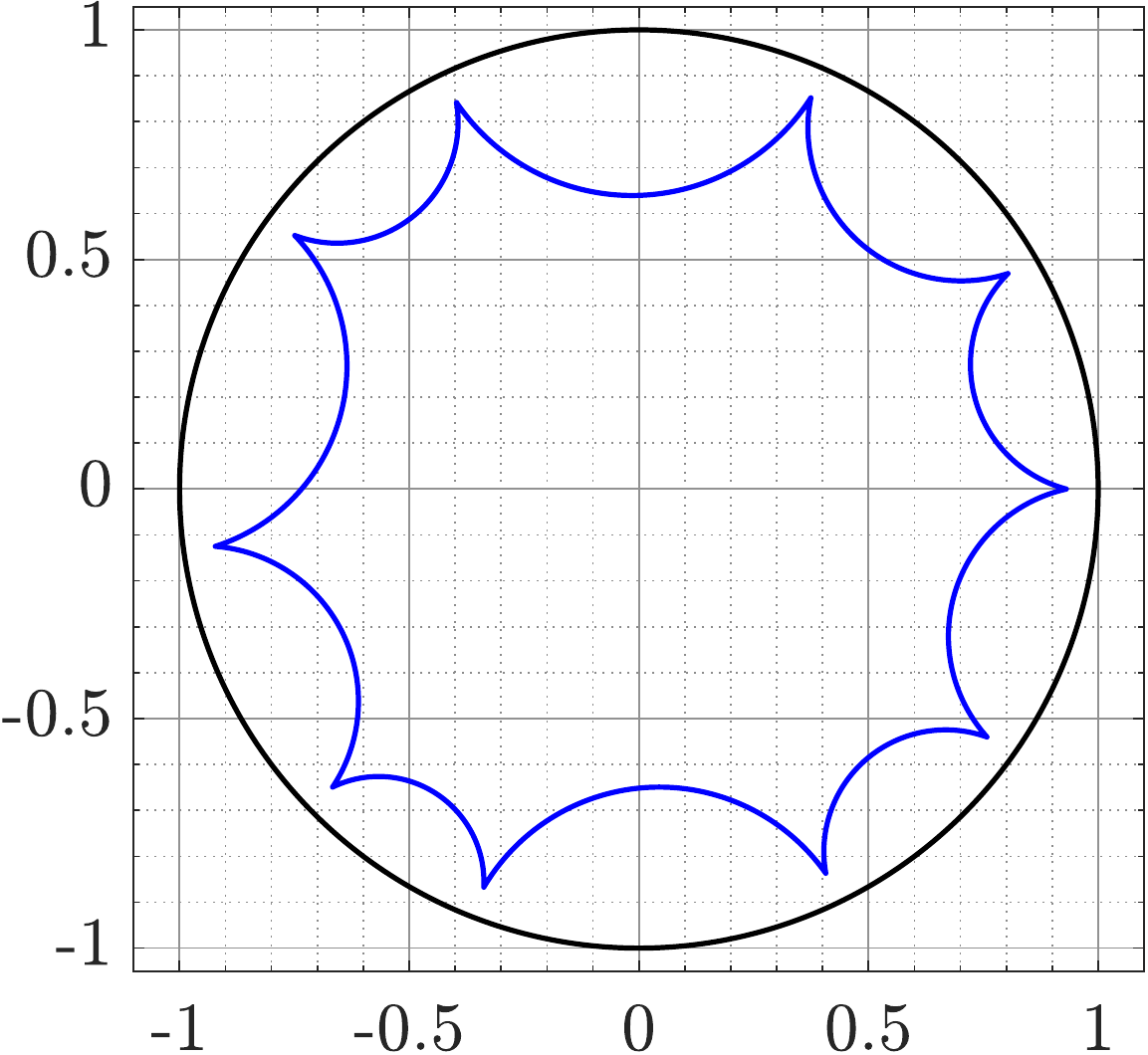}}
}
\caption{A hyperbolically convex polygonal domain for $m=10$ and $s=0.9308$.}
\label{fig:hyp-plg-conv}
\end{figure}

The values of $L=\mbox{h-perim}_{\B^2}(\partial E)$ are then computed by 
\[
L=\sum_{j=1}^{m}\rho_{\B^2}(v_{j-1},v_{j}),
\]
where $v_0=v_m$. Let $D=B^2(0,R)$ be the Euclidean disk with $\mbox{h-perim}_{\B^2}(\partial D)=L$ and let $I=[0,a]$ where $a$ is a positive real number such that $\mbox{h-perim}_{\B^2}(I)=L$. Then $\capa(\B^2,D)$ and $\capa(\B^2,I)$ are computed as in~\eqref{eq:cap-D} and~\eqref{eq:cap-I}, respectively. The capacity $\capa(\B^2,E)$ is computed using the MATLAB function \verb|annq|. 
The values of the capacities $\capa(\B^2,E)$, $\capa(\B^2,D)$, and $\capa(\B^2,I)$ versus the hyperbolic perimeter $L$ are given in Figure~\ref{fig:hyp-plg-conv-res}. These values are computed for $200$ random values of $m$ and $s$ in~\eqref{eq:conv-thteta}. The obtained results indicate that Theorem~\ref{KuhnauThm} is valid for the above described hyperbolically convex set $E$.

\begin{figure}[H] %
\centerline{
\scalebox{0.6}{\includegraphics[trim=0cm 0cm 0cm 0cm,clip]{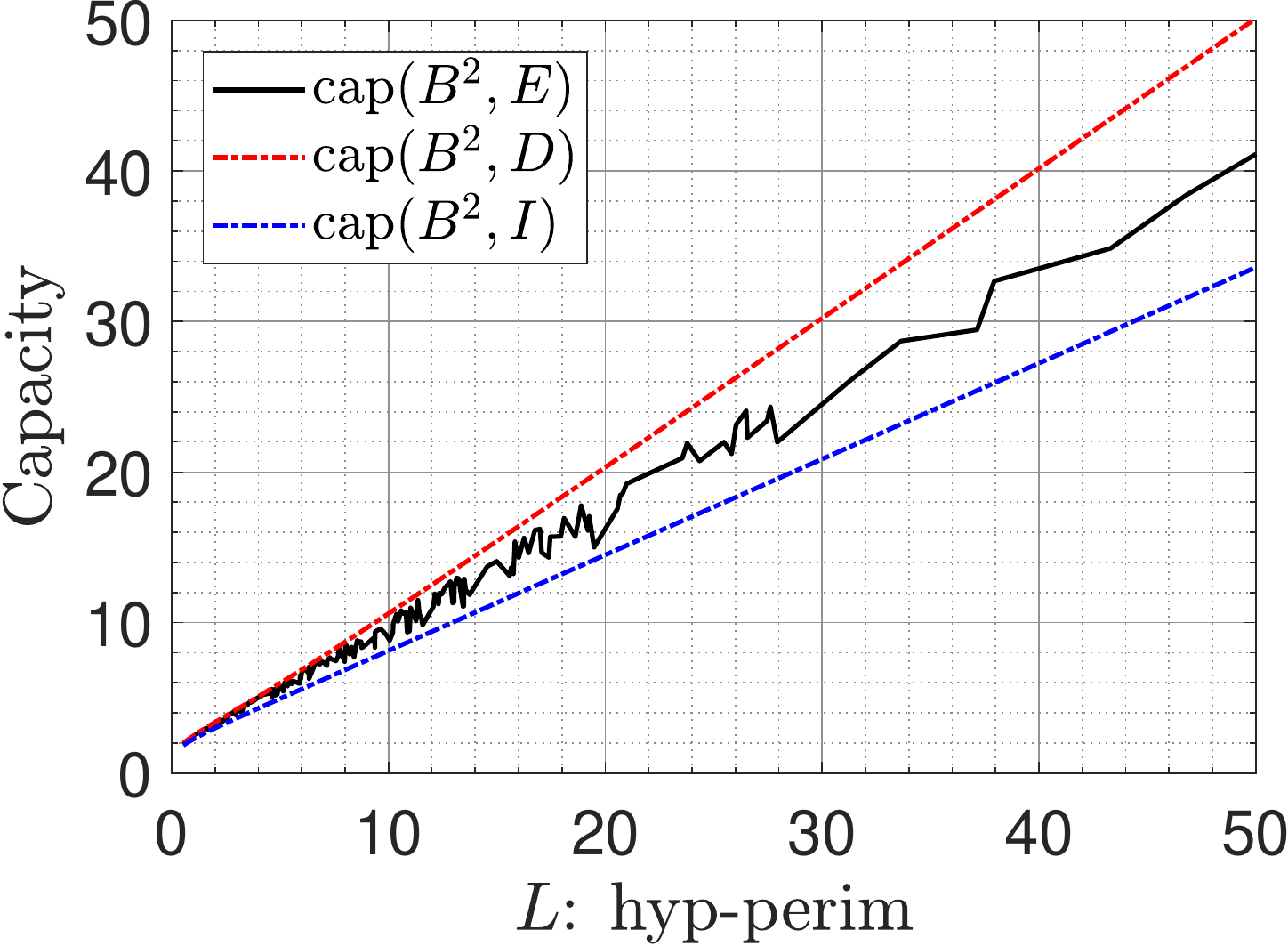}}
}
\caption{The capacities $\capa(\B^2,E)$, $\capa(\B^2,D)$, and $\capa(\B^2,I)$ for the convex hyperbolic polygonal domain $E$.}
\label{fig:hyp-plg-conv-res}
\end{figure}

\end{nonsec}

\begin{nonsec}{\bf Nonconvex Euclidean polygons.}
Assume that $E\subset\B^2$ is a nonconvex Euclidean polygonal closed region such that $\partial E$ is a polygon with $m$ vertices $v_{1}$, $v_{2}$, \ldots, $v_{m}$, where $m$ is an integer chosen randomly such that $3\le m\le 12$ and $\theta_1,\theta_2,\ldots,\theta_m$ are as in~\eqref{eq:conv-thteta}. Then, we assume that the vertices are given by
\[
v_{j} = s_j\,e^{\i \theta_j},
\]
where $s_j$ is chosen randomly such that $0.5<s_{2j-1}< 0.95$ and $0.05<s_{2j}< 0.5$ (see Figure~\ref{fig:euc-plg-nconv} for $m=12$). 

\begin{figure}[H] %
\centerline{
\scalebox{0.6}{\includegraphics[trim=0cm 0cm 0cm 0cm,clip]{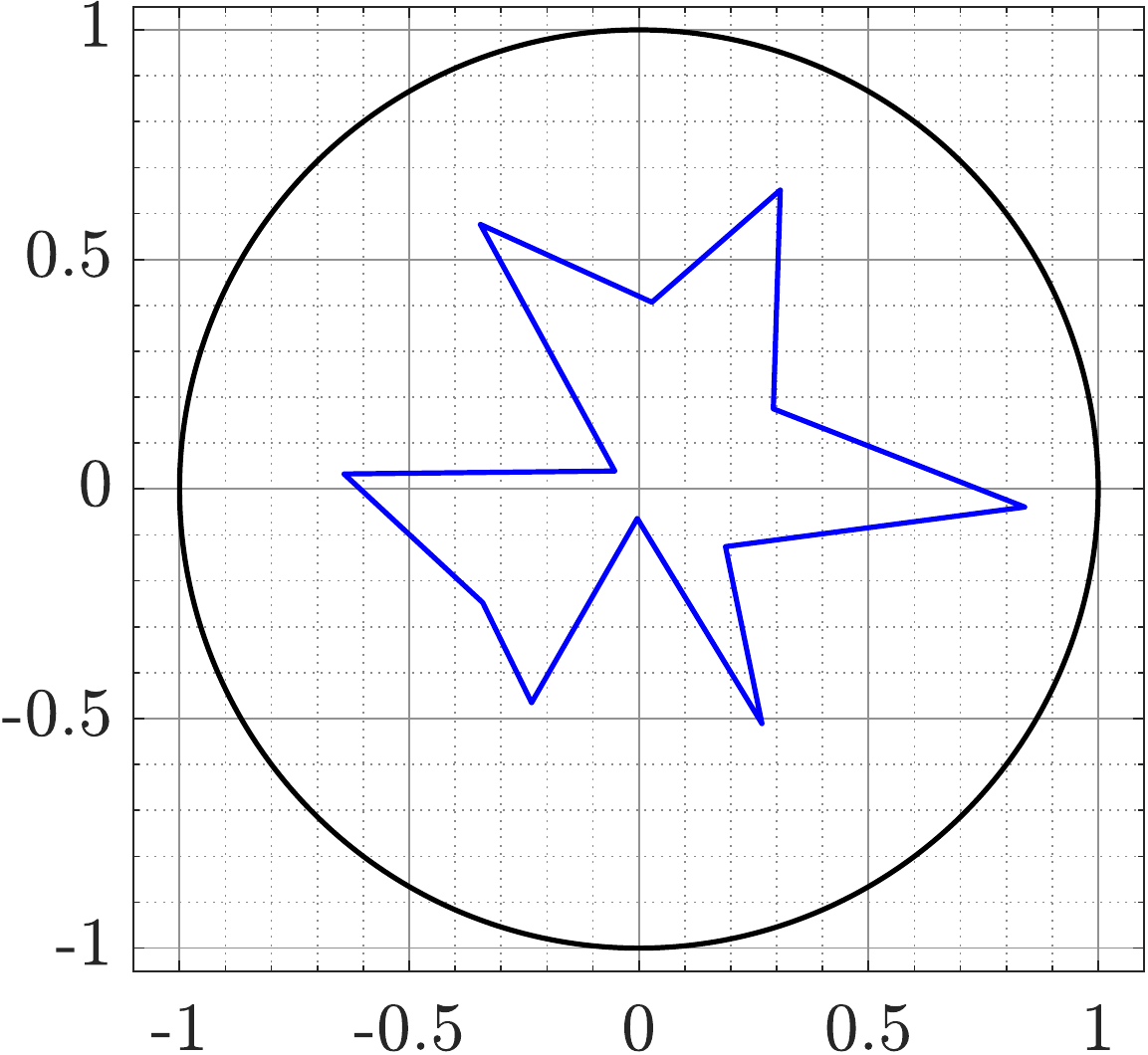}}
}
\caption{A nonconvex polygonal domain for $m=12$.}
\label{fig:euc-plg-nconv}
\end{figure}

The values of $L=\mbox{h-perim}_{\B^2}(\partial E)$ are computed by the method presented in Subsection~\ref{sec:hyp-diam}.
Let $D=B^2(0,R)$ be the Euclidean disk with $\mbox{h-perim}_{\B^2}(\partial D)=L$ and let $I=[0,a]$ where $a$ is a positive real number such that $\mbox{h-perim}_{\B^2}(I)=L$. 
The capacities $\capa(\B^2,D)$ and $\capa(\B^2,I)$ are computed as in~\eqref{eq:cap-D} and~\eqref{eq:cap-I}, respectively, and the capacity $\capa(\B^2,E)$ is computed using the MATLAB function \verb|annq|. 
The values of these capacities versus the hyperbolic perimeter $L$ are given in Figure~\ref{fig:euc-plg-conv-res}. These values are computed for $200$ random values of $m$. The obtained results show that 
Theorem~\ref{KuhnauThm} is not valid if $E$ is nonconvex.

\begin{figure}[H] %
\centerline{
\scalebox{0.6}{\includegraphics[trim=0cm 0cm 0cm 0cm,clip]{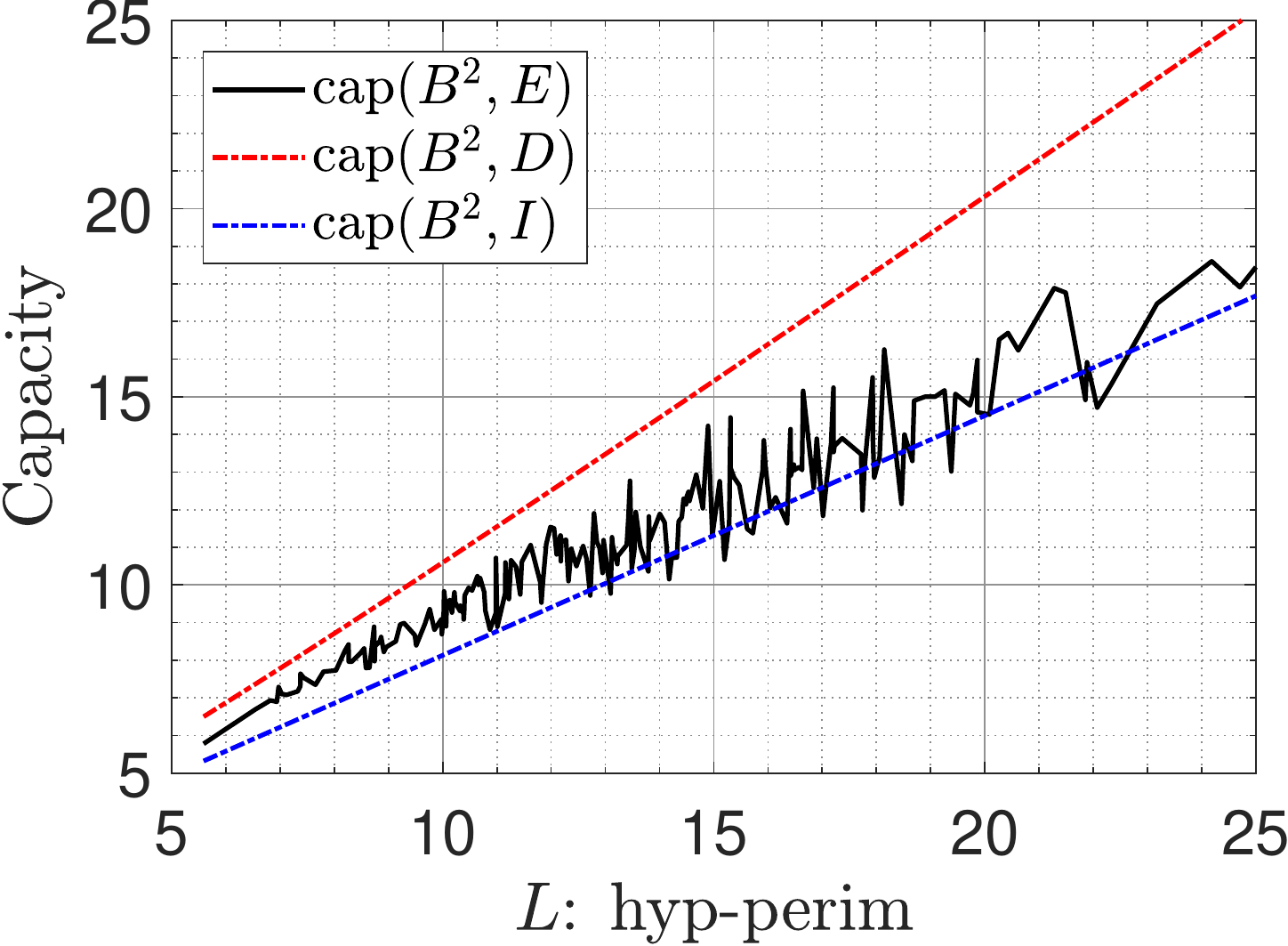}}
}
\caption{The capacities $\capa(\B^2,E)$, $\capa(\B^2,D)$, and $\capa(\B^2,I)$  for the nonconvex Euclidean polygonal domain $E$.}
\label{fig:euc-plg-conv-res}
\end{figure}

\end{nonsec}

\begin{nonsec}{\bf Nonconvex set $E(t)$.}
For a fixed $\theta\in(0,\pi\slash2)$ and a fixed $r\in(0,1)$, let $E=\{re^{si}:\theta\leq s\leq2\pi\}$. Let $t$ be so small that $\rho_{\B^2}(r, re^{\theta i})\ge3t$. We define 
\[
E(t)=\{z: \text{hypdist}(z,E)\le t\}\subset\B^2
\]
and $f(t) = \mbox{h-perim}_{\B^2}(E(t))$ (see Figure~\ref{fig:fig-E(t)}).
Since
\[
\rho_{\B^2}(r, re^{\theta i})=2\arsh\frac{2r\sin(\theta/2)}{1-r^2},
\]
we choose $t$ such that $0<3t\le2\arsh(2r\sin(\theta/2)/(1-r^2))$. 

\begin{figure}[H] %
\centerline{
\scalebox{0.6}{\includegraphics[trim=0cm 0cm 0cm 0cm,clip]{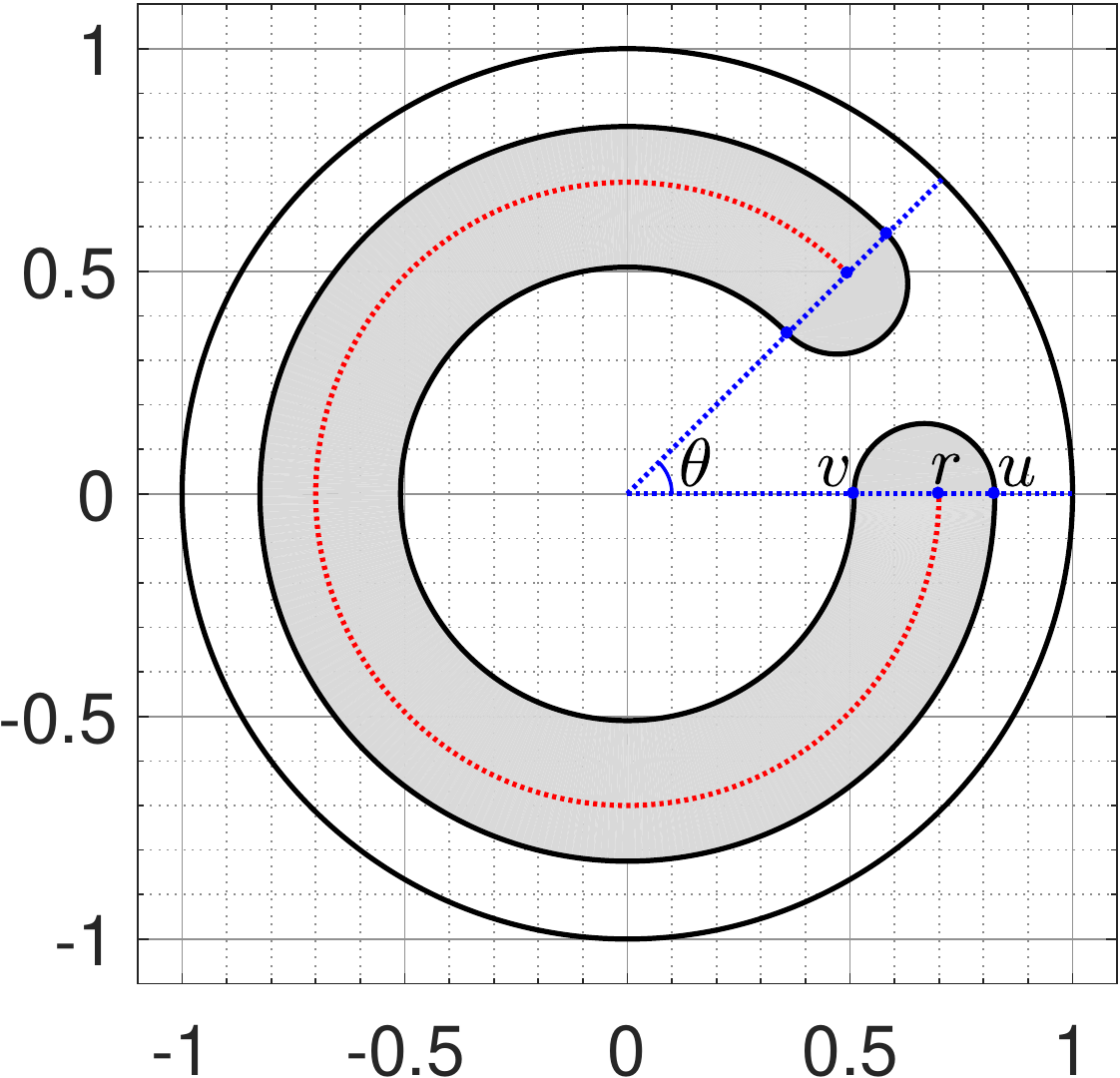}}
}
\caption{The nonconvex set $E(t)$ for $t=\frac{2}{3}\arsh(2r\sin(\theta/2)/(1-r^2))$ where $\theta=\pi/4$ and $r=0.6$.}
\label{fig:fig-E(t)}
\end{figure}

The boundary of $E(t)$ consists of two circular arcs and two hyperbolic half-circles.
Let the radius of the two circular arcs be $u$ and $v$ where $0<v<r<u$ and $\rho_{\B^2}(r,v)=\rho_{\B^2}(r,u)=t$.
Let
\[
\hat v = 2\arth(v), \quad \hat r = 2\arth(r), \quad \hat u = 2\arth(u), 
\]
then $\hat u - \hat r = \hat r - \hat v=t$, and hence
$\hat u = \hat r + t$ and $\hat v = \hat r - t$. Thus
\[
u = \th\left(\frac{\hat u}{2}\right) = \th\left(\frac{\hat r + t}{2}\right)
 = \th\left(\!\arth(r)+\frac{t}{2}\right)
\]
and
\[
v = \th\left(\frac{\hat v}{2}\right) = \th\left(\frac{\hat r - t}{2}\right)
 = \th\left(\!\arth(r)-\frac{t}{2}\right).
\]
The two circular arcs have the hyperbolic center $0$ and the hyperbolic radii $\hat u$ and $\hat v$. 
Hence, by Lemma~\ref{perimBe}, the total hyperbolic length of these two circular arcs is 
\[
\frac{2\pi-\theta}{2\pi}\times 2\pi(\sh(\hat u)+\sh(\hat v))
=(2\pi-\theta)(\sh(\hat u)+\sh(\hat v))
=(2\pi-\theta)\left(\frac{2u}{1-u^2}+\frac{2v}{1-v^2}\right),
\]
which can be simplified as
\[
(2\pi-\theta)\frac{4r\,\ch(t)}{1-r^2}.
\]
The two hyperbolic half-circles have hyperbolic centers $r,r^{\i\theta}$ and a hyperbolic radius $t$. Thus, by Lemma~\ref{perimBe}, the total hyperbolic length of these two hyperbolic half-circles is $2\pi\sh(t)$.
Hence, the hyperbolic perimeter of the set $E(t)$ is
\[
f(t) = 2\pi\sh(t)+\frac{4r(2\pi-\theta)\ch(t)}{1-r^2}.
\]

Finally, let $I(t)=[0,a(t)]$ where $a(t)$ is a positive real number such that $\mbox{h-perim}_{\B^2}(I(t))=f(t)$, i.e., $a(t) = \tth(f(t)/4)$. Thus, as in~\eqref{eq:cap-I},
\begin{equation}\label{eq:cap-I-f}
\capa(\B^2,I(t)) = \frac{2\pi}{\mu(\tth(f(t)/4))}.
\end{equation}
Now, we can compute numerically $\capa(\B^2, E(t))$ using the MATLAB function \verb|annq|.
These numerical results are presented in Figure~\ref{fig:cap-E(t)}, which shows that $\capa(\B^2,E(t))<\capa(\B^2,I(t))$ for $\theta=\pi/4$, $r=0.5$, and $0.05\le t\le\frac{2}{3}\arsh(2r\sin(\theta/2)/(1-r^2))$. For $\theta=\pi/4$ and $r=0.75$, $\capa(\B^2,E(t))<\capa(\B^2,I(t))$ for some values of $t$ and $\capa(\B^2,E(t))>\capa(\B^2,I(t))$ for other values of $t$. As in the previous example, this example shows that 
Theorem~\ref{KuhnauThm} is not applicable to nonconvex sets. 

\begin{figure}[H] %
\centerline{
\scalebox{0.6}{\includegraphics[trim=0cm 0cm 0cm 0cm,clip]{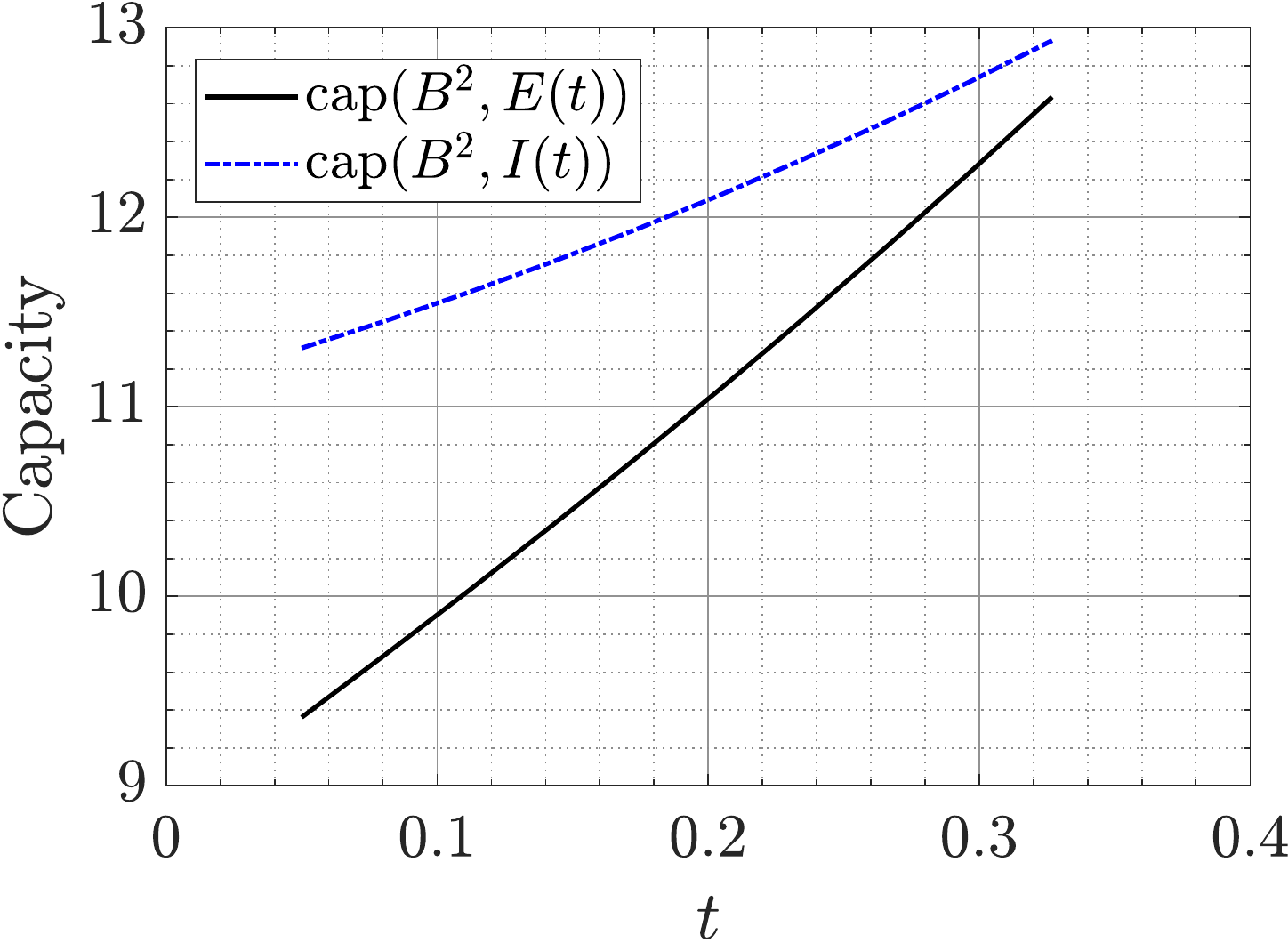}}
\hfill
\scalebox{0.6}{\includegraphics[trim=0cm 0cm 0cm 0cm,clip]{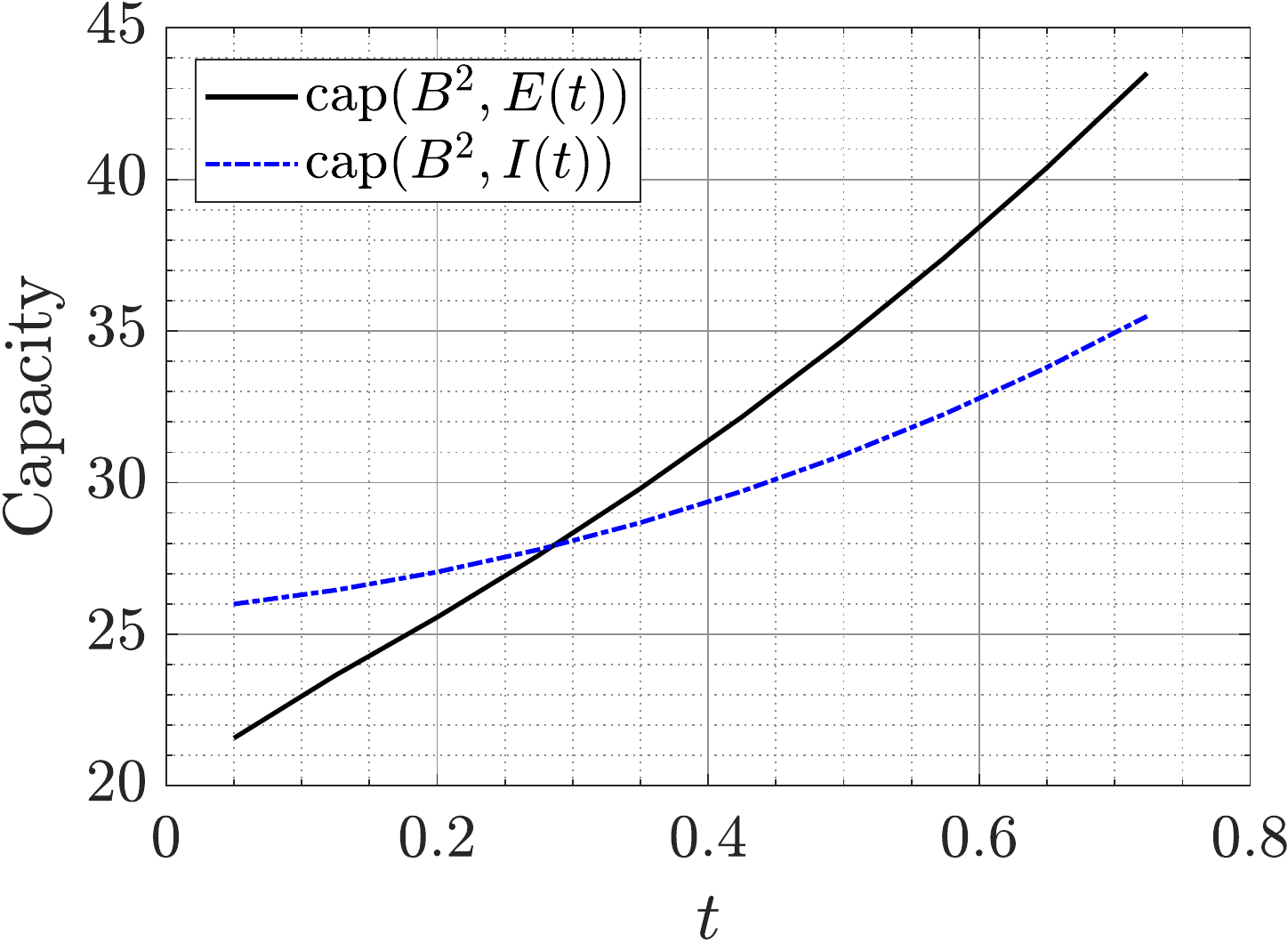}}
}
\caption{The capacity for the nonconvex set $E(t)$ for $r=0.5$ (left) and $r=0.75$ (right) where $\theta=\pi/4$ and $0.05\le t\le\frac{2}{3}\arsh(2r\sin(\theta/2)/(1-r^2))$.}
\label{fig:cap-E(t)}
\end{figure}

\end{nonsec}

\begin{nonsec}{\bf Polygon in polygon.}
Let $G$ be the simply connected domain interior to the polygon with the vertices $3\i$, $6+3\i$, $6+9\i$, $-6+9\i$, $-6-9\i$, $5-9\i$, $5-3\i$, and $-3\i$, and let $E(t)\subset G$ be the simply connected domain interior to the polygon with the vertices $-2-(6-t)\i$, $4-(6-t)\i$, $4-(6+t)\i$, $-4-(6+t)\i$, $-4+(6+t)\i$, $5+(6+t)\i$, $5+(6-t)\i$, and $-2+(6-t)\i$ for $0<t<3$ (see Figure~\ref{fig:plg-plg} for $t=1$). The values of the capacity $\capa(G, E(t))$ are calculated numerically for $0.01\le t\le 2$ using the MATLAB function \verb|annq| and the results are presented in Figure~\ref{fig:cap-plg-plg}. 


The hyperbolic perimeter of $E(t)$ with respect to the metric $\rho_G$ will be computed using the method described in Subsection~\ref{sec:hyp-diam}. Let $f(t)=\mbox{h-perim}_G(E(t))$ and let $D(t)=B^2(0,R(t))$ be the Euclidean disk with $\mbox{h-perim}_{\B^2}(\partial D(t))=f(t)$. Then, it follows from~\eqref{eq:disk-cap-P} that $\capa(\B^2, D(t))$ is given by
\[
\capa(\B^2,D(t)) = \frac{2\pi}{\log\left(\sqrt{1+(2\pi/f(t))^2}+2\pi/f(t)\right)}.
\]
The values of the capacities $\capa(G, E(t))$ and  $\capa(\B^2, D(t))$ for $0.01\le t\le 2$ are presented in Figure~\ref{fig:cap-plg-plg}. 
The polygonal domain here is of the type considered in Subsection~\ref{sec:trapezium} with $m=8$. Thus, a lower bound for the capacity $\capa(G,E(t))$ can be obtained from the inequality~\eqref{eq:cap-Qj}, where the quadrilaterals $Q_j$ are defined as in Subsection~\ref{sec:trapezium}. The computed values for this lower bound are presented in Figure~\ref{fig:cap-plg-plg}. As we can see from these results, the values of this lower bound become close to the values of $\capa(G,E(t))$ as $t$ increases.

\begin{figure}[H] %
\centerline{
\hfill
\scalebox{0.6}{\includegraphics[trim=0cm 0cm 0cm 0cm,clip]{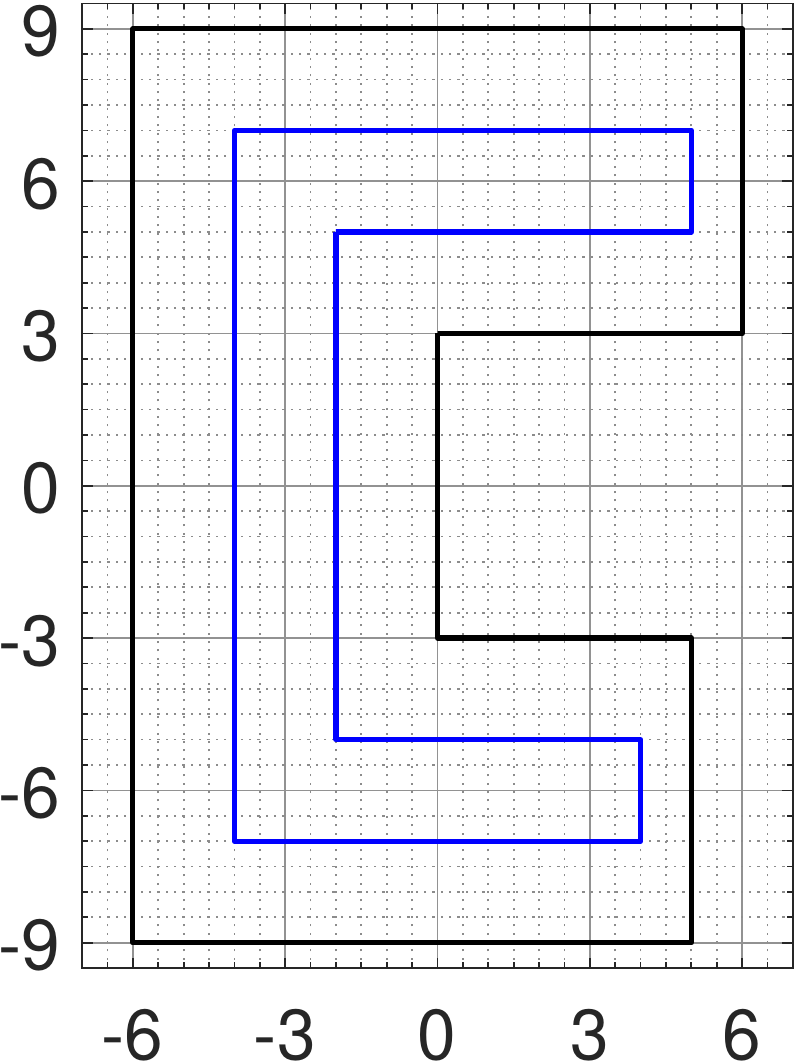}}
\hfill
\scalebox{0.6}{\includegraphics[trim=0cm 0cm 0cm 0cm,clip]{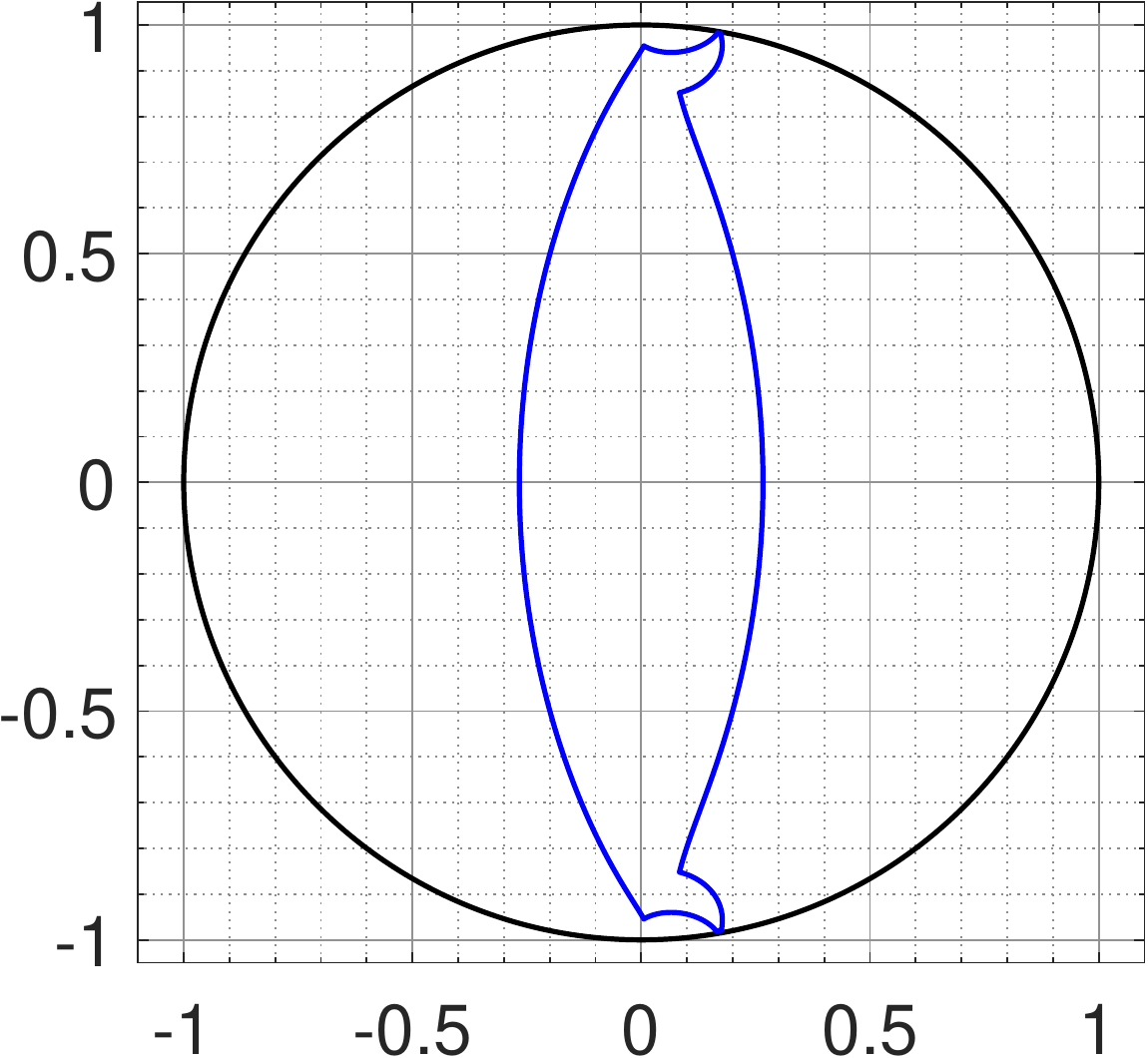}}
\hfill
}
\caption{On the left, the simply connected polygonal domain $E(t)$ inside the simply connected domain $G$ for $t=1$. On the right, the domain $G$ is mapped onto the unit disk and $E(t)$ is mapped onto a simply connected domain $\hat E(t)$ inside the unit disk.}
\label{fig:plg-plg}
\end{figure}

\begin{figure}[H] %
\centerline{
\scalebox{0.6}{\includegraphics[trim=0cm 0cm 0cm 0cm,clip]{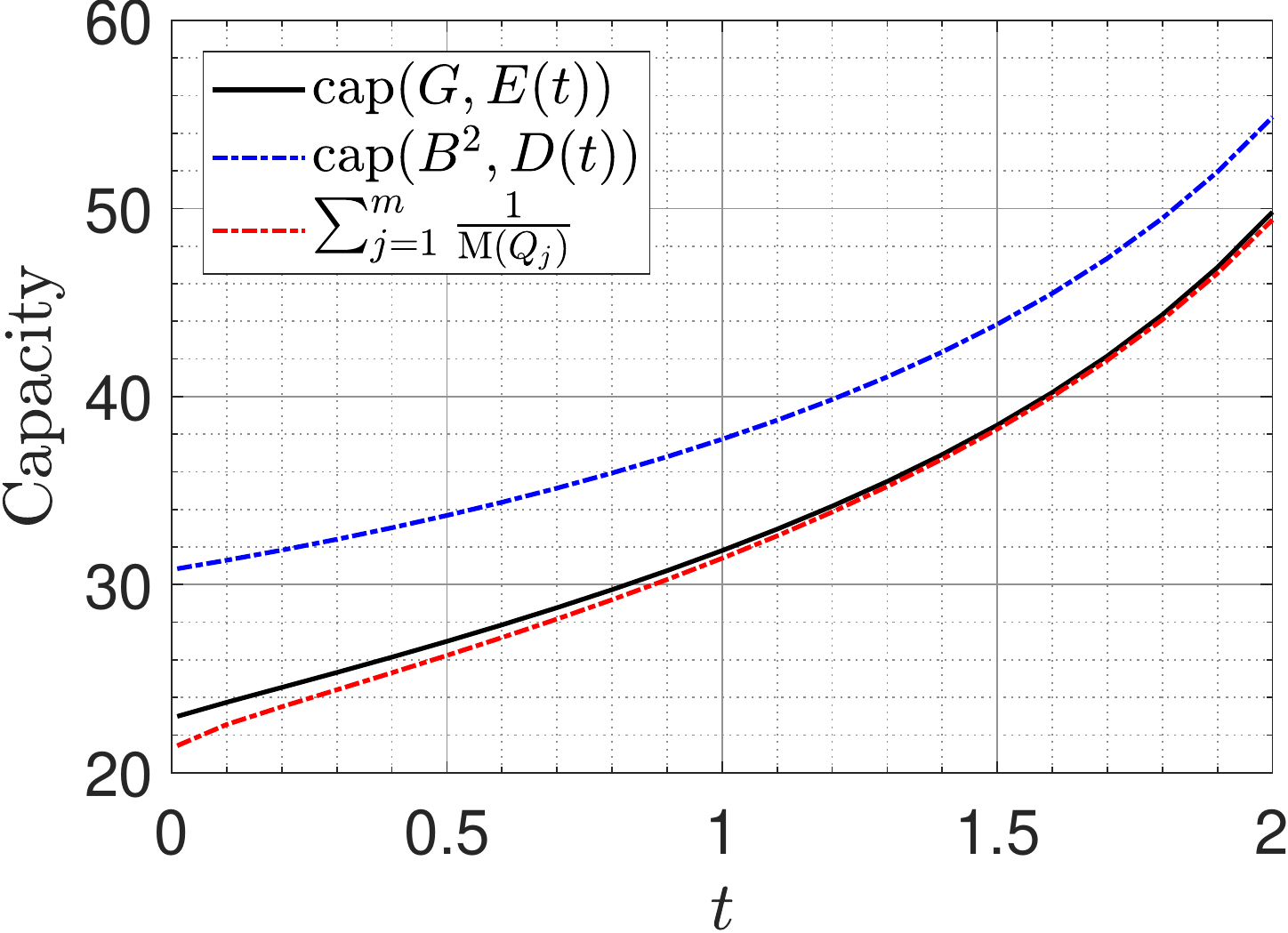}}
}
\caption{The values of the capacities $\capa(\B^2, E(t))$ with the lower and upper bounds.}
\label{fig:cap-plg-plg}
\end{figure}

\end{nonsec}

\begin{nonsec}{\bf Capacity of a half disk.}
We consider here the capacity
\begin{equation}\label{eq:halfdisk}
\capa(\B^2,E),\quad E=\overline{B}_\rho(x,t)\cap\{z\,:\,{\rm Im}z\geq0\},
\end{equation}
where $0<x<1$ and $t>0$ (see Figure~\ref{fig:halfdisk}). By Lemma \ref{perimBe}, the hyperbolic perimeter of $E$ is 
\begin{align*}
P\equiv\pi\,{\rm sh}t+2t.     
\end{align*}
We now give upper and lower bounds for $\capa(\B^2,E)$ using Theorems \ref{KuhnauThm} and~\ref{fwg71}, and Lemma~\ref{lem_capgamma}. Lemma \ref{lem_capgamma} based on
symmetrization yields
\begin{align}\label{boundFromcapgamma}
\capa(\B^2,E)\geq\gamma_2 ({1}/{{\rm th}t})=\frac{2\pi}{\mu({\rm th}t)},    
\end{align}
Theorem \ref{KuhnauThm} yields
\begin{align}\label{boundFromKuhnau}
\capa(\B^2,E)\geq\frac{2\pi}{\mu({\rm th}s)},\quad s={P}/{4} =(\pi\,{\rm sh}t+2t)/4,    
\end{align}
and Theorem \ref{fwg71} with Proposition \ref{capPerim}  yield
\begin{align}\label{boundFromfwg71}
\capa(\B^2,E)\leq\frac{2\pi}{\log((2\pi+\sqrt{P^2+(2\pi)^2})\slash P)}.   
\end{align}
The values of the capacity $\capa(\B^2,E)$ are calculated numerically for several values of $t$ using the MATLAB function \verb|annq|. The obtained results alongside the values of the bounds in~\eqref{boundFromcapgamma}, \eqref{boundFromKuhnau}, and~\eqref{boundFromfwg71} are given in Table~\ref{tab:halfdisk}. The values of the capacity $\capa(\B^2,E)$ should not depend on the values $x$ and this fact could be used to check the accuracy of the MATLAB function \verb|annq| used to compute $\capa(\B^2,E)$. It is clear from Table~\ref{tab:halfdisk} that the obtained values of $\capa(\B^2,E)$ for $x=0.5$ and $x=0.75$ are almost identical, where the absolute value of the differences between these values are: $1.0\times10^{-12}$ for $t=0.5$,  $1.1\times10^{-11}$ for $t=1$,  $2.1\times10^{-10}$ for $t=2$, and $1.8\times10^{-9}$ for $t=3$. For $x=0.75$, the boundary of the inner half circle becomes even closer to the outer boundary compared to the case $x=0.5$ especially for large $t$ (see Figure~\ref{fig:halfdisk}). Hence, the results obtained for $x=0.75$ will not be as accurate as for $x=0.5$ and this could explain the increase in the absolute value of the differences between the obtained values of $\capa(\B^2,E)$ for $x=0.5$ and $x=0.75$.

\begin{figure}[H] %
\centerline{
\hfill
\scalebox{0.6}{\includegraphics[trim=0cm 0cm 0cm 0cm,clip]{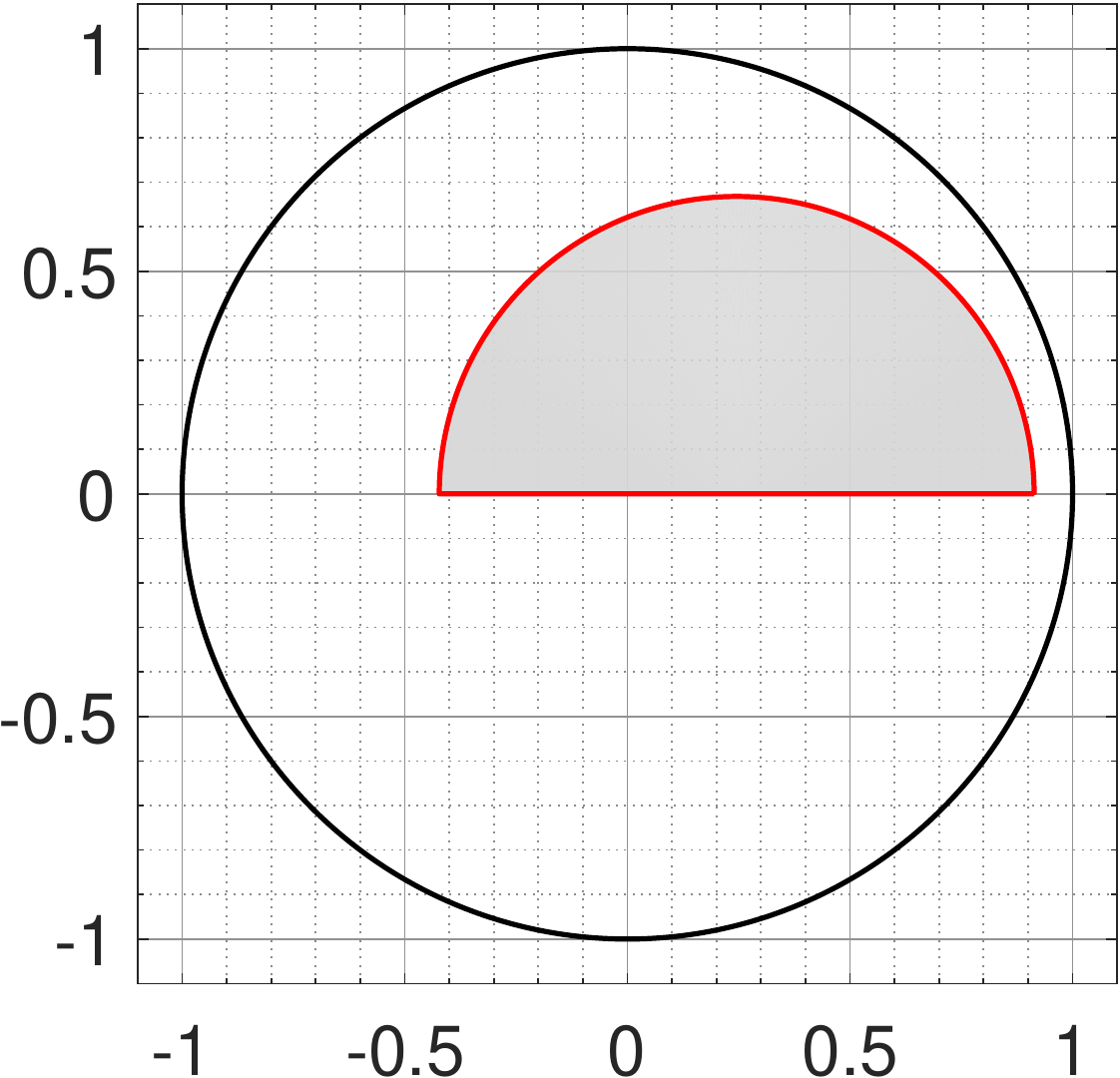}}
\hfill
\scalebox{0.6}{\includegraphics[trim=0cm 0cm 0cm 0cm,clip]{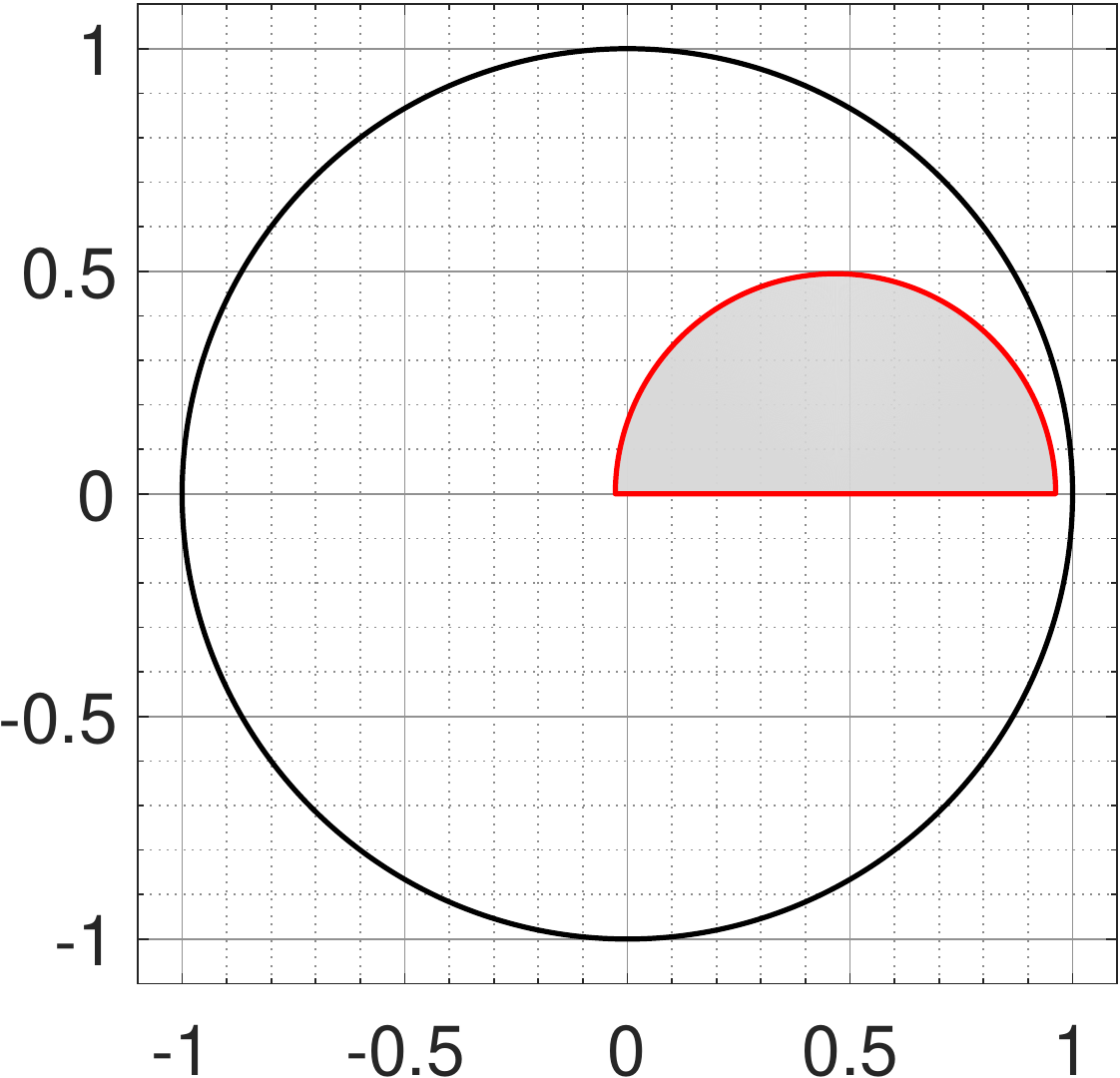}}
\hfill
}
\caption{The set $E$ in~\eqref{eq:halfdisk} for $t=2$, $x=0.5$ (left) and $t=2$, $x=0.75$ (right).}
\label{fig:halfdisk}
\end{figure}

\begin{table}[hbt]
\caption{The values of the capacity $\capa(\B^2,E)$ and the values of the bounds in~\eqref{boundFromcapgamma}, \eqref{boundFromKuhnau}, and~\eqref{boundFromfwg71} for several values of $t$.}
\label{tab:halfdisk}%
\begin{tabular}{|c|c|c|c|c|c|} 
 \hline
 $t$ & L. bound~\eqref{boundFromcapgamma} & L. bound~\eqref{boundFromKuhnau} & \multicolumn{2}{c|}{$\capa(\B^2,E)$} & U. bound~\eqref{boundFromfwg71}\\ 
 \cline{4-5}
     &   & & $x=0.5$ & $x=0.75$ & \\ 
		\hline
0.5& 2.992668693658 & 3.420421711458 & 3.786736098104 & 3.786736098103 & 3.920276955667 \\
 \hline
 1 & 4.305689987396 & 5.387651654447 & 6.295457868908 & 6.295457868897 & 6.593459117182 \\
 \hline
 2 & 6.857936184536 & 11.56528464073 & 15.04612692249 & 15.04612692228 & 15.80292688543 \\ 
 \hline
 3 & 9.404520113864 & 25.62055328755 & 36.30939061754 & 36.30939061578 & 37.64629373665 \\ 
 \hline
\end{tabular}
\end{table}
	
As Table \ref{tab:halfdisk} indicates, the upper bound for the capacity is more accurate
than the lower bound in Theorem \ref{KuhnauThm} which is sharp when the set $E$ is a segment.
Thus it is natural to look for a better lower bound for a massive set such as the half disk.
The next lemma provides such a bound (see Table~\ref{tab:LB-new} and Figure~\ref{fig:newLow}).

\begin{lemma}\label{NewLowBd} Let $E$ be the set in \eqref{eq:halfdisk} for $t>0.$ Then
\begin{equation}\label{eq:LB-new}
\capa(\B^2,E) \ge \pi/\log(1/{\rm th}(t/2)) + \pi/\mu({\rm th} t) \,.
\end{equation}
\end{lemma}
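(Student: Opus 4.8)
The plan is to normalize the condenser by a conformal map and then split the capacitary curve family into two pieces lying on opposite sides of the real axis, adding up their moduli. Since the parameter $x$ in \eqref{eq:halfdisk} is real, the M\"obius automorphism $T(z)=(z-x)/(1-xz)$ of $\B^2$ has real coefficients, is a hyperbolic isometry, and satisfies $T'(x)=1/(1-x^2)>0$; hence it fixes the real diameter of $\B^2$ and, being orientation preserving, maps $\B^2\cap\{\Im z>0\}$ onto itself. Consequently $T(\overline{B}_\rho(x,t))=\overline{B}_\rho(0,t)$, and by \eqref{hypDat0} the image $T(E)$ is the closed Euclidean half-disk $E_0=\{z:|z|\le{\rm th}(t/2),\ \Im z\ge 0\}$. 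By the conformal invariance of the conformal capacity, $\capa(\B^2,E)=\capa(\B^2,E_0)=\M(\Gamma)$ with $\Gamma=\Delta(E_0,S^1;\B^2)$, so it suffices to bound $\M(\Gamma)$ from below. Write $r_0={\rm th}(t/2)$.

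Next I would exhibit two subfamilies of $\Gamma$. Let $\Gamma_1$ be the family of curves in the closed upper half-annulus $\{r_0\le|z|\le 1,\ \Im z\ge 0\}$ joining the inner semicircular arc $\{|z|=r_0,\ \Im z\ge 0\}$, which is the curved part of $\partial E_0$, to the outer arc $S^1\cap\{\Im z\ge 0\}$; the principal branch of $\log$ maps this half-annulus conformally onto the rectangle $[\log r_0,0]\times[0,\pi]$, carrying the two circular arcs to the two vertical sides, so $\M(\Gamma_1)=\pi/(-\log r_0)=\pi/\log(1/{\rm th}(t/2))$. Let $I=[-r_0,r_0]$ be the flat part of $\partial E_0$, and let $\Gamma_2$ be the family of curves in the lower half-disk $\B^2\cap\{\Im z<0\}$ joining $I$ to $S^1\cap\{\Im z\le 0\}$. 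From \eqref{myrho} one gets $\rho_{\B^2}(-r_0,r_0)=2t$, and hence, after reflection in $\R$, $\Gamma_2$ is exactly the curve family $\tilde\Delta$ analyzed in the proof of Lemma~\ref{Lem:LB-Tri-h} with $s^{m/2}$ replaced by $r_0$; this gives $\M(\Gamma_2)=\tfrac12\gamma_2(1/{\rm th} t)=\pi/\mu({\rm th} t)$ by \eqref{capGro}.

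Finally I would combine these. Every curve of $\Gamma_1\cup\Gamma_2$ joins $E_0$ to $S^1$ in the closure of $\B^2$, so $\Gamma_1$ and $\Gamma_2$ are subfamilies of $\Gamma$; moreover $\Gamma_1$ is contained in the Borel set $\{\Im z\ge 0\}$, while $\Gamma_2$, after discarding the family of those of its curves whose intersection with $\R$ has positive length (a family of modulus zero), is contained in the disjoint Borel set $\{\Im z<0\}$. Thus $\Gamma_1$ and $\Gamma_2$ are separate subfamilies of $\Gamma$, and superadditivity of the modulus over separate subfamilies — the same tool used for Lemma~\ref{lem_Pjcapbound}, see \cite[Thm~6.7]{v1} — yields
\[
\capa(\B^2,E)=\M(\Gamma)\ \ge\ \M(\Gamma_1)+\M(\Gamma_2)=\frac{\pi}{\log(1/{\rm th}(t/2))}+\frac{\pi}{\mu({\rm th} t)},
\]
which is \eqref{eq:LB-new}. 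The substance of the argument is two known modulus evaluations — the rectangle for $\Gamma_1$ and Lemma~\ref{Lem:LB-Tri-h} for $\Gamma_2$ — and the one point that requires care is verifying that $\Gamma_1$ and $\Gamma_2$ are genuinely separate, i.e. controlling curves of $\Gamma_2$ that could run along the real axis, which is exactly where the standard fact that curves meeting a fixed line in a set of positive length form an exceptional family enters.
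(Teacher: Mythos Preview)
Your proof is correct and follows essentially the same route as the paper's: reduce to $x=0$, split $\Gamma=\Delta(E_0,S^1;\B^2)$ into an upper-half and a lower-half subfamily, and invoke superadditivity of the modulus over separate families. The only cosmetic difference is that you compute $\M(\Gamma_1)$ via the logarithm map to a rectangle and $\M(\Gamma_2)$ by recycling the computation in Lemma~\ref{Lem:LB-Tri-h}, whereas the paper obtains both values in one stroke by citing a symmetry principle for the modulus \cite[p.~127, Thm~4.3.3]{HKV}.
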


\begin{proof} Without loss of generality, we may assume that $x=0\,.$ Then by \eqref{hypdisk},
$B_{\rho}(0,t)= B^2(0,{\rm th}(t/2) )\,.$ Let ${\mathbb H}^2= \{z \in \R^2: {\rm Im} z >0 \}$
and
\[
\Gamma= \Delta(E, \partial \B^2; \B^2), \quad \Gamma_1=\{\gamma \in \Gamma: \gamma \subset {\mathbb H}^2 \}, \quad  \Gamma_2=\{\gamma \in \Gamma: \gamma \subset  \B^2 \setminus {\overline {\mathbb H}}^2 \}.
\]
Then $\Gamma_1$ and $\Gamma_2$ are separate subfamilies of $\Gamma$ and hence by
a symmetry property of the modulus \cite[p. 127, Thm 4.3.3]{HKV}
\[
\capa(\B^2, E)\ge \M(\Gamma_1)+\M(\Gamma_2)= \pi/\log(1/{\rm th}(t/2)) + \pi/\mu({\rm th} t) ,
\]
and therefore the lemma is proved.
\end{proof}

\begin{table}[H]
 \caption{The lower bound~\eqref{eq:LB-new} is better than the lower bounds in~\eqref{boundFromcapgamma} and~\eqref{boundFromKuhnau}.}\label{tab:LB-new}  
   \begin{tabular}{|c|c|c|} 
 \hline
 $t$ & L. bound~\eqref{eq:LB-new} & $\capa(\B^2,E)$\\ 
 \hline
0.5&  3.72943616629721  &3.78673609810401 \\  \hline
 1 &  6.22259852750174  &6.29545786890825 \\  \hline
 2 &  14.9644594228477  &15.0461269224867 \\  \hline
 3 &  36.226461975872   &36.3093906175404 \\  \hline
\end{tabular}
\end{table}

\begin{figure}[H] %
\centerline{
\hfill
\scalebox{0.6}{\includegraphics[trim=0cm 0cm 0cm 0cm,clip]{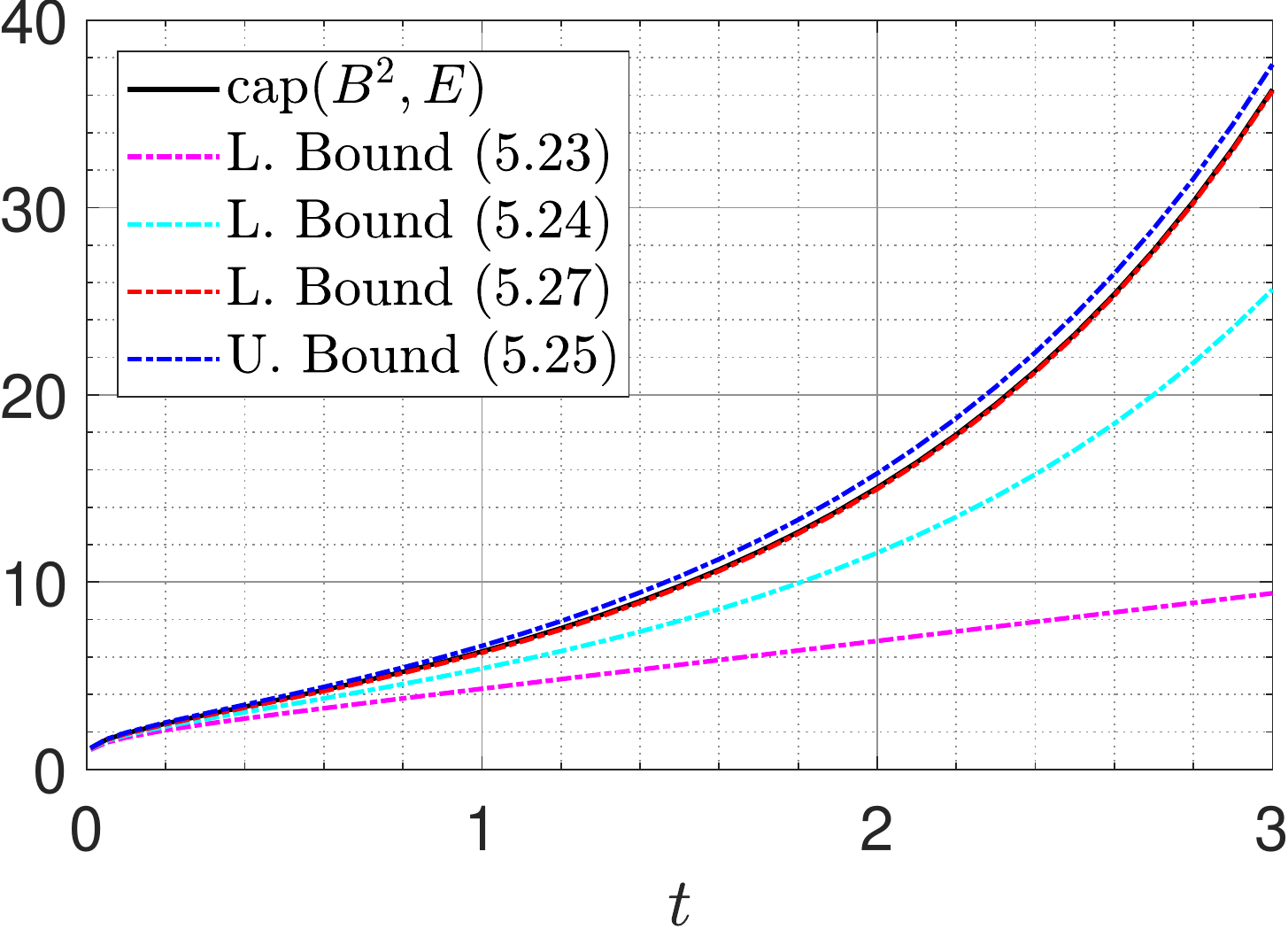}}
\hfill
\scalebox{0.6}{\includegraphics[trim=0cm 0cm 0cm 0cm,clip]{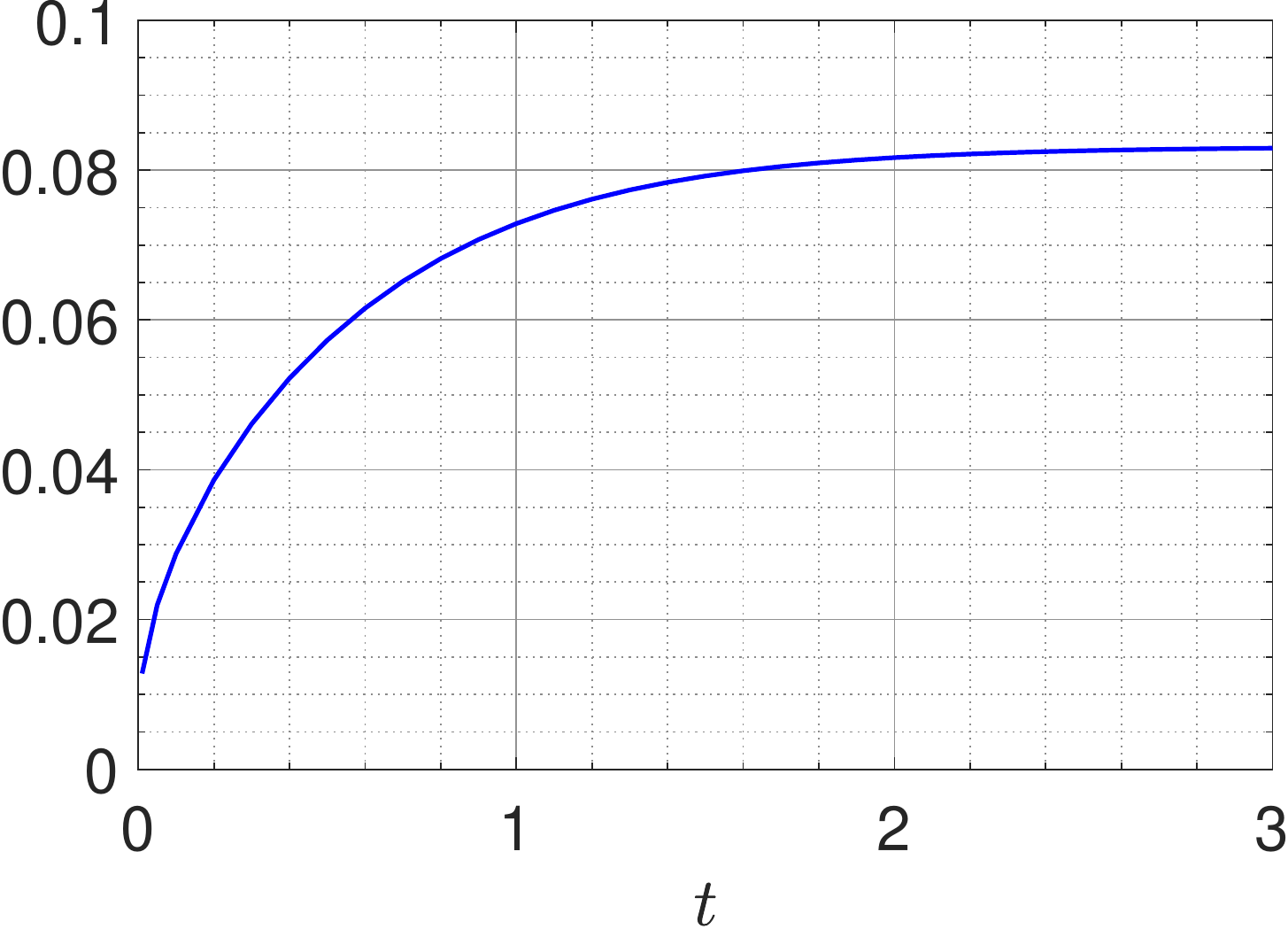}}
\hfill
}
\caption{The capacity $\capa(\B^2,E)$, the upper bound~\eqref{boundFromfwg71}, and the three lower bounds \eqref{boundFromcapgamma}, \eqref{boundFromKuhnau}, \eqref{eq:LB-new} (left) and the difference between $\capa(\B^2,E)$ and the best lower bound~\eqref{eq:LB-new} (right).}
\label{fig:newLow}
\end{figure}

\end{nonsec}

\begin{nonsec}{\bf Capacity of a rectangle with a segment.}
We consider here the capacity
\begin{equation}\label{eq:rec-seg}
\capa(F,E),\quad F=(-a,a)\times(0,b), \quad E=[\i c, \i d],
\end{equation}
where $a,b,c,d$ are positive real numbers such that $0<c<d<b$ (see Figure~\ref{fig:rec-slit} (left)).

\begin{figure}[H] %
\centerline{
\scalebox{0.55}{\includegraphics[trim=0cm 0cm 0cm 0cm,clip]{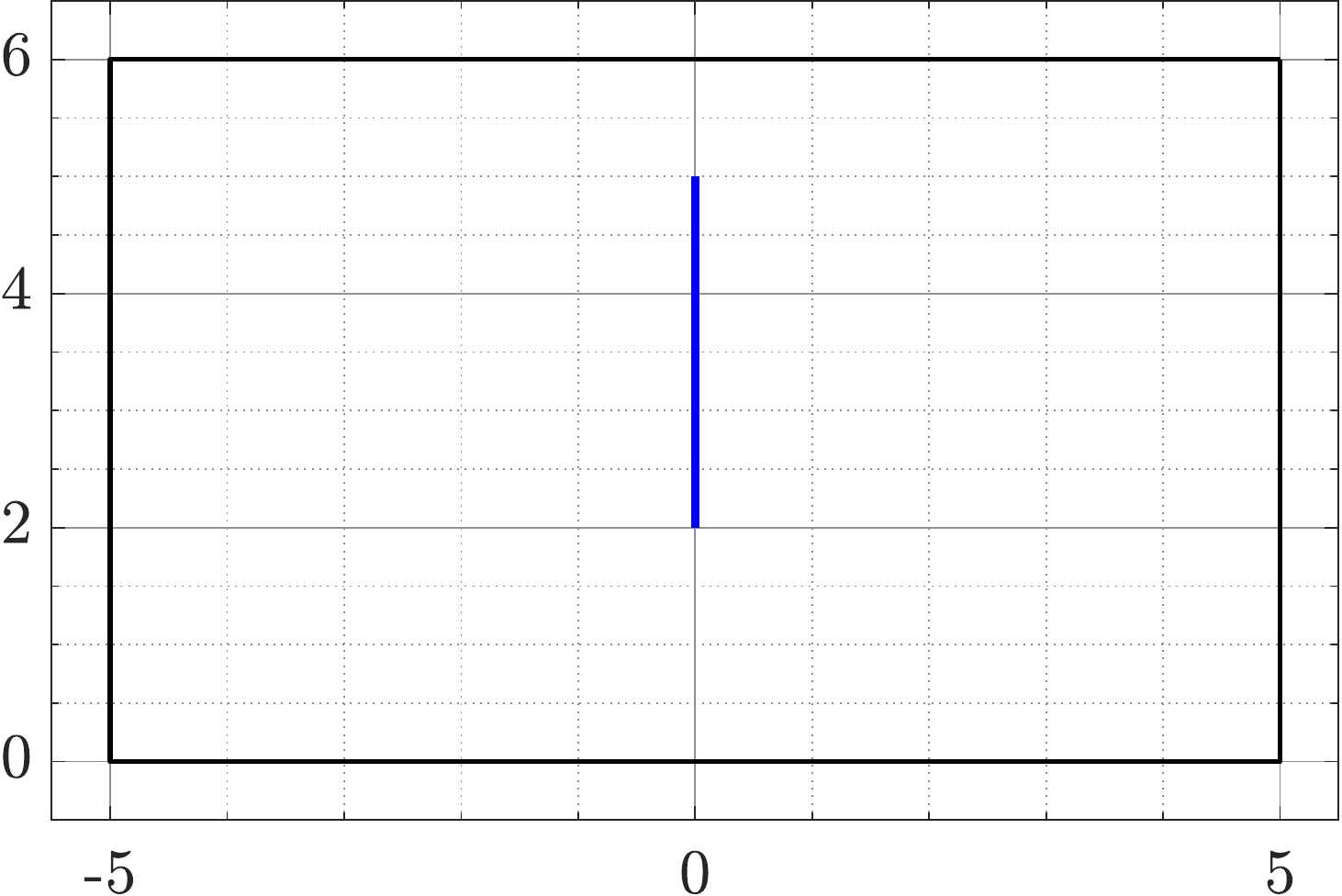}}
\hfill
\scalebox{0.55}{\includegraphics[trim=0cm 0cm 0cm 0cm,clip]{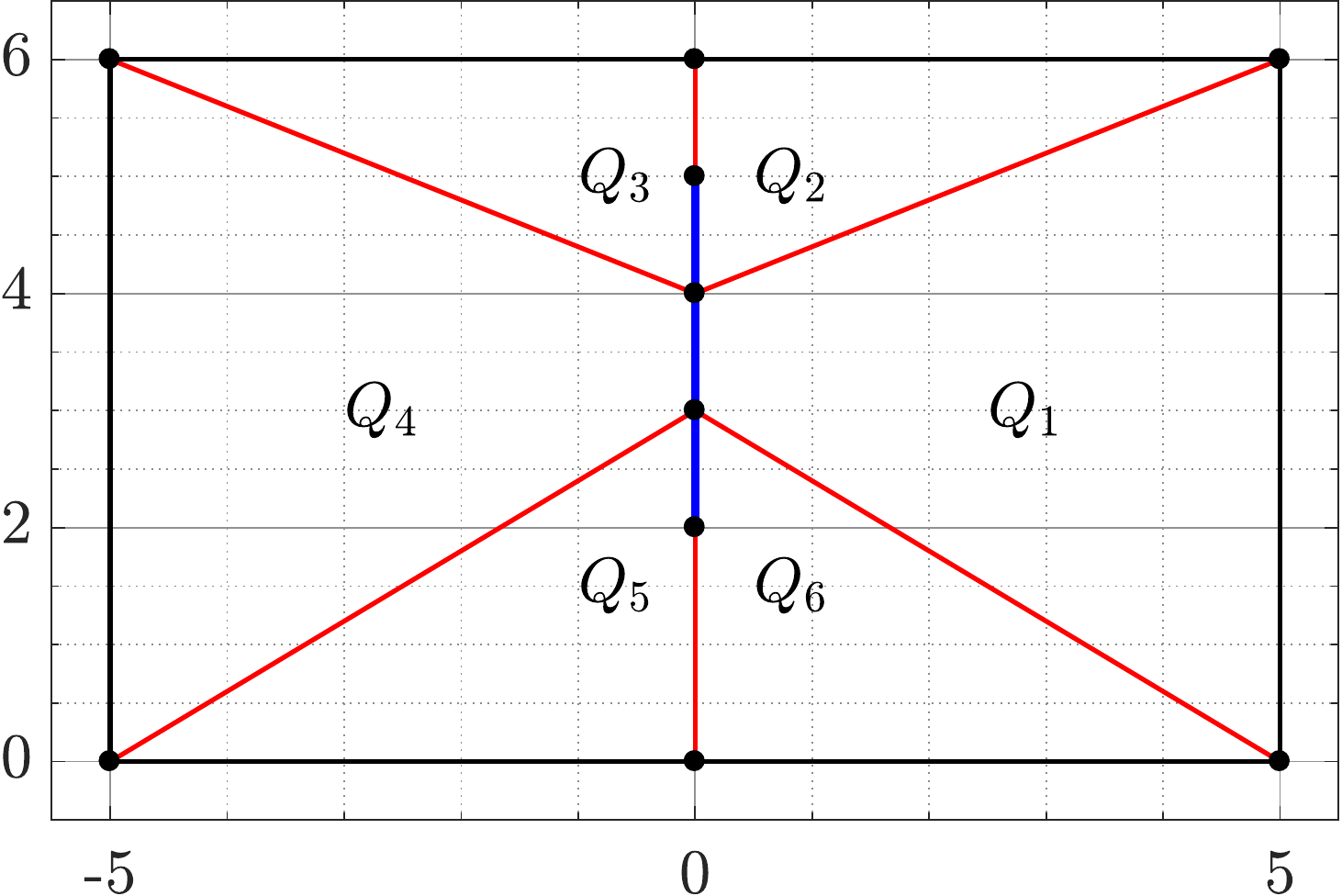}}
}
\caption{The rectangle with a segment condenser for $a=5$, $b=6$, $c=2$ and $d=5$.}
\label{fig:rec-slit}
\end{figure}

The exact value of the capacity $\capa(F,E)$ can be obtained with the help of conformal mappings. 
Let
\[
\tau=\frac{\i b}{a}, \quad q=e^{\pi\tau\i}=e^{-\pi b/a}>0, \quad k=\left(\frac{\theta_2(0,q)}{\theta_3(0,q)}\right)^2, \quad \alpha=\frac{a}{\K(k)},
\] 
where $\K(\cdot)$ is defined by~\eqref{capGro} and
\[
\theta_2(0,q)=2 \sum_{n=0}^\infty q ^{(n+1/2)^2}\,, \quad
\theta_3(0,q)=1 + 2 \sum_{n=1}^\infty q ^{n^2}\,\,.
\]
Then, it follows from~\cite[pp.~172-173]{kob} that the mapping function 
\[
z \mapsto \sn\,\frac{z}{\alpha}=\sn\left(\frac{z}{\alpha};k\right)
\]
maps the domain $F \backslash E$ onto the domain $\HH\backslash [\i \hat c, \i \hat d]$ where $\HH=\{w\;:\;\Im w>0\}$ is the upper half-plane, $\i \hat c=\sn(\i c/\alpha)$, $\i \hat d=\sn(\i d/\alpha)$ and $\hat c$, $\hat d$ are real numbers with $\hat d>\hat c>0$. Here, $\sn$ is the Jacobian elliptic sine function which is the inverse of the function~\cite[p.~218]{ky}
\[
\sn^{-1}\,w=\sn^{-1}(w;k)=\int_0^w\frac{dt}{\sqrt{(1-t^2)(1-k^2t^2)}}.
\]
The function $\sn^{-1}(w;k)$ is a Schwarz-Christoffel transformation mapping $\HH$ conformally onto the rectangle with corners
$\pm \K(k), \pm \K(k) + i \K(\sqrt{1-k^2})$ such that the points $-1/k, -1, 1, 1/k$ are mapped
to the corners.
Then, the M\"obius transformation 
\[
z \mapsto \frac{z-\i\hat c}{z+\i\hat c}
\]
maps the domain $\HH\backslash [\i \hat c, \i \hat d]$ onto the domain $\B^2 \backslash [0,(\hat d-\hat c)/(\hat d+\hat c)]$. Thus, by the invariance of conformal capacity,
\[
\capa(F,E) = \capa(\B^2,[0,(\hat d-\hat c)/(\hat d+\hat c)])
=\frac{2\pi}{\mu((\hat d-\hat c)/(\hat d+\hat c))}.
\]
The values of the capacity $\capa(F,E)$ for several values of $a,b,c,d$ are presented in Table~\ref{tab:Rec-slit}. The ring domain $F \backslash E$ can be regarded as a ring domain of the type considered in Subsection~\ref{sec:trapezium} with $m=6$ (see Figure~\ref{fig:rec-slit} (right)). Thus, a lower bound for the capacity $\capa(F,E)$ can be obtained from the inequality~\eqref{eq:cap-Qj} where the quadrilateral $Q_j$ are defined as in Subsection~\ref{sec:trapezium} (the vertices of the quadrilaterals are the black dots in Figure~\ref{fig:rec-slit} (right)). By symmetry, we have $\M(Q_1)=\M(Q_4)$,  $\M(Q_2)=\M(Q_3)$, and $\M(Q_5)=\M(Q_6)$. Using linear transformation, the quadrilateral $Q_1$ can be mapped to a quadrilateral in the upper half-plane such that the vertex $a$ is mapped to $0$ and the vertex $a+b\i$ is mapped to $1$. Then the value of $\M(Q_1)$ will be computed using the MATLAB function \verb|QM|. Similarly, the quadrilaterals $Q_2$ and $Q_5$ can be mapped onto quadrilaterals of the form described in Theorem~\ref{thm_quadriT} and then the MATLAB function \verb|QMt| is used to compute the values of $\M(Q_2)$ and $\M(Q_5)$. The values of the lower bound~\eqref{eq:LB-new} are given in Table~\ref{tab:Rec-slit}. 

\begin{table}[H]
 \caption{The values of the capacity $\capa(F,E)$ where $E$ and $F$ are given by~\eqref{eq:rec-seg} for several values of $a,b,c,d$.}\label{tab:Rec-slit}  
   \begin{tabular}{|l|l|l|l|l|l|} 
 \hline
 $a$ & $b$  & $c$  & $d$  & $\capa(F,E)$  & Lower bound~\eqref{eq:LB-new} \\ 
 \hline
 5  & 6   & 2    & 5    &4.17125447391152  & 4.03911136361855 \\  \hline
 5  & 6   & 1.5  & 4.5  &4.03909687993575  & 3.93625993555218 \\  \hline
 5  & 6   & 0.1  & 5.9  &11.0951405324743  & 10.9139325276210 \\  \hline
 5  & 6   & 2.95 & 3.05 &1.25467160179695  & 1.23114070623905 \\  \hline
 5  & 2   & 0.5  & 1.5  &4.0000011018024   & 3.64807484961254 \\  \hline
 10 & 1   & 0.25 & 0.75 &4.00013977481468  & 3.72908545066779 \\  \hline
 1  & 10  & 2.5  & 7.5  &11.7660734963185  & 5.84657534082154 \\  \hline
 1  & 4   & 1    & 3    &5.87687212650123  & 4.46146150262299 \\  \hline
\end{tabular}
\end{table}

\end{nonsec}

\section{Epilogue}\label{sec:epi}
Theorems \ref{KuhnauThm} and \ref{fwg71} are examples of results which, in terms of the perimeter of the set $E\subset\B^2$, quantify how the capacity $\capa(\B^2,E)$ depends on the ``size'' or the ``shape'' of the set $E$. We now discuss some known results of this type and thereby point out ideas for further studies.

For a compact set $E\subset\R^n$, we consider its \emph{tubular neighbourhood}, defined as
\begin{align}
E(t)=\{z\in\R^n\,|\,d(z,E)<t\}=\bigcup_{w\in E}B^2(w,t),    
\end{align}
and study  the function
\begin{align*}
c(E,t)\equiv\capa(E(t),E).   
\end{align*}
It is a well-known fact
that for every compact set $E \subset \R^n,$ the boundary of its tubular neighborhood
has a finite  $(n-1)$-dimensional measure. This fact was refined and further studied 
under various structure conditions on the set $E$ in \cite{KLV}.
Note that the function $c(E,t)$ here is  decreasing with respect to $t$ and its behaviour is closely related to the size of the set $E$ when $t\to0^+$. As we will see below, this dependency is mutual: if $E$ is ``thick'' or ``big'', there is a lower bound for $c(E,t)$ tending to $\infty$, whereas, if the function $c(E,t)$ converges to $\infty$ slowly, the set $E$ is small.

We say that the compact set $E$ is of \emph{capacity zero} if $c(E,t_0)=0$ for some $t_0>0$ and denote this by $\capa E=0\,,$ in the opposite case $\capa E>0\,$.

\begin{theorem} \emph{(J. V\"ais\"al\"a \cite{v2})}
If $\lim_{t\to0^+}c(E,t)<\infty$, then $E$ is of capacity zero.
\end{theorem}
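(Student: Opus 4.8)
The plan is to argue by contrapositive: assume $\capa E > 0$, i.e.\ $c(E,t) > 0$ for every $t > 0$, and show that $c(E,t) \to \infty$ as $t \to 0^+$. The geometric idea is that if $E$ has positive capacity, then $E$ cannot be too small; in particular it must contain (or can be probed by) a nondegenerate continuum, and we can then exhaust the thin annular region $E(t) \setminus E$ by many essentially disjoint ``cells,'' each of which contributes a fixed amount to the modulus of the connecting curve family.

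First I would recall that $c(E,t) = \capa(E(t), E) = \M(\Delta(E, \partial E(t); E(t)))$, so the task is a lower bound for this modulus that blows up as $t \to 0^+$. If $\capa E > 0$, then $E$ has positive $n$-capacity in the classical sense, hence $E$ is not of $n$-capacity zero and in particular cannot be a single point or a set of vanishing capacity; a standard fact (see the potential-theoretic references \cite{GMP,hkm,m} cited in the paper) is that then $E$ contains a nondegenerate continuum, or at least that $\M(\Delta(E,F;G)) \geq \delta > 0$ uniformly for the relevant configurations. Fix such a continuum $F_0 \subset E$ with $\diam(F_0) = \rho_0 > 0$. For small $t$, the set $F_0(t) = \{z : \dist(z, F_0) < t\}$ is contained in $E(t)$, and by monotonicity of capacity $c(E,t) \geq \capa(F_0(t), F_0)$, so it suffices to treat the case $E = F_0$ a continuum.

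Next I would chop $F_0$ into roughly $N(t) \asymp \rho_0 / t$ pieces $F_1, \dots, F_{N(t)}$, each of diameter comparable to $t$ but still nondegenerate (e.g.\ consecutive overlapping subarcs if $F_0$ is rectifiable, or pieces obtained from a covering/separation argument in the general case), arranged so that the tube-pieces $F_j(t/2) \setminus F_j$ sit inside pairwise disjoint balls. Each curve family $\Delta(F_j, \partial F_j(t/2); F_j(t/2))$ is, after the scaling $z \mapsto z/t$, a condenser of the form ``continuum of unit diameter inside its tubular neighborhood of fixed radius,'' whose modulus is bounded below by an absolute constant $c_0 = c_0(n) > 0$ independent of $t$ by scale invariance of the $n$-modulus. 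These families are separate subfamilies of $\Delta(E, \partial E(t); E(t))$ (the tube-pieces are disjoint and each is a subset of $E(t) \setminus E$), so by the superadditivity of the modulus over separate families \cite[Thm 6.7]{v1} (the same tool used in Lemma \ref{lem_Pjcapbound}) we get
\[
c(E,t) \;=\; \M(\Delta(E,\partial E(t);E(t))) \;\geq\; \sum_{j=1}^{N(t)} c_0 \;=\; c_0 N(t) \;\asymp\; \frac{c_0 \rho_0}{t} \;\xrightarrow[t\to 0^+]{} \infty.
\]
Hence $\lim_{t\to 0^+} c(E,t) = \infty$, contradicting the hypothesis, which proves the theorem.

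The main obstacle I anticipate is making the ``chop $F_0$ into $\asymp \rho_0/t$ essentially disjoint nondegenerate pieces whose tubes are disjoint'' step rigorous for an arbitrary continuum (not just a rectifiable arc): one needs a covering/separation lemma guaranteeing that a continuum of diameter $\rho_0$ meets $\gtrsim \rho_0/t$ balls of radius $t$ that can be chosen pairwise far apart, together with the uniform lower bound $\M \geq c_0$ for each rescaled cell — this last point uses that a continuum of diameter $\geq 1$ inside a fixed neighborhood has capacity bounded below, a quantitative nondegeneracy fact of the type in Lemma \ref{vu81}. Everything else (monotonicity, the superadditivity over separate families, and scale invariance of the $n$-modulus) is standard and already available in the references cited above.
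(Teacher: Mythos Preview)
The paper does not actually prove this theorem; it only states it and records that ``the theorem above follows from the results in \cite{v2}.'' So there is no in-paper proof to compare against, and the question is simply whether your argument is correct.

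There is a genuine and fatal gap at your step~2. You assert that if $\capa E>0$ then ``$E$ contains a nondegenerate continuum,'' and you then reduce the whole problem to that continuum. This is false in general: there exist totally disconnected compact sets of positive conformal capacity (for instance Cantor-type sets of positive logarithmic capacity in the plane, or their higher-dimensional analogues). Such an $E$ contains \emph{no} nondegenerate continuum $F_0$, so your reduction collapses. The ``or at least $\M(\Delta(E,F;G))\ge\delta>0$ uniformly'' alternative you mention is not a substitute; it is precisely the kind of uniform quantitative statement that must be \emph{proved}, and your chopping-into-arcs mechanism gives no handle on it once $E$ is totally disconnected. V\"ais\"al\"a's argument in \cite{v2} is measure-theoretic and does not proceed through a continuum inside $E$.

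A secondary issue: even granting the (false) continuum claim, your monotonicity reduction $c(E,t)\ge \capa(F_0(t),F_0)$ is not justified as written. From $F_0\subset E$ one gets $\capa(E(t),E)\ge \capa(E(t),F_0)$, but then passing from the domain $E(t)$ down to the smaller domain $F_0(t)$ \emph{increases} the capacity of the pair with inner set $F_0$, so the chain of inequalities points the wrong way. One can repair this by working directly with the separate subfamilies inside $E(t)\setminus E$ rather than first shrinking to $F_0(t)$, but this does not rescue the main gap above.
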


The theorem above follows from the results in \cite{v2}. It should be observed that if
$E$ is of capacity zero, then $c(E,t)=0$ for all $t>0.$ In his PhD thesis \cite{hei}, V. Heikkala proved the next result and attributed the idea of its proof to J. M\'aly.

\begin{theorem} \emph{(V. Heikkala \cite[Thm 4.6]{hei})}
Let $h:(0,\infty)\to(0,\infty)$ be a decreasing homeomorphism, which satisfies $h(t)\to\infty$ as $t\to0^+$. Then there exists a compact set $E\subset\R^n$ with $\capa E>0$ satisfying $c(E,t)<h(t)$ for all $t\in(0,1)$.
\end{theorem}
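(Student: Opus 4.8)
The plan is to take $E$ to be a generalized symmetric Cantor set built on a segment, with contraction ratios $\lambda_1,\lambda_2,\dots\in(0,1/3)$ to be fixed later in terms of $h$: the $m$-th generation consists of $2^m$ intervals of common length $\ell_m=\prod_{j\le m}\lambda_j$ (take $\ell_0=1$), $C=\bigcap_m(\text{gen }m)$, and I single out the \emph{transitional scales} $t_m=\ell_m/2$. First I would describe $C(t_m)$: since every ratio is $<1/3$, the $2^m$ generation-$m$ intervals have pairwise disjoint $t_m$-neighbourhoods, while inside each such interval all remaining gaps are $<2t_m$, so the part of $C$ there is swallowed into one connected ``blob''. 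As the modulus is additive over curve families in disjoint open sets, and using the scaling- and conformal-invariance of $\capa$, this gives the exact identity
\[
c(C,t_m)=\capa\bigl(C(t_m),C\bigr)=2^m\,\capa\bigl(C_m(1/2),C_m\bigr)=2^m\,c(C_m,1/2),
\]
where $C_m$ is the Cantor set with ratios $\lambda_{m+1},\lambda_{m+2},\dots$ (a rescaled copy of the trace of $C$ on a generation-$m$ interval).

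The crucial input is an upper bound for $c(C_m,1/2)$. When $n\ge3$ it suffices to note that the two extreme clusters of $C_m$ lie in balls of radius $\lambda_{m+1}$ about the endpoints of $[0,1]$, replace each cluster by that ball (capacity is monotone in the compact set), split the curve family into the two corresponding subfamilies, and apply Lemma~\ref{cgqm_5.14}(1); this gives $c(C_m,1/2)\le 2\,\omega_{n-1}\bigl(\log\tfrac1{2\lambda_{m+1}}\bigr)^{1-n}$. In the planar case this is far too crude (it overshoots the truth by a factor comparable to the number of generations), and instead I would use potential theory: $c(C_m,1/2)=\M(\Delta(C_m,\partial C_m(1/2);C_m(1/2)))$ is comparable to $1/\log\bigl(1/\operatorname{cap}_{\mathrm{log}}C_m\bigr)$, $\operatorname{cap}_{\mathrm{log}}$ the logarithmic capacity, and a logarithmic-energy estimate for the uniform equilibrium measure of a generalized Cantor set gives $\log(1/\operatorname{cap}_{\mathrm{log}}C_m)\asymp 2^m S_m$ with $S_m:=\sum_{j>m}2^{-j}\log(1/\lambda_j)$. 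Combined with the displayed identity this yields the clean relation $c(C,t_m)\asymp 1/S_m$ for $n=2$.

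Next I would reduce the desired inequality to the scales $t_m$: both $c(C,\cdot)$ and $h$ are decreasing, $\bigcup_{m\ge1}[t_m,t_{m-1}]=(0,t_0]$, and $h(t_{m-1})\ge h(3^{-(m-1)})$ since $\ell_{m-1}\le3^{-(m-1)}$, so it is enough to arrange $c(C,t_m)<h(3^{-(m-1)})$ for all $m\ge1$ (the range $[t_0,1)$ being covered by the analogous estimate at $t_0$, using $\ell_0=1$, or via $\capa(\Rn,E)=0$). For $n=2$ this becomes the demand that the tails $S_m$ of $\sum_j2^{-j}\log(1/\lambda_j)$ satisfy $S_m>C_0/h(3^{-(m-1)})$, whereas $\capa E>0$ requires, by the \emph{same} logarithmic-capacity criterion, that this series \emph{converge}. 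These are compatible: writing $b_j=2^{-j}\log(1/\lambda_j)$, it suffices to pick a summable nonnegative $(b_j)$ whose tails $S_m=\sum_{j>m}b_j$ go to $0$ no faster than the given null sequence $C_0/h(3^{-(m-1)})$ — e.g. $S_m=2C_0/h(3^{-(m-1)})+2^{-m+1}\log3$, so that $b_j=S_{j-1}-S_j\ge2^{-j+1}\log3$, which also forces $\lambda_j<1/3$. This determines $\lambda_j=\exp(-2^jb_j)$, hence $E=C$. (For $n\ge3$ one just takes $b_j$ equal to, or slightly above, the lower bound $(\omega_{n-1}/(h(t_{j-1})2^{j(n-2)}))^{1/(n-1)}$ forced by the crude estimate; the geometric factor $2^{-j(n-2)/(n-1)}$ makes $\sum b_j<\infty$ automatic, no matter how slowly $h$ grows.)

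The step I expect to be the main obstacle is the sharp capacity estimate for the planar Cantor blob — $c(C_m,1/2)\asymp1/\log(1/\operatorname{cap}_{\mathrm{log}}C_m)$ together with $\log(1/\operatorname{cap}_{\mathrm{log}}C_m)\asymp2^mS_m$ — and within it the \emph{upper} bound: since $C_m$ is disconnected, the clean comparison of a condenser capacity with a logarithmic capacity available for continua must be replaced by a direct bound on the Dirichlet integral of a suitably truncated equilibrium potential of $C_m$, controlled by the logarithmic energy of its equilibrium measure. Without this refinement the ``one bounding ball per cluster'' bound loses exactly the factor needed to beat slowly growing $h$. Once that estimate is in hand, the neighbourhood geometry at the scales $t_m$, the additivity/subadditivity of the modulus, the monotonicity reduction, and the recursive choice of $(\lambda_j)$ are all routine.
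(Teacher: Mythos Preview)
The paper does not prove this theorem: it is merely stated, with a citation to Heikkala's thesis \cite{hei} and the remark that the idea is due to J.~M\'aly. There is therefore no proof in the paper to compare your argument against.

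That said, your strategy---a generalized linear Cantor set whose contraction ratios $\lambda_j$ are tuned to the given $h$---is the natural construction for such a result and is almost certainly close in spirit to the Heikkala--M\'aly argument. The structural steps are correct: disjointness of the $t_m$-blobs when each $\lambda_j<1/3$, additivity of the modulus over the $2^m$ components, the scaling identity $c(C,t_m)=2^m\,c(C_m,1/2)$, the reduction to the discrete scales $t_m$ by monotonicity of $c(C,\cdot)$ and $h$, and the backward determination of $\lambda_j$ from a prescribed decreasing tail sequence $S_m$ are all sound. Your choice $S_m=2C_0/h(3^{-(m-1)})+2^{-m+1}\log 3$ indeed forces $b_j\ge 2^{-j+1}\log 3$, hence $\lambda_j\le 1/9$, and gives $S_0<\infty$.

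Two places deserve care. First, the step you yourself flag---the upper bound $c(C_m,1/2)\lesssim 1/\log(1/\operatorname{cap}_{\log}C_m)$ for the \emph{disconnected} compact $C_m$---is genuine work; the clean continuum comparison does not apply, but a truncated equilibrium-potential competitor (or, equivalently, the standard telescoping test function that drops by $a_k$ across each level-$k$ ring, with $\sum a_k=1$ and Dirichlet energy $\sum 2^k a_k^2/\log(1/\lambda_{m+k})$ minimised at $a_k\propto 2^{-k}\log(1/\lambda_{m+k})$) gives exactly the bound you need. Second, for $n\ge3$ you implicitly use that the same series condition $\sum_j 2^{-j}\log(1/\lambda_j)<\infty$ guarantees positive conformal $n$-capacity of a linear Cantor set. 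This is true but is less standard than the planar case and should be backed by a reference or by a direct lower bound on $c(C,t_0)$ from an explicit curve family; without it the conclusion $\capa E>0$ is not secured in higher dimensions.
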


Furthermore, Heikkala studied the function $c(E,t)$ in more detail under various measure theoretic thickness conditions. For instance, he proved a lower bound for $c(E,t)$ if $E$ is uniformly perfect and an upper bound if $E$ satisfies the so called Ahlfors condition. For these results, see~\cite{hei}. Heikkala's results were refined, extended and generalized by J. Lehrb\"ack \cite{Leh} to the context of metric measure spaces and more general capacities. 

The aforementioned results \cite{hei,Leh} dealing with uniformly perfect sets or sets satisfying the Ahlfors condition depend on the pertinent structure parameters of the set $E$ and hence so do the obtained growth estimates for $c(E,t)$. There are also results of other type where bounds such as
\begin{align}
c(E,t)\leq a_1t^{-n}c(E,1),\quad t\in(0,1),   
\end{align}
were proved for a compact set $E\subset\B^n$ with the constant $a_1$ only depending on $n$. See \cite[Lemma 8.22]{HKV}, \cite[Lemma 3.3, p. 60]{res}. The proof makes use of the Whitney extension theorem and standard gradient
estimates for mollifying functions, see also \cite{Leh} and \cite[Ch. 13]{m}. The point here is that the growth rate of $t^{-n}$ is independent of $E$ and the power  $-n$ is the best possible (independent of $E$) as shown
in \cite[p. 146]{HKV}.

We conclude  by discussing the possible use of the domain functional $d(E)\slash d(E,\partial G)$ in the estimation of the capacity $\capa(G,E)$ when $G\subset\R^2$ is a bounded simply connected domain. It follows from Lemma \ref{vu81} and \eqref{capvu81} that such a bound exists  if the set $E$ is connected.

The class of simply connected domains is too general for our purpose; it contains many potential theoretic counterexample domains such as ``rooms connected by narrow corridors'', which we would have to exclude. Thus we consider a suitable subclass of domains \cite[p. 84]{HKV}.

\begin{nonsec}{\bf $\varphi$-uniform domains.}
Let $\varphi:[0,\infty)\to[0,\infty)$ be an increasing homeomorphism. We say that a domain $G\subsetneq\R^n$ is $\varphi$-{\it uniform} if
\begin{align*}
k_G(x,y)\leq\varphi\left(\frac{|x-y|}{\min\{d_G(x),d_G(y)\}}\right)    
\end{align*}
holds for every $x,y\in G$.

Simple examples of $\varphi$-domains are convex domains, which satisfy the condition above with $\varphi(t)\equiv t$. More generally, suppose that there exists a constant $c\geq1$ such that, for all $x,y\in G$, there exists a curve $\gamma$ joining $x$ and $y$ so that $\ell(\gamma)\leq c|x-y|$ and $d_G(z)\geq(1\slash c)\min\{d_G(x),d_G(y)\}$  for all $z\in\gamma$. In this case, the domain $G$ is $\varphi$-uniform with $\varphi(t)\equiv c^2t$ \cite[2.19(2)]{vu81}.

Suppose now that $F$ is a compact set in a simply connected $\varphi$-uniform 
domain $G$. Then 
\begin{align*}
k_G(F)\leq\varphi\left(\frac{d(F)}{d(F,\partial G)}\right)     
\end{align*}
and, because clearly
\begin{align*}
r_{k-Jung}(F)\leq k_G(F),    
\end{align*}
the inequality~\eqref{capJung} yields 
\begin{align}
\capa(G,F)\leq\capa(G,B_k(\varphi(d(F)\slash d(F,\partial G)))).    
\end{align}
Next, recall that in a simply connected plane domain $G$ by \eqref{kvsrho}
\begin{align}
k_G(F)\leq 2\rho_G(F)    
\end{align}
and, finally, for a compact set $F$ in a $\varphi$-uniform simply connected planar domain $G$ we see by \eqref{hypdisk} that
\begin{align} \label{FinalRes}
\capa(G,F)\leq2\pi\slash\text{th}(U\slash2),\quad U=2\varphi(d(F)\slash d(F,\partial G)).    
\end{align}
\end{nonsec}

Further study of the connection between the domain functional 
$d(F)\slash d(F,\partial G) $ and $\capa(G,F)\,$ seems to be worthwhile. For instance,
sharp inequalities are unknown. 

\def\cprime{$'$} \def\cprime{$'$} \def\cprime{$'$}
\providecommand{\bysame}{\leavevmode\hbox to3em{\hrulefill}\thinspace}
\providecommand{\MR}{\relax\ifhmode\unskip\space\fi MR }
\providecommand{\MRhref}[2]{%
  \href{http://www.ams.org/mathscinet-getitem?mr=#1}{#2}
}
\providecommand{\href}[2]{#2}

\end{document}